\documentclass[10pt,reqno]{amsart}

\usepackage[margin=1in]{geometry}
\usepackage{amsmath,amssymb,amsthm,mathtools}
\usepackage{mathrsfs}
\usepackage{enumitem}
\usepackage{microtype}
\usepackage{graphicx}
\usepackage{float}
\usepackage{tikz}
\usetikzlibrary{arrows.meta,positioning}
\numberwithin{equation}{section}

\usepackage[dvipsnames]{xcolor}
\usepackage[colorlinks=true,linkcolor=blue,urlcolor=blue,citecolor=PineGreen]{hyperref}
\usepackage{sidecap}
\usepackage[normalem]{ulem}

\newtheorem{theorem}{Theorem}[section]
\newtheorem{proposition}{Proposition}[section]
\newtheorem{lemma}{Lemma}[section]
\newtheorem{corollary}{Corollary}[section]
\newtheorem{remark}{Remark}[section]
\theoremstyle{definition}

\newcommand{\R}{\mathbb R}
\newcommand{\dd}{\mathrm d}
\newcommand{\dx}{\mathrm{d}x}
\newcommand{\dpp}{\mathrm{d}p}
\newcommand{\dt}{\mathrm{d}t}
\newcommand{\ds}{\mathrm{d}s}
\newcommand{\dy}{\mathrm{d}y}
\newcommand{\dr}{\mathrm{d}r}

\newcommand{\be}{B^{\varepsilon}}
\newcommand{\kappah}{\kappa_{\hbar}}
\newcommand{\e}{\varepsilon}
\newcommand{\one}{\mathbf 1}

\newcommand{\calC}{\mathcal C}

\newcommand{\calE}{\mathcal E}
\newcommand{\calF}{\mathcal F}

\newcommand{\bq}{\begin{equation}}
\newcommand{\eq}{\end{equation}}
\newcommand{\lt}{\left}
\newcommand{\rt}{\right}
\newcommand{\pa}{\partial}

\usepackage[colorinlistoftodos]{todonotes}

\usepackage{cancel}

\title{Global mild solutions and the semiclassical limit for the Fermi--Dirac BGK model}

\author[Choi]{Young-Pil Choi}
\address[Young-Pil Choi]{\newline Department of Mathematics\newline
Yonsei University, 50 Yonsei-Ro, Seodaemun-Gu, Seoul 03722, Republic of Korea}
\email{ypchoi@yonsei.ac.kr}

\author[Hwang]{Byung-Hoon Hwang}
\address[Byung-Hoon Hwang]{\newline Department of Mathematics Education\newline
	Sangmyung University, 20 Hongjimun 2-gil, Jongno-Gu, Seoul 03016, Republic of Korea}
\email{bhhwang@smu.ac.kr}

 \author[Song]{Sihyun Song}
 \address[Sihyun Song]{\newline Department of Mathematics\newline
 Yonsei University, 50 Yonsei-Ro, Seodaemun-Gu, Seoul 03722, Republic of Korea}
 \email{ssong@yonsei.ac.kr}

\begin{document}

\allowdisplaybreaks

\date{\today}

 \keywords{Quantum BGK model, Fermi--Dirac statistics, global mild solutions, Pauli exclusion principle, saturation, semiclassical limit.}

\begin{abstract}
We prove global existence of mild solutions to the spatially inhomogeneous Fermi--Dirac BGK equation for arbitrary-size Pauli-admissible initial data with finite mass and kinetic energy, allowing both vacuum and locally saturated zero-temperature states. The construction relies on a moment-compatible regularization of the local equilibrium that preserves the Pauli bound and uniform moment control while remaining consistent up to the saturation boundary. The solutions satisfy mass, momentum, and energy conservation and, under an additional finite spatial second-moment assumption, an entropy-gap H-theorem on the full Pauli interval.  Under uniform moment and entropy bounds, we further prove that, after extraction of a subsequence, quantum mild solutions converge strongly in phase space, uniformly on bounded time intervals, to a mild solution of the classical BGK equation.
\end{abstract}

\maketitle \centerline{\date}

\tableofcontents

%
%
%
%
%
%
\section{Introduction}
%
%
%
%
%
%
\subsection{The Fermi--Dirac BGK equation}

In this paper we study the Fermi--Dirac BGK equation
\bq \label{eq:FD-BGK}
\pa_t f+p\cdot\nabla_x f=\calF(f)-f,  \quad  (t,x,p)\in(0,\infty)\times\R^3\times\R^3,
\eq
where $\hbar>0$ is the quantum parameter and the Pauli exclusion principle is represented by
\[
 0\leq f\leq\frac1\hbar .
\]
Here $\calF(f)$ denotes the Fermi--Dirac local equilibrium associated with the quantum parameter $\hbar$. The collision operator in \eqref{eq:FD-BGK} is a relaxation-time approximation of the fermionic quantum Boltzmann equation associated with Fermi--Dirac statistics \cite{UU33}.

For a nonnegative distribution $f$, set
\[
 N_f(t,x):=\int_{\R^3}f(t,x,p)\,\dpp,  \quad  P_f(t,x):=\int_{\R^3}pf(t,x,p)\,\dpp,  \quad  E_f(t,x):=\int_{\R^3}|p|^2f(t,x,p)\,\dpp.
\]
The case $N_f(t,x)=0$ is immediate: nonnegativity implies $f(t,x,\cdot)=0$ almost everywhere, and we set $\calF(f)(t,x,\cdot)=0$.  Thus, we describe the local equilibrium on the non-vacuum set $\{N_f>0\}$.

Define
\[
 u_f:=\frac{P_f}{N_f},  \quad  \calE_f:=E_f-\frac{|P_f|^2}{N_f}.
\]
The quantity $\calE_f$ is the kinetic energy relative to the local mean velocity $u_f$, since
\[
 \calE_f  =  \int_{\R^3}|p-u_f|^2f(t,x,p)\,\dpp. 
 \]  
As will be shown below, the Pauli constraint implies the sharp inequality
\bq \label{eq:pa-b}
 \calE_f\geq \kappah N_f^{\frac53},  \quad  \kappah:=  \frac{3^{\frac53}}{5(4\pi)^{\frac23}}\hbar^{\frac23}.
\eq
Accordingly, the local equilibrium appearing in \eqref{eq:FD-BGK} is given by
\bq \label{eq:F-s}
 \calF(f)(t,x,p)
 =
 \begin{cases}
 \displaystyle  \frac{1}{  \exp \lt(  a_f(t,x)|p-u_f(t,x)|^2+c_f(t,x)  \rt)+\hbar},  &  \calE_f(t,x) >\kappah N_f(t,x)^{\frac53},  \\[3mm]
 \displaystyle  \frac1\hbar  \one_{\{|p-u_f(t,x)|\leq R_f(t,x)\}},  &  \calE_f(t,x) = \kappah N_f(t,x)^{\frac53},
 \end{cases}
\eq
where
\[
 R_f:=\lt(\frac{3\hbar N_f}{4\pi}\rt)^{\frac13}.
\]
The first line of \eqref{eq:F-s} is the finite-temperature Fermi--Dirac equilibrium, whereas the second line is its saturated zero-temperature endpoint.  We next describe how the parameters $a_f$ and $c_f$ in the first case are determined by the macroscopic moments and why the threshold in \eqref{eq:pa-b} is the natural one.

For $N>0$, set
\[
 B(N,P,E):=  \frac{N}{(E-|P|^2/N)^{\frac35}},  \quad  B_f:=B(N_f,P_f,E_f).
\]
For $k\geq0$, define
\[
 \beta_k^{\hbar}(c):=  \int_{\R^3}  \frac{|p|^k}{e^{|p|^2+c}+\hbar}\,\dpp,  \quad  \beta^{\hbar}(c):=  \frac{\beta_0^{\hbar}(c)}{\beta_2^{\hbar}(c)^{\frac35}}.
\]
The particular combination defining $B$ and $\beta^{\hbar}$ follows directly from the scaling of the Fermi--Dirac family.  Indeed, for
\[
 F_{a,c,u}(p):=  \frac1{e^{a|p-u|^2+c}+\hbar},  \quad a>0,
\]
the change of variables $q=\sqrt a\,(p-u)$ gives
\[
 N_{F_{a,c,u}} =a^{-\frac32}\beta_0^{\hbar}(c), \quad P_{F_{a,c,u}} =uN_{F_{a,c,u}},\quad  E_{F_{a,c,u}}  -\frac{|P_{F_{a,c,u}}|^2}{N_{F_{a,c,u}}} =a^{-\frac52}\beta_2^{\hbar}(c).
\]
Eliminating $a$ from these identities yields
\[
 B(  N_{F_{a,c,u}},  P_{F_{a,c,u}},  E_{F_{a,c,u}})  =  \beta^{\hbar}(c).
\]
Thus $B$ is the macroscopic quantity invariant under the temperature scaling of the family, while $\beta^{\hbar}$ is the corresponding equilibrium moment map.

The function $\beta^{\hbar}$ is strictly decreasing from $\R$ onto $(0,\beta^{\hbar}(-\infty))$, where
\[
 \beta^{\hbar}(-\infty)  =  \frac{5^{\frac35}(4\pi)^{\frac25}}  {3\hbar^{\frac25}}.
\]
See Lemma \ref{lem:beta} below.  Hence, whenever
\[
 0<B_f<\beta^{\hbar}(-\infty),
\]
there exists a unique $c_f\in\R$ satisfying
\[
 \beta^{\hbar}(c_f)=B_f,  \quad  a_f:=  \beta_0^{\hbar}(c_f)^{\frac23}N_f^{-\frac23}.
\]
The coefficient $a_f$ is the nondimensional inverse-temperature scale, whereas $c_f$ is a chemical-potential-type parameter.

The relation between the range of $B$ and the two cases in \eqref{eq:F-s} follows from the sharp Pauli inequality.  Lemma \ref{lem:sharp-pauli} shows that
\[
 \calE_f\geq\kappah N_f^{\frac53},  \quad  \kappah=\beta^{\hbar}(-\infty)^{-\frac53}.
\]
Consequently,
\[
 \calE_f>\kappah N_f^{\frac53}  \quad\Longleftrightarrow\quad  B_f<\beta^{\hbar}(-\infty),
\]
which is precisely the finite-temperature regime in the first line of \eqref{eq:F-s}.  At the endpoint,
\[
 \calE_f=\kappah N_f^{\frac53}  \quad\Longleftrightarrow\quad  B_f=\beta^{\hbar}(-\infty),
\]
the unique admissible distribution with the prescribed moments
is
\[
 \frac1\hbar  \one_{\{|p-u_f|\leq R_f\}},
\]
which gives the second line of \eqref{eq:F-s}.

The latter distribution is the zero-temperature endpoint of the finite-temperature Fermi--Dirac family.  Indeed, writing
\[
 a_f|p-u_f|^2+c_f  =  a_f\lt(|p-u_f|^2-\mu_f\rt),  \quad  \mu_f:=-\frac{c_f}{a_f},
\]
one sees that, as the inverse-temperature scale becomes large, the transition across the level $|p-u_f|^2=\mu_f$ becomes sharp.  We refer to the resulting profile as the saturated zero-temperature Fermi--Dirac distribution; the sphere $|p-u_f|=R_f$ is the Fermi surface in the present isotropic setting.

An intrinsic formulation, useful also at the endpoint, is provided by the Fermi--Dirac entropy.  Define
\[
 \eta_\hbar(z)  :=  z\log z  +\frac1\hbar(1-\hbar z)\log(1-\hbar z),  \quad  0\leq z\leq\frac1\hbar ,
\]
with the convention $0\log0=0$. Proposition \ref{prop:vari} shows that, for every non-vacuum moment triple $(N,P,E)$ arising from a Pauli-admissible distribution, the equilibrium $\calF[N,P,E]$ is the unique minimizer of
\[
 \int_{\R^3}\eta_\hbar(g)\,\dpp
\]
under the Pauli constraint and the prescribed mass, momentum, and energy.  Related quantum entropy optimization problems are discussed in \cite{EMV05}.  In particular, both regimes in \eqref{eq:F-s} satisfy the moment identities
\[
 \int_{\R^3}(1,p,|p|^2)  \lt(\calF(f)-f\rt) \dpp=0.
\]

For the classical BGK equation, global existence and stability based on compactness of the macroscopic fields, velocity-moment estimates, and entropy were established in \cite{Per89}. Weighted $L^\infty$ estimates and uniqueness were subsequently obtained in \cite{PP93}. Smooth solutions near a global Maxwellian and their large-time behavior were later studied in \cite{Yun10}. For the ES--BGK model, global theories are available both near equilibrium and, for fixed collision frequency, for large initial data \cite{HY19,Yun15JDE,Yun15SIAM}; multicomponent models and large-amplitude stability have also been developed in \cite{BKPY23,BKLY26}.

For the fermionic BGK model itself, the available time-dependent Cauchy theory is primarily perturbative. A global classical theory near a Fermi--Dirac equilibrium is given in \cite{BY20}, while stationary mild solutions in a slab are constructed in \cite{BY20S} under conditions that rule out a transition to a saturated local state. Related quantum BGK models include stationary and multi-species variants. Nouri \cite{Nou08} proved existence of bounded measure solutions for a stationary Bose--Einstein BGK model in a slab, while a multi-species BGK model associated with the Uehling--Uhlenbeck equation, including the well-definedness of its implicitly determined equilibrium parameters and its conservation and entropy structures, was developed in \cite{BKPY21}. Related quantum relaxation models have been treated in analytic or perturbative regimes \cite{Bra19,Bra20}, and global near-equilibrium theories are available for quantum Boltzmann equations \cite{BJY21,OW22}.

The zero-temperature branch is, however, a familiar feature of fermionic kinetic theory beyond BGK.  For the spatially homogeneous Boltzmann--Fermi--Dirac equation, the equilibrium classification contains both finite-temperature Fermi--Dirac distributions and saturated characteristic functions of momentum balls \cite{Lu01}; stability and strong convergence for this dynamics are studied in \cite{LW03}, and recent hard-potential results further develop the homogeneous theory and explicitly distinguish the saturated equilibrium case \cite{AP25p}.  Saturated equilibria and the degeneracy associated with the Pauli upper bound also play an important role for the Landau--Fermi--Dirac equation \cite{ABDL22,BL04,GGZ22}.  Spatially inhomogeneous weak-solution theories for the Boltzmann--Fermi--Dirac equation are available in the whole-space, general-domain, and periodic settings \cite{All10,Dol94,Lu08}; for harder interactions, global classical solutions near equilibrium have also been obtained \cite{JZ25}. More recently, global weak solutions and semiclassical analysis have been developed for the spatially inhomogeneous Coulomb Landau--Fermi--Dirac equation \cite{Sam24p,Sam26}.  These results show that saturation is a genuine and well-established feature of fermionic kinetic equations.  For the spatially inhomogeneous Fermi--Dirac BGK equation, however, incorporating this endpoint into a global large-data mild theory requires control of the nonlinear local equilibrium throughout the full Pauli-admissible moment region.

A separate line of work concerns semiclassical limits of fermionic kinetic equations.  For the spatially homogeneous quantum Boltzmann equation, the vanishing-quantum-parameter limit has been justified first at the level of weak convergence and subsequently through quantitative asymptotic expansions; see \cite{HLP21,HLPZ24}.  In the spatially inhomogeneous setting, the semiclassical limit from the Coulomb Landau--Fermi--Dirac equation to a renormalized solution of the classical Landau equation has recently been established in \cite{Sam26}.  These results concern collision operators whose semiclassical analysis is tied to the specific Boltzmann or Landau structure.  For the Fermi--Dirac BGK equation considered here, the quantum dependence is instead concentrated in an implicitly determined local equilibrium.  To the best of our knowledge, a rigorous semiclassical limit from the spatially inhomogeneous Fermi--Dirac BGK equation to the classical BGK equation has not previously been established.

%
%
%
%
%
%

\subsection{Main results}

Our first result treats arbitrary-size Pauli-admissible initial data with finite mass and kinetic energy for the spatially inhomogeneous Fermi--Dirac BGK equation in $\R_x^3$.  We construct global mild solutions that allow vacuum and permit the local moments to reach the sharp Pauli boundary corresponding to a saturated zero-temperature equilibrium. To the best of our knowledge, this is the first global large-data existence result for this equation covering the full range of Pauli-admissible local moments.

This goes beyond the available perturbative Cauchy theory, where the solution remains close to a fixed finite-temperature equilibrium, and the stationary slab theory, where the assumptions exclude a transition to a saturated local state.  In the present setting, vacuum and saturated local states are both admitted in the solution class.  Consequently, neither a positive lower bound on the local density nor a uniform separation from the Pauli boundary is available.
 
The main difficulty is to control the nonlinear local equilibrium simultaneously near vacuum and near saturation. The bulk velocity $P_f/N_f$ becomes singular as the density vanishes, whereas the finite-temperature parametrization degenerates as the moment triple approaches the sharp Pauli boundary. We first characterize the admissible moment region by the sharp Pauli inequality and define the equilibrium variationally on this entire region, including its saturated boundary.

A key methodological ingredient is a moment-compatible regularization of this equilibrium map. The issue is not merely to smooth the macroscopic parameters: the approximation must simultaneously remain Pauli-admissible, become Lipschitz for each fixed regularization parameter, and retain moment bounds that are uniform as the regularization is removed. We achieve this by coupling the regularization of the density and bulk velocity with a range truncation of the degeneracy parameter and a compatible interpolation near vacuum. At non-vacuum points, the resulting profiles remain within the finite-temperature Fermi--Dirac family, and the regularized equilibrium satisfies
\[
 \int_{\R^3}(1+|p|^2)\calF^\e(f)(p)\,\dpp \leq C_\hbar(N_f+E_f),
\]
with $C_\hbar$ independent of $\e$. In particular, the approximation does not introduce an additive energy background near vacuum. The fixed-$\e$ Lipschitz bound and this uniform moment estimate serve distinct purposes: the former yields the regularized solutions, while the latter permits the compactness argument. The regularization is also consistent for converging moment triples whose limits lie on the saturation boundary; see Lemmas \ref{lem:unif-e} and \ref{lem:fix-e-Lip} and Proposition \ref{prop:consist}.

Identification of the collision term requires strong compactness of the local energy in addition to the density and momentum. Spatial tightness, velocity averaging, and a local third-moment estimate based on the geometry of free trajectories in $\R_x^3$ provide the required compactness. The continuity of the moment-to-equilibrium map then identifies the nonlinear relaxation term, even at saturated limit points.

Exact conservation and entropy dissipation are recovered at the level of the limiting equation, rather than imposed on the regularized dynamics. The approximation preserves the Pauli bound and the uniform moment estimates, and its mass defect tends to zero. After passage to the limit, exact local moment matching yields mass, momentum, and energy conservation directly from the mild formulation. The entropy inequality requires a further argument since $\eta_\hbar'$ has logarithmic singularities at both endpoints of the Pauli interval. Hence, we use the entropy-gap dissipation and approximate $\eta_\hbar'$ by bounded monotone functions, preserving convexity of the entropy approximation. This yields the H-theorem on the full Pauli interval, including saturated zero-temperature states.

For functions on phase space, we use
\[
 \|g\|_{L^1_2}  :=  \iint_{\R^3\times\R^3}  (1+|p|^2)|g(x,p)|\,\dx\dpp,
\]
and, for functions of momentum alone,
\[
 \|g\|_{L^1_2(\R^3_p)}  :=  \int_{\R^3}(1+|p|^2)|g(p)|\,\dpp.
\]
We also define
\bq \label{eq:H-def}
 \mathscr H_\hbar(g)  :=  \iint_{\R^3\times\R^3}  \eta_\hbar(g)\,\dx\dpp
\eq
and
\bq \label{eq:D-def}
 \mathscr D_\hbar(g)  :=  \int_{\R^3}  \lt(  \int_{\R^3}\eta_\hbar(g)\,\dpp  -  \int_{\R^3}\eta_\hbar(\calF(g))\,\dpp  \rt)\dx.
\eq
By the variational characterization of the local equilibrium, $\mathscr D_\hbar(g)\geq0$ (see Proposition \ref{prop:vari}).

With this notation, our first main theorem gives the global construction of a mild solution together with the conservation laws and the entropy inequality.

\begin{theorem}\label{thm:main}
Assume
\bq \label{eq:ini}
 0\leq f_0\leq\frac1\hbar \quad\text{a.e.},  \quad  \|f_0\|_{L^1_2}<\infty.
\eq
Then there exists
\[
 f\in C([0,\infty);L^1(\R^3\times\R^3)) \cap L^\infty([0,\infty);L^1_2(\R^3\times\R^3))
\]
such that $0\leq f\leq\frac1\hbar $ almost everywhere and
\bq \label{eq:mild-main}
 f(t,x,p) =e^{-t}f_0(x-tp,p) +\int_0^t e^{-(t-s)} \calF(f)(s,x-(t-s)p,p)\,\ds
\eq
for every $t\geq0$ in $L^1(\R^3\times\R^3)$.  Moreover,
\[
 \iint_{\R^3\times\R^3}  (1,p,|p|^2)f(t,x,p)\,\dx\dpp  =  \iint_{\R^3\times\R^3}  (1,p,|p|^2)f_0(x,p)\,\dx\dpp
\]
for every $t\geq0$.

If, in addition,
\[
 \iint_{\R^3\times\R^3}  |x|^2f_0(x,p)\,\dx\dpp<\infty,
\]
then $\mathscr H_\hbar(f(t))$ is finite for every $t\geq0$, $\mathscr D_\hbar(f)\in L^1_{\rm loc}([0,\infty))$, and
\bq\label{eq:H-thm-gap}
 \mathscr H_\hbar(f(t))  +  \int_0^t\mathscr D_\hbar(f(s))\,\ds  \leq  \mathscr H_\hbar(f_0)  \quad\text{for every }t\geq0.
\eq
\end{theorem}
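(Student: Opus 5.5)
The plan is the approximation–compactness scheme sketched in the introduction. Fix $\e>0$ and replace $\calF$ by the regularized equilibrium $\calF^\e$ of Lemmas \ref{lem:unif-e} and \ref{lem:fix-e-Lip}. For fixed $\e$ this map is Lipschitz on $L^1_2$ and preserves the Pauli bound. We solve
\[
f^\e(t)=e^{-t}f_0(x-tp,p)+\int_0^te^{-(t-s)}\calF^\e(f^\e)(s,x-(t-s)p,p)\,\ds
\]
by a Banach fixed point in $C([0,T];L^1_2)$ on short intervals. The length of each interval depends only on the Lipschitz constant, because the transport semigroup is an isometry on $L^1$ in $(x,p)$ and the $|p|^2$ weight is invariant along characteristics. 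Iterating these short-time solutions gives a global solution. Two properties are propagated along the way:
\begin{itemize}
\item $0\le f^\e\le 1/\hbar$, which holds because $e^{-t}+\int_0^te^{-(t-s)}\ds=1$ and both $f_0$ and $\calF^\e$ take values in $[0,1/\hbar]$;
\item uniform-in-$\e$ bounds on $\|f^\e(t)\|_{L^1_2}$, obtained from the moment estimate $\int(1+|p|^2)\calF^\e(f)\,\dpp\le C_\hbar(N_f+E_f)$ and Gronwall.
\end{itemize}

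Next comes compactness. For tightness in $p$ we use the energy bound. For tightness in $x$ we control the mass of $f^\e$ outside large balls: mass reaches $|x|>R$ only from $|x|>R/2$ initially or with speed $|p|>R/(2t)$, so the energy bound handles it. Equi-integrability follows from the $L^\infty$ bound. Velocity averaging applied to the mild (hence kinetic) formulation gives strong $L^1_{\rm loc}$ compactness of $N_{f^\e}$ and $P_{f^\e}$. The hard part is $E_{f^\e}$, since $|p|^2$ is not a compactly supported test function. For this I would prove a local third-moment bound $\int_0^T\int_K\int|p|^3f^\e\,\dpp\,\dx\,\dt\le C$, using a dispersion/free-trajectory argument: a particle with speed $|p|$ spends time at most $\mathrm{diam}(K)/|p|$ in $K$, which gains one power of $|p|$ from the energy bound. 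This uniform integrability of $|p|^2$ upgrades averaging compactness to strong convergence of $E_{f^\e}$, and hence gives a.e. convergence of the moment triple after extraction. Proposition \ref{prop:consist} then gives $\calF^\e(f^\e)\to\calF(f)$ a.e. and in $L^1$, including at limit points on the saturation boundary and at vacuum. Dominated convergence uses the Pauli bound together with the uniform moment bound for the tails.

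With this in hand, we pass to the limit in the mild formulation and obtain \eqref{eq:mild-main}. Weak convergence of $f^\e$ combined with strong convergence of the relaxation term in fact yields strong convergence of $f^\e$. Continuity in time in $L^1$ follows from the formula. Conservation of mass, momentum and energy follows by integrating \eqref{eq:mild-main} in $(x,p)$ against $(1,p,|p|^2)$: transport preserves these integrals, and $\calF(f)$ has the same local moments as $f$. This gives $\frac{d}{dt}M(t)=M(\calF)-M=0$ for the total moment vector $M(t)$; the $|p|^2$ integrability is justified by the $L^1_2$ bound.

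For the H-theorem under $|x|^2f_0\in L^1$, first note that $\mathscr H_\hbar$ is finite. The negative part of $z\log z$ is controlled by $(|x|^2+|p|^2)f$, and the second-moment bound $\iint|x|^2f\le C(1+t^2)$ follows from the mild formula. We then regularize the singular derivative: take bounded monotone $\phi_k\uparrow\eta_\hbar'$, and let $\eta_k$ be its convex primitive. Along the limit equation, which is a renormalizable transport equation with $L^1$ right-hand side, we have
\[
\frac{d}{dt}\iint\eta_k(f)=\iint\phi_k(f)(\calF(f)-f).
\]
Convexity gives $\phi_k(f)(\calF-f)\le\eta_k(\calF)-\eta_k(f)$. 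Integrating in time, letting $k\to\infty$ by monotone convergence, and using finiteness of $\mathscr H_\hbar$ yields \eqref{eq:H-thm-gap}. The dissipation $\mathscr D_\hbar\ge0$ comes from Proposition \ref{prop:vari}, and $\mathscr D_\hbar\in L^1_{\rm loc}$ follows from the inequality itself. I expect the main obstacle to be the local third-moment / energy compactness step, since everything else, including identification at saturated limits, is delegated to the already-stated consistency of the regularization.
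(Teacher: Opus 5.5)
Your proposal follows the paper's proof essentially step for step. It uses the same regularized equilibrium and fixed-point construction with the Pauli and uniform $L^1_2$ bounds, the same free-trajectory gain of one local velocity moment (Lemma \ref{lem:3-mom}), and Proposition \ref{prop:consist} to identify the relaxation term at saturated and vacuum limit points. The remaining steps also match: passage to the limit in the Duhamel formula, conservation from exact moment matching, propagation of $\iint|x|^2f$, and an entropy convexified by truncating its derivative.

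The one real variation is how you upgrade the pointwise-in-$(t,x)$ convergence of $\calF^\e(f^\e)$ to convergence in $L^1((0,T)\times\R^3\times\R^3)$. You use a Vitali-type argument: the Pauli bound on bounded sets, the energy bound for $p$-tails, and spatial tightness for $x$-tails. The paper instead proves global $L^1$ convergence of $N_{f^\e}$ from the mass-defect bound \eqref{eq:m-loss-s} plus tightness, and then applies Fatou to $N_{f^\e}+N_f-h^\e\ge0$. Your version works and does not need the mass-defect estimate.

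Two sentences need repair.

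\textbf{The entropy truncation.} No bounded $\phi_k$ can satisfy $\phi_k\uparrow\eta_\hbar'$. Such a sequence would have $\phi_k\le\eta_\hbar'$, and $\eta_\hbar'(z)\to-\infty$ as $z\downarrow0$. Use the two-sided truncation $\Theta_m\circ\eta_\hbar'$ as the paper does. Its primitive $\eta_{\hbar,m}$ is not monotone in $m$: it decreases for $z\le 1/(1+\hbar)$, while the contribution from the region where $\eta_\hbar'>0$ increases. So monotone convergence has to be applied separately to the primitives of $(\Theta_m\circ\eta_\hbar')^{\pm}$, or replaced by dominated convergence via $|\eta_{\hbar,m}(z)|\le C_\hbar z(1+|\log z|)$.

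The dominated-convergence route also needs $\calF(f)|\log\calF(f)|\in L^1((0,T)\times\R^3\times\R^3)$. This holds because $\calF(f)$ has the same local mass and energy as $f$, hence the same $|x|^2$ and $|p|^2$ moments; this is what the paper uses. In the split monotone version, that integrability instead comes out of the inequality itself. That is what makes your remark that $\mathscr D_\hbar\in L^1_{\rm loc}$ ``follows from the inequality'' correct.

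\textbf{The spatial tightness.} Your sentence (mass reaches $|x|>R$ only from $|x|>R/2$ or with speed $|p|>R/(2t)$) describes free transport. Under relaxation, velocities are redrawn, so mass can leave the ball through several slow legs. The clean argument tests the density balance
\[
\pa_tN_{f^\e}+\nabla_x\cdot P_{f^\e}=\frac{N_{f^\e}}{1+\e N_{f^\e}}-N_{f^\e}\le0
\]
against a cutoff vanishing on $B_R$. This gives an $O(1/R)$ bound in terms of $\int_0^t\|P_{f^\e}\|_{L^1}$. Alternatively, you can iterate the Duhamel formula, using $N_{\calF^\e(f^\e)}\le N_{f^\e}$.
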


\begin{remark}
The entropy-gap formulation is convenient since it remains finite and well defined on the entire Pauli interval, including the endpoint values $f=0$ and $f=\frac1\hbar $.  On the finite-temperature region, a stronger logarithmic entropy production can be recovered under additional integrability assumptions excluding the endpoint singularities.  Indeed, writing $F=\calF(f)$, define formally
\[
 \mathscr D_\hbar^{\log}(f)  :=  \iint_{\R^3 \times \R^3}  (f-F)  \lt(  \log\frac{f}{1-\hbar f}  -  \log\frac{F}{1-\hbar F}  \rt) \dx\dpp .
\]
Equivalently,
\[
 \mathscr D_\hbar^{\log}(f) = \iint_{\R^3 \times \R^3}  (f-F)  \log\lt(  \frac{f(1-\hbar F)}{F(1-\hbar f)}  \rt) \dx\dpp
 \ge0.
\]
When $F$ is a finite-temperature equilibrium, we get $\eta_\hbar'(F)  =  -a|p-u|^2-c$, and exact local moment matching gives
\[
 \int_{\R^3} \eta_\hbar'(F)(f-F)\,\dpp=0.
\]
Thus, whenever the logarithmic terms are integrable, the characteristic chain rule yields the stronger entropy identity
\[
\mathscr H_\hbar(f(t))  +  \int_0^t\mathscr D_\hbar^{\log}(f(s))\,\ds  =  \mathscr H_\hbar(f_0).
\]
Moreover, by convexity,
\[
 \mathscr D_\hbar^{\log}(f)\ge  \mathscr D_\hbar(f).
\]
If $f$ reaches $0$ or $\frac1\hbar $, however, the logarithmic production may fail to be finite, whereas the entropy-gap dissipation $\mathscr D_\hbar$ remains meaningful.  At a saturated point, the sharp Pauli inequality (Lemma \ref{lem:sharp-pauli} below) forces $f=\calF(f)$ almost everywhere in momentum, hence the entropy gap vanishes automatically.
\end{remark}

\begin{remark}
Strict separation from the Pauli ceiling is propagated for all positive times.  More precisely, assume in addition that
\[
 f_0(x,p)<\frac1\hbar   \quad\text{for almost every }(x,p).
\]
Then the mild formula and $0\le\calF(f)\le\frac1\hbar $ give, for every $t>0$,
\[
 f(t,x,p) \le  e^{-t}f_0(x-tp,p)  +  \frac1\hbar \int_0^t e^{-(t-s)}\,ds   =  e^{-t}f_0(x-tp,p)  +(1-e^{-t})\frac1\hbar   <\frac1\hbar 
\]
for almost every $(x,p)$.  Hence equality in the sharp Pauli inequality (Lemma \ref{lem:sharp-pauli} below) is impossible at almost every non-vacuum point. This means
\[
 E_f(t,x)-\frac{|P_f(t,x)|^2}{N_f(t,x)}  >  \kappa_\hbar N_f(t,x)^{\frac53}
\]
almost everywhere on $\{N_f>0\}$.  Consequently, the local equilibrium $\calF(f)$ remains on the finite-temperature Fermi--Dirac branch for all positive times.

Similarly, if $f_0>0$ almost everywhere, then
\[
 f(t,x,p)\ge e^{-t}f_0(x-tp,p)>0.
\]
 Lemma \ref{lem:sharp-pauli} tells us that this also excludes the possibility of the local equilibrium $\calF(f)$ being saturated. 
\end{remark}

Theorem \ref{thm:main} concerns the quantum equation for a fixed $\hbar>0$.  Our second main result addresses the complementary question of whether the corresponding quantum solutions recover the classical BGK dynamics in the semiclassical regime $\hbar\to0$.  To state the result, for a nonnegative classical distribution $g$ with $N_g>0$, define
\[
 u_g:=\frac{P_g}{N_g},  \quad  T_g:=\frac{E_g-|P_g|^2/N_g}{3N_g},
\]
and the local Maxwellian
\[
 M(g)(t,x,p)  :=  \frac{N_g}{(2\pi T_g)^{\frac32}}  \exp \lt(-\frac{|p-u_g|^2}{2T_g}\rt).
\]
At every non-vacuum point, $T_g>0$, since an $L^1(\R^3_p)$ distribution with positive mass cannot be supported at a single velocity. At points where $N_g=0$, we set $M(g)=0$.

The compactness mechanism required for the semiclassical limit is different from the fixed-$\hbar$ construction.  For each fixed $\hbar>0$, the Pauli bound supplies the $L^2$ control used in the proof of Theorem \ref{thm:main}, whereas this control degenerates as $\hbar\to0$.  The uniform entropy bound instead gives equiintegrability. We combine bounded renormalizations with $L^1$ velocity averaging and then remove the renormalization by a quantitative $L\log L$ estimate. Together with the local third-moment bound, this yields strong compactness of the density, momentum, and energy. A second issue is the identification of the $\hbar$-dependent local equilibria.  Their parameters are determined implicitly by the Fermi--Dirac moment map, while the saturation threshold itself varies with $\hbar$.  At every non-vacuum limit point, positivity of the classical internal energy places the quantum equilibria eventually in the finite-temperature branch as $\hbar\to0$.  Compactness of the equilibrium parameters, together with local uniform convergence of the Fermi--Dirac moment map to its classical counterpart, then identifies the limiting equilibrium with the Maxwellian.  This gives strong convergence of the collision terms and ultimately the $C([0,T];L^1(\R^3\times\R^3))$ convergence in Theorem \ref{thm:semi}.

The following theorem shows that, under uniform moment and entropy bounds, every sequence of the quantum mild solutions constructed above admits a subsequence converging strongly to a mild solution of the classical BGK equation.

\begin{theorem}\label{thm:semi}
Let $\hbar_n\downarrow0$, and assume
\bq \label{eq:semi-ini-conv}
 0\leq f_{0,n}\leq\hbar_n^{-1},  \quad  f_{0,n}\to f_0  \quad\text{strongly in }L^1(\R^3\times\R^3),
\eq
for some $f_0\geq0$.  Suppose also that
\bq \label{eq:semi-ini-b}
 \sup_n  \iint_{\R^3\times\R^3}  (1+|x|^2+|p|^2)f_{0,n}\,\dx\dpp<\infty,  \quad  \sup_n  \iint_{\R^3\times\R^3}  f_{0,n}|\log f_{0,n}|\,\dx\dpp<\infty.
\eq
Let $f_n$ be global mild solutions supplied by Theorem \ref{thm:main} with $\hbar=\hbar_n$ and initial data $f_{0,n}$.  Then, after extraction of a subsequence,
\bq \label{eq:semi-f-str}
 f_n\to f  \quad\text{strongly in }  C([0,T];L^1(\R^3\times\R^3))  \quad\text{for every }T>0,
\eq
where $f$ satisfies
\[
\pa_t f+p\cdot\nabla_xf=M(f)-f,  \quad  f|_{t=0}=f_0,
\]
in the mild sense, namely
\bq \label{eq:classical-mild}
 f(t,x,p) =e^{-t}f_0(x-tp,p) +\int_0^t e^{-(t-s)}  M(f)(s,x-(t-s)p,p)\,\ds.
\eq
\end{theorem}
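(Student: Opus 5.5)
The proof will follow the compactness scheme outlined before the statement, in four steps.

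\textbf{Step 1: uniform bounds.} For each $n$, the solution $f_n$ from Theorem \ref{thm:main} conserves mass, momentum and energy. Write the mild equation along characteristics and use $\int |p|^2\calF(f_n)\,\dpp=E_{f_n}$. Then $|x|^2$ grows at most quadratically along $x-tp$, which gives $\sup_n\sup_{t\le T}\iint(1+|x|^2+|p|^2)f_n<\infty$.

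For the entropy, use the H-theorem \eqref{eq:H-thm-gap}. It gives $\mathscr H_{\hbar_n}(f_n(t))\le \mathscr H_{\hbar_n}(f_{0,n})$. The extra term $\hbar^{-1}(1-\hbar z)\log(1-\hbar z)$ lies between $-z$ and $0$ for $0\le z\le\hbar^{-1}$. Hence $\mathscr H_{\hbar_n}(g)$ and $\iint g\log g$ differ by at most the mass. Combined with the $|x|^2+|p|^2$ moments, the standard Carleman-type argument then yields
\[
\sup_n\sup_{t\le T}\iint f_n|\log f_n|\,\dx\dpp<\infty.
\]
The same bound holds for $\calF(f_n)$ by the variational minimality in Proposition \ref{prop:vari}, together with the identity $\int_0^t\mathscr D\ge0$. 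In particular, $\{f_n\}$ and $\{\calF(f_n)\}$ are equiintegrable and tight in $(x,p)$, uniformly in $t\in[0,T]$.

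\textbf{Step 2: compactness of moments.} Apply $L^1$ velocity averaging to the bounded renormalizations $\beta_\delta(f_n)=f_n/(1+\delta f_n)$. These satisfy a transport equation whose right-hand side is bounded in $L^1$. This gives $L^1_{loc}$ compactness of $\int\beta_\delta(f_n)\psi(p)\,\dpp$. The uniform $L\log L$ bound then gives $\|f_n-\beta_\delta(f_n)\|_{L^1}\lesssim 1/\log(1/\delta)$ uniformly in $n$, which removes the renormalization. To pass from compactly supported $\psi$ to the weights $1,p,|p|^2$, use the local third-moment estimate from the fixed-$\hbar$ proof. Its constants depend only on mass, energy and $T$, not on $\hbar$, so it transfers here. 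Together with tightness, this yields $N_{f_n}\to N_f$, $P_{f_n}\to P_f$ and $E_{f_n}\to E_f$ strongly in $L^1((0,T)\times\R^3)$, and a.e. after a further subsequence. Meanwhile $f_n\rightharpoonup f$ weakly in $L^1$ by Dunford--Pettis.

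\textbf{Step 3: identifying the equilibrium.} This is the main obstacle. Fix $(t,x)$ with $N_f>0$ and a.e.\ moment convergence. The $L\log L$ bound on $f$ gives $\calE_f>0$, since mass cannot concentrate on a point. Because $\kappa_{\hbar_n}\to0$, eventually $\calE_{f_n}>\kappa_{\hbar_n}N_{f_n}^{5/3}$, so $\calF(f_n)$ lies on the finite-temperature branch. Moreover $B_{f_n}\to B_f<\infty$. For fixed $c$ in a compact set, $\beta^{\hbar}(c)\to\beta^0(c)=\pi^{3/2}e^{-c}/(\tfrac32\pi^{3/2}e^{-c})^{3/5}$ locally uniformly as $\hbar\to0$. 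I must also exclude $c_{n}\to-\infty$. For this, use that $\beta^{\hbar}$ is bounded below by a uniform curve: $\beta^\hbar(c)\ge\beta^0(c)$ in the relevant range because $\hbar$ only decreases the denominator. Then strict monotonicity of $\beta^0$ forces $c_{f_n}\to c_f$ with $\beta^0(c_f)=B_f$, and hence $a_{f_n}\to a_f$. Therefore $\calF(f_n)(t,x,\cdot)\to M(f)(t,x,\cdot)$ pointwise in $p$. With matching masses, Scheff\'e's lemma upgrades this to $L^1_p$ convergence. At vacuum points, $\|\calF(f_n)\|_{L^1_p}=N_{f_n}\to0$. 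Dominated convergence in $(t,x)$ then gives $\calF(f_n)\to M(f)$ in $L^1((0,T)\times\R^6)$.

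\textbf{Step 4: strong convergence and the limit equation.} Subtract the mild formulas \eqref{eq:mild-main} for $f_n$ and $f_m$. Free transport is an $L^1$ isometry, so
\[
\sup_{t\le T}\|f_n(t)-f_m(t)\|_{L^1}\le\|f_{0,n}-f_{0,m}\|_{L^1}+\|\calF(f_n)-\calF(f_m)\|_{L^1((0,T)\times\R^6)}\to0 .
\]
Thus $\{f_n\}$ is Cauchy in $C([0,T];L^1)$, which gives \eqref{eq:semi-f-str}. Passing to the limit in \eqref{eq:mild-main} then yields \eqref{eq:classical-mild}.
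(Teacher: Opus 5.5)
Your architecture matches the paper's: uniform moment and $L\log L$ bounds, bounded renormalization with $L^1$ velocity averaging, the local third moment, pointwise identification of the equilibria, then the mild formula. However, the step you yourself single out as the main obstacle is not correctly done.

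\textbf{The comparison inequality is false.} The inequality $\beta^{\hbar}(c)\ge\beta^0(c)$ fails exactly in the regime where you use it. By Lemma \ref{lem:beta}, $\beta^{\hbar}(c)<\beta^{\hbar}(-\infty)<\infty$ for each fixed $\hbar>0$. On the other hand, $\beta^0(c)=(2\pi/3)^{3/5}e^{-2c/5}\to\infty$ as $c\to-\infty$. So the inequality fails for all sufficiently negative $c$, which is precisely the case $c_n\to-\infty$ you are trying to exclude.

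It also fails on compact $c$-sets for small $\hbar$. Expanding $1/(e^{y}+\hbar)=e^{-y}-\hbar e^{-2y}+O(\hbar^2)$ gives $\beta^{\hbar}(c)=\beta^0(c)\bigl(1-\tfrac75 2^{-5/2}\hbar e^{-c}+O(\hbar^2)\bigr)<\beta^0(c)$. Your heuristic that ``$\hbar$ only decreases the denominator'' ignores that $\hbar$ lowers the numerator $\beta_0^{\hbar}$ too, and at leading order by a larger relative amount.

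\textbf{The other direction is missing.} You never exclude $c_n\to+\infty$. Strict monotonicity of $\beta^0$ together with the merely locally uniform convergence of Lemma \ref{lem:beta-semic} identifies the limit only once $\{c_n\}$ is known to stay in a fixed compact interval. As written, $c_n\to c_f$ is therefore not established. Neither are $a_n\to a_f$ and $\calF(f_n)\to M(f)$, which depend on it.

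\textbf{The repair.} Use a bracketing argument based on the monotonicity of each $\beta^{\hbar_n}$, not on any comparison with $\beta^0$. Set $B=N_f/\calE_f^{3/5}$ and choose $c_-<c_+$ with $\beta^0(c_-)>2B$ and $\beta^0(c_+)<B/2$. Since $\beta^{\hbar_n}(c_\pm)\to\beta^0(c_\pm)$ and $B_n\to B$, for large $n$ you get $\beta^{\hbar_n}(c_+)<B_n=\beta^{\hbar_n}(c_n)<\beta^{\hbar_n}(c_-)$. Strict monotonicity of $\beta^{\hbar_n}$ then gives $c_-<c_n<c_+$. By local uniform convergence, every subsequential limit $c_*$ satisfies $\beta^0(c_*)=B$, so $c_n\to(\beta^0)^{-1}(B)$. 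This is exactly how the paper proceeds.

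\textbf{A smaller point.} In Step 2, the bounds on $|x|^2f_n$ and $|p|^2f_n$ give spatial tightness of $N_{f_n}$ and $P_{f_n}$, but not of $E_{f_n}$. A vanishing mass carrying order-one energy can run off to spatial infinity. So $E_{f_n}\to E_f$ holds only in $L^1_{\rm loc}$, not globally as you claim. This is harmless: you only use a.e.\ convergence of the moments, plus global $L^1$ convergence of $N_{f_n}$ for the generalized dominated convergence in Step 3.
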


The assumptions on the approximating initial data in Theorem \ref{thm:semi} are satisfied by the canonical Pauli truncation of any classical datum with finite moments and entropy.  Thus every such datum admits an approximating quantum family covered by Theorem \ref{thm:semi}.

\begin{corollary}\label{cor:canon}
Assume $f_0\geq0$, and
\[
 \iint_{\R^3\times\R^3}  (1+|x|^2+|p|^2)f_0\,\dx\dpp<\infty,  \quad  \iint_{\R^3\times\R^3}  f_0|\log f_0|\,\dx\dpp<\infty.
\]
For $0<\hbar\leq1$, set
\[
 f_0^\hbar:=\min\lt\{f_0,\frac1\hbar\rt\}.
\]
Then, for every sequence $\hbar_n\downarrow0$, the initial data $f_{0,n}:=f_0^{\hbar_n}$ satisfy the assumptions of Theorem \ref{thm:semi}.  Consequently, the corresponding quantum mild solutions possess a subsequence converging strongly in $C([0,T];L^1(\R^3\times\R^3))$ for every $T>0$ to a mild solution of the classical BGK equation with initial datum $f_0$.
\end{corollary}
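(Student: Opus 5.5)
The corollary reduces to checking the hypotheses \eqref{eq:semi-ini-conv}--\eqref{eq:semi-ini-b} of Theorem \ref{thm:semi} for $f_{0,n}=\min\{f_0,\hbar_n^{-1}\}$; the conclusion then follows by applying that theorem. The Pauli bound $0\le f_{0,n}\le\hbar_n^{-1}$ holds by construction. For the moment bound, the pointwise inequality $0\le f_{0,n}\le f_0$ gives
\[
\iint(1+|x|^2+|p|^2)f_{0,n}\,\dx\dpp\le \iint(1+|x|^2+|p|^2)f_0\,\dx\dpp<\infty
\]
uniformly in $n$. In particular $\|f_{0,n}\|_{L^1_2}<\infty$, so Theorem \ref{thm:main} indeed supplies the global mild solutions $f_n$, and since the spatial second moment is finite the H-theorem part also applies.

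For the strong $L^1$ convergence, $f_0\in L^1$ implies $f_0<\infty$ almost everywhere. Hence $f_{0,n}\to f_0$ pointwise a.e. as $\hbar_n\downarrow0$, and since $|f_{0,n}-f_0|\le f_0$, dominated convergence gives $f_{0,n}\to f_0$ in $L^1(\R^3\times\R^3)$. More explicitly,
\[
\|f_0-f_{0,n}\|_{L^1}=\iint_{\{f_0>1/\hbar_n\}}(f_0-\hbar_n^{-1})\,\dx\dpp\to0.
\]

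The only step requiring a little care is the entropy bound, because $z|\log z|$ is not monotone in $z$. I would split according to the value of $f_0$.
\begin{itemize}
\item On $\{f_0\le 1/\hbar_n\}$ we have $f_{0,n}=f_0$, so $f_{0,n}|\log f_{0,n}|=f_0|\log f_0|$.
\item On $\{f_0>1/\hbar_n\}$ we have $f_{0,n}=\hbar_n^{-1}\ge1$, because $\hbar_n\le1$. Since $z\mapsto z\log z$ is nondecreasing on $[1,\infty)$, it follows that $f_{0,n}|\log f_{0,n}|=\hbar_n^{-1}\log\hbar_n^{-1}\le f_0\log f_0=f_0|\log f_0|$.
\end{itemize}
Thus $f_{0,n}|\log f_{0,n}|\le f_0|\log f_0|$ pointwise, which yields the uniform entropy bound. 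Here the restriction $0<\hbar\le1$ is exactly what is used. With all hypotheses verified, Theorem \ref{thm:semi} produces a subsequence converging in $C([0,T];L^1)$ for every $T>0$ to a classical mild solution with datum $f_0$.
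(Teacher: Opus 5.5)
Your proposal is correct and follows essentially the same route as the paper: the pointwise bound $0\le f_0^{\hbar_n}\le f_0$ gives both the uniform moment bound and, via dominated convergence, the strong $L^1$ convergence, and the entropy bound comes from monotonicity of $z\log z$ on $[1,\infty)$, applied on the truncation set $\{f_0>1/\hbar_n\}$ where $\hbar_n^{-1}\ge1$. Your extra remark that the finite spatial second moment makes the entropy part of Theorem \ref{thm:main} available is a useful check that the paper leaves implicit.
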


\begin{remark} 
Theorem \ref{thm:semi} also clarifies what happens to the zero-temperature branch in the semiclassical regime.  For fixed $\hbar>0$, the Pauli constraint imposes the sharp lower bound
\[
 E-\frac{|P|^2}{N}  \geq  \kappah N^{\frac53},  \quad  \kappah  =  \frac{3^{\frac53}}{5(4\pi)^{\frac23}}\hbar^{\frac23},
\]
and equality corresponds to the saturated Fermi--Dirac equilibrium.  As $\hbar\to0$,
\[
 \kappah\to0,  \quad  \beta^{\hbar}(-\infty)  =  \frac{5^{\frac35}(4\pi)^{\frac25}}  {3\hbar^{\frac25}}  \to\infty.
\]
Thus, at every non-vacuum point of the $L^1$ limit, where the internal energy is necessarily positive, the quantum equilibria eventually belong to the finite-temperature branch.  This explains why the limiting equilibrium in Theorem \ref{thm:semi} is a Maxwellian rather than a saturated profile.

The saturated branch does, however, have a different formal semiclassical limit.  For fixed mass $N>0$ and velocity $u$, the saturated equilibrium is
\[
 F_\hbar^{\rm sat}(p)  =  \frac1\hbar  \one_{\{|p-u|\leq R_\hbar\}},  \quad  R_\hbar  =  \lt(\frac{3\hbar N}{4\pi}\rt)^{\frac13}.
\]
As $\hbar\to0$, its support shrinks to $p=u$ while its height diverges, and
\[
 F_\hbar^{\rm sat}  \rightharpoonup N\delta_u
\]
in the sense of measures.  Hence following the zero-temperature boundary leads to a monokinetic measure rather than an $L^1$ Maxwellian.  This concentration regime is excluded by the assumptions of Theorem \ref{thm:semi}; in particular,
\[
 \int_{\R^3}  F_\hbar^{\rm sat}  |\log F_\hbar^{\rm sat}|\,\dpp  =  N|\log\hbar|  \to \infty.
\]
Thus the semiclassical limit considered here describes the positive-temperature classical BGK regime, while the simultaneous zero-temperature and semiclassical limit would constitute a different, measure-valued limiting problem.
\end{remark}

The rest of the paper is organized as follows. In Section \ref{sec:approx}, we discuss the moment structure of Fermi--Dirac equilibria, construct the regularized collision operator, and establish the corresponding regularized solutions. In Section \ref{sec:pass-eps}, we obtain compactness of the macroscopic moments and identify the limiting collision term, thereby completing the construction in Theorem \ref{thm:main}.  Section \ref{sec:H-thm} establishes the conservation laws and the entropy inequality.  Finally, Section \ref{sec:semiclas} proves the semiclassical limit stated in Theorem \ref{thm:semi} and verifies the canonical Pauli approximation in Corollary \ref{cor:canon}.

%
%
%
%
%
%
\section{Regularization of the Fermi--Dirac equilibrium}\label{sec:approx}

In this section we construct the regularized collision operator used in the proof of Theorem \ref{thm:main} and establish the estimates required for the limit $\e\to0$.  We first describe the admissible moment region, then introduce the regularized macroscopic parameters and the exact equilibrium map, and finally solve the regularized equation. 

Since Theorem \ref{thm:main} is concerned with the existence theory for fixed $\hbar>0$, throughout this section we omit the dependence on $\hbar$ whenever there is no risk of confusion.

%
%
%
%
%
%

\subsection{Moment identities and the admissible region}

We begin with the general Fermi--Dirac profile
\[
 F(p):=\frac{1}{e^{a|p-b|^2+c}+\hbar},  \quad a>0,\quad b\in\R^3,\quad c\in\R,
\]
and we recall
\[
 \beta_0(c)=\int_{\R^3}\frac{1}{e^{|p|^2+c}+\hbar}\,\dpp,  \quad  \beta_2(c)=\int_{\R^3}\frac{|p|^2}{e^{|p|^2+c}+\hbar}\,\dpp.
\]
A direct change of variables gives
\bq \label{eq:FD-mom-com}
 N_F=a^{-\frac32}\beta_0(c),\quad  P_F=bN_F,\quad  E_F=a^{-\frac52}\beta_2(c)+|b|^2N_F.
\eq
Thus, if $a=\beta_0(c)^{\frac23}N^{-\frac23}$, then
\bq\label{eq:FD-mom}
 N_F=N,\quad  E_F=\beta(c)^{-\frac53}N^{\frac53}+N|b|^2.
\eq
These identities explain both the definition of $B(N,P,E)$ and the role of $\beta$ in determining the equilibrium coefficient $c$.

The first structural question is to determine precisely which macroscopic moments are compatible with the Pauli constraint.  The following sharp inequality gives this characterization and identifies the saturated boundary.

\begin{lemma}\label{lem:sharp-pauli}
Let $N>0$ and $P\in\R^3$.  Set
\[
 u:=\frac PN,  \quad  R_N:=\lt(\frac{3\hbar N}{4\pi}\rt)^{\frac13},  \quad  g_\star(p):=\frac1\hbar\one_{\{|p-u|\leq R_N\}}.
\]
If $g$ satisfies
\[
 0\leq g\leq\frac1\hbar ,  \quad  (1+|p|^2)g\in L^1(\R^3), \quad   \int_{\R^3}g\,\dpp=N,  \quad  \int_{\R^3}pg\,\dpp=P,
\]
then
\bq \label{eq:sharp-pauli}
 E_g-\frac{|P|^2}{N}  \geq \kappah N^{\frac53},  \quad  \kappah:=\frac{3^{\frac53}}{5(4\pi)^{\frac23}}\hbar^{\frac23}.
\eq
Equality holds if and only if $g=g_\star$ almost everywhere.
\end{lemma}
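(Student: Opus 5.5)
This is a bathtub (rearrangement) argument. First reduce to $u=0$ by translating in momentum. Set $h(p):=g(p+u)$. Then
\[
 \int h\,\dpp=N,\qquad \int p\,h\,\dpp=0,
\]
and
\[
 E_g-\frac{|P|^2}{N}=\int|p-u|^2g\,\dpp=\int|p|^2h\,\dpp .
\]
So it suffices to show that, among all $0\le h\le\frac1\hbar$ with $\int h=N$, the quantity $\int|p|^2h$ is minimized exactly by $h_\star:=\frac1\hbar\one_{\{|p|\le R_N\}}$. The radius $R_N$ is chosen so that $\frac1\hbar|B_{R_N}|=\frac{4\pi R_N^3}{3\hbar}=N$, which means $\int h_\star=N$.

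The key pointwise inequality is
\[
 (|p|^2-R_N^2)\bigl(h(p)-h_\star(p)\bigr)\ge0\quad\text{for all }p .
\]
Indeed, for $|p|\le R_N$ we have $h-h_\star=h-\frac1\hbar\le0$ and $|p|^2-R_N^2\le0$. For $|p|>R_N$ we have $h-h_\star=h\ge0$ and $|p|^2-R_N^2>0$.

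Integrate this inequality; this is legitimate since $(1+|p|^2)h\in L^1$ and $h_\star$ has compact support. Using $\int(h-h_\star)=0$, we get
\[
 \int|p|^2h\,\dpp-\int|p|^2h_\star\,\dpp=\int(|p|^2-R_N^2)(h-h_\star)\,\dpp\ge0 .
\]
The value at the minimizer is
\[
 \int|p|^2h_\star=\frac{4\pi}{\hbar}\int_0^{R_N}r^4\,\dr=\frac{4\pi R_N^5}{5\hbar}.
\]
Substituting $R_N=(3\hbar N/4\pi)^{1/3}$ gives
\[
 \frac{4\pi}{5\hbar}\Bigl(\frac{3\hbar N}{4\pi}\Bigr)^{5/3}=\frac{3^{5/3}}{5(4\pi)^{2/3}}\hbar^{2/3}N^{5/3}=\kappah N^{5/3},
\]
which is \eqref{eq:sharp-pauli}.

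For the equality case, suppose equality holds in \eqref{eq:sharp-pauli}. Then the nonnegative integrand $(|p|^2-R_N^2)(h-h_\star)$ vanishes almost everywhere. Off the null sphere $\{|p|=R_N\}$ this forces $h=h_\star$, so $h=h_\star$ a.e. and hence $g=g_\star$ a.e. Conversely, $g_\star$ has mass $N$ and momentum $uN=P$ (by symmetry) and attains equality.

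No step is a serious obstacle. The only points requiring care are justifying the integration (finiteness of the second moment) and noting that the sphere $\{|p|=R_N\}$ has measure zero, so the equality case is not affected by it.
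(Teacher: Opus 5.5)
Your proof is correct and is essentially the paper's argument. The paper integrates the same pointwise sign inequality $(|p-u|^2-R_N^2)(g-g_\star)\ge0$ using equality of masses, evaluates $\frac{4\pi}{5\hbar}R_N^5=\kappa_\hbar N^{5/3}$, and settles equality through the null sphere; the only difference is that you first translate to $u=0$, while the paper works directly with $|p-u|^2$.
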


\begin{proof}
Since $g$ and $g_\star$ have the same mass,
\[
 \int_{\R^3}(g-g_\star)\,\dpp=0.
\]
Consequently,
\[
\int_{\R^3}|p-u|^2(g-g_\star)\,\dpp =  \int_{\R^3}(|p-u|^2-R_N^2)(g-g_\star)\,\dpp.
\]
On $\{|p-u|<R_N\}$, both factors on the right are nonpositive, while on $\{|p-u|>R_N\}$ both are nonnegative.  Hence the right-hand side is nonnegative.  Moreover,
\[
 \int_{\R^3}|p-u|^2g\,\dpp=E_g-\frac{|P|^2}{N},
\]
whereas
\[
 \int_{\R^3}|p-u|^2g_\star\,\dpp  =\frac{4\pi}{\hbar}\int_0^{R_N}r^4\dd r  =\frac{4\pi}{5\hbar}R_N^5 =\frac{3^{\frac53}}{5(4\pi)^{\frac23}}\hbar^{\frac23}N^{\frac53}.
\]
This proves \eqref{eq:sharp-pauli}.  If equality holds, then
\[
 (|p-u|^2-R_N^2)(g-g_\star)=0
\]
almost everywhere. Since the sphere $\{|p-u|=R_N\}$ has zero Lebesgue measure, this forces $g=\frac1\hbar $ almost everywhere inside the ball and $g=0$ almost everywhere outside. Thus $g=g_\star$.  The converse is immediate.
\end{proof}

To recover the finite-temperature parameters from the macroscopic moments, we next record the range and monotonicity of the scalar map $\beta$.

\begin{lemma}\label{lem:beta}
For each $\hbar>0$, the function $\beta^{\hbar} = \beta_0^{\hbar}/(\beta_2^{\hbar})^{\frac35}$ is continuous and strictly decreasing on $\R$.  Moreover,
\[
 \lim_{c\to+\infty}\beta^{\hbar}(c)=0,  \quad  \lim_{c\to-\infty}\beta^{\hbar}(c)=\beta^{\hbar}(-\infty) = \frac{5^{\frac35}(4\pi)^{\frac25}}{3\hbar^{\frac25}} .
\]
Hence $\beta^{\hbar}:\R\to(0,\beta^{\hbar}(-\infty))$ is a continuous strictly decreasing bijection.
\end{lemma}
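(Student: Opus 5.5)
Write $\phi_c(p):=(e^{|p|^2+c}+\hbar)^{-1}$. By radial symmetry, $\beta_k(c)=4\pi\int_0^\infty r^{k+2}\phi_c(r)\,\dr$.

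\emph{Continuity.} For $c\in[c_1,c_2]$ we have the bound $\phi_c\le \min\{\hbar^{-1},e^{-|p|^2-c_1}\}$, which is integrable against $1+|p|^2$. Dominated convergence then gives continuity of $\beta_0$ and $\beta_2$. Both are strictly positive, so $\beta$ is continuous.

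\emph{Strict monotonicity.} The plan is to show $(\log\beta)'(c)<0$, which is the same as
\[
\frac{\beta_0'(c)}{\beta_0(c)}<\frac35\,\frac{\beta_2'(c)}{\beta_2(c)} .
\]
I would avoid computing the derivatives directly and instead use an integration by parts in $r$. Since $\partial_c\phi_c=-\phi_c(1-\hbar\phi_c)$ and $\partial_r\phi_c=-2r\phi_c(1-\hbar\phi_c)$, we get $\partial_c\phi_c=\frac1{2r}\partial_r\phi_c$. Integrating by parts (boundary terms vanish) gives
\[
\beta_k'(c)=2\pi\int_0^\infty r^{k+1}\partial_r\phi_c\,\dr=-\frac{k+1}{2}\,4\pi\int_0^\infty r^{k}\phi_c\,\dr .
\]
For $k=0$ this only yields $\beta_0'$ in terms of a moment of order $-2$, so I will parametrize differently.

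Set $w(r):=\phi_c(1-\hbar\phi_c)>0$. Then $\beta_k'=-4\pi\int r^{k+2}w\,\dr$, and the same integration by parts gives $4\pi\int r^{k+2}\phi_c\,\dr=\frac{2}{k+3}\,4\pi\int r^{k+4}w\,\dr$. Define $m_j:=\int_0^\infty r^j w\,\dr$. We obtain
\[
\beta_0=\tfrac{8\pi}{3}m_4,\qquad \beta_2=\tfrac{8\pi}{5}m_6,\qquad \beta_0'=-4\pi m_2,\qquad \beta_2'=-4\pi m_4 .
\]
The desired inequality becomes $-\frac{3m_2}{2m_4}<-\frac{3m_4}{2m_6}$, that is, $m_4^2<m_2m_6$. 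This is strict Cauchy--Schwarz for the measure $w\,\dr$, applied to $r$ and $r^3$; it is strict because $r^2$ is not constant on the support. This step is the heart of the argument, and the main point is to find this reduction to a single weight.

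\emph{Limit $c\to+\infty$.} Here $\phi_c=e^{-c}e^{-|p|^2}(1+\hbar e^{-|p|^2-c})^{-1}$. Hence $\beta_k(c)\sim e^{-c}\int|p|^ke^{-|p|^2}\dpp$, and so $\beta(c)\sim e^{-c}e^{3c/5}\cdot\text{const}=\text{const}\cdot e^{-2c/5}\to0$.

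\emph{Limit $c\to-\infty$.} Put $\mu=-c$ and rescale $p=\sqrt\mu\,q$:
\[
\beta_k=\mu^{(k+3)/2}\int|q|^k\big(e^{\mu(|q|^2-1)}+\hbar\big)^{-1}\dd q .
\]
The integrand tends to $\hbar^{-1}\one_{\{|q|<1\}}$ pointwise off the unit sphere. It is dominated by $\hbar^{-1}$ on $\{|q|\le 2\}$ and by $e^{-(|q|^2-1)}$ outside it once $\mu\ge1$. Therefore
\[
\beta_0\sim\mu^{3/2}\,\frac{4\pi}{3\hbar},\qquad \beta_2\sim\mu^{5/2}\,\frac{4\pi}{5\hbar},
\]
and
\[
\beta\to \frac{4\pi}{3\hbar}\Big(\frac{5\hbar}{4\pi}\Big)^{3/5}=\frac{5^{3/5}(4\pi)^{2/5}}{3\hbar^{2/5}} .
\]
Finally, continuity together with strict monotonicity and the two limits gives, by the intermediate value theorem, that $\beta$ is a bijection onto $(0,\beta(-\infty))$.
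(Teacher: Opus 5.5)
Your proof is correct, but it takes a different route from the paper. The paper does not establish these properties directly. It notes the scaling $\beta_k^{\hbar}(c)=\hbar^{-1}\beta_k^{1}(c-\log\hbar)$, hence $\beta^{\hbar}(c)=\hbar^{-2/5}\beta^{1}(c-\log\hbar)$. It then imports the endpoint limits, strict monotonicity, and invertibility for $\hbar=1$ from \cite{BY20}.

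You instead argue directly for each $\hbar>0$. Your key step is the identity $\partial_c\phi_c=\frac{1}{2r}\partial_r\phi_c$ followed by integration by parts. This writes $\beta_0,\beta_2,\beta_0',\beta_2'$ as multiples of the moments $m_4,m_6,m_2,m_4$ of the single positive weight $w=-\partial_c\phi_c$. Strict decrease then reduces to the strict Cauchy--Schwarz inequality $m_4^2<m_2m_6$. The constants $\frac{8\pi}{3}$, $\frac{8\pi}{5}$, $-4\pi$ and the resulting inequality $-\frac{3m_2}{2m_4}<-\frac{3m_4}{2m_6}$ are all correct.

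Your $c\to-\infty$ computation is the same rescaling and dominated-convergence argument the paper uses later in Lemma \ref{lem:beta0-ze-t}. Your $c\to+\infty$ limit is a direct comparison with the Maxwellian profile $e^{-c}e^{-|p|^2}$.

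The paper's route is shorter and makes the $\hbar$-dependence explicit through scaling, but it rests on an external reference. Yours is self-contained and gives a transparent mechanism for the monotonicity. It uses only that the profile is of the form $G(|p|^2+c)$ with $G'<0$ and enough decay, so it applies unchanged to other such profiles.

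The steps you leave implicit are routine. Differentiation under the integral sign is justified by $0<w\le\phi_c\le e^{-r^2-c_1}$ for $c\ge c_1$. The boundary terms $r^{k+3}\phi_c$ vanish at $r=0$ and $r=\infty$.
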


\begin{proof}
For $\hbar=1$, the endpoint values, strict monotonicity, and the resulting invertibility were established in \cite[Lemma 2.1, Proposition 2.1, and Theorem 2.2]{BY20}. For general $\hbar>0$, we observe
\[
 \beta^{\hbar}_{k}(c)  =\frac1\hbar \beta_{k}^1(c-\log\hbar),
\]
and thus
\[
 \beta^{\hbar}(c) =\hbar^{-\frac25}\beta^1(c-\log\hbar).
\]
The conclusion follows immediately from the $\hbar=1$ result, together with
\[
 \lim_{c\to-\infty}\beta^1(c) =\frac{5^{\frac35}(4\pi)^{\frac25}}{3}.
\]
\end{proof}
We remark that a direct computation gives the useful identity
\bq \label{eq:beta-kappa}
 \beta^{\hbar}(-\infty)^{-\frac53}=\kappah.
\eq

%
%
%
%
%
%

\subsection{Regularized macroscopic parameters}

We next introduce the regularized macroscopic quantities and the corresponding regularized Fermi--Dirac profile $\calF^\e(f)$.  The construction is designed  to meet two complementary requirements. For each fixed $\e>0$, the degeneracy parameter stays in a compact subset of $(0,\beta(-\infty))$ so that the resulting collision map is globally Lipschitz in $L^1_2$.  At the same time, the regularized equilibrium satisfies moment estimates that are uniform as $\e\to0$.  The regularized profile has slightly perturbed moments, and Proposition \ref{prop:consist} shows that these moment defects vanish as $\e\to0$. Throughout this subsection, $0<\e<\frac12$ and we will say that $f=f(p)$ is \emph{Pauli-admissible} if $0\le f \le \frac1\hbar$ and $(1+|p|^2)f\in L^1_p(\R^3)$. 

For a Pauli-admissible $f=f(p)$, let us define
\bq \label{eq:reg-N-u}
 N_f^\e:=\frac{N_f}{1+\e N_f},  \quad  u_f^\e:=\frac{P_f}{N_f+\e(1+|P_f|)}.
\eq
For brevity in the calculations below we write $N^\e=N_f^\e$, $u^\e=u_f^\e$, and $(N,P,E)=(N_f,P_f,E_f)$. Set
\[
 D^\e:=N+\e(N+E),
\]
and, whenever $N+E>0$,
\[
 S^\e:=E-\frac{|P|^2}{D^\e}+\e(N+E),  \quad  \widetilde B^\e:=\frac{N}{(S^\e)^{\frac35}}.
\]
At $(N,P,E)=(0,0,0)$, set $\widetilde B^\e=0$.  The advantage of this choice of $\widetilde B^\e$ is that its denominator now satisfies 
\bq \label{eq:S-bds}
 \e(N+E)\leq S^\e\leq(1+\e)(N+E).
\eq
Here, the lower bound comes from the fact that $|P|^2\leq NE$. 

The quantity $S^\e$ still degenerates at the vacuum state $(N,P,E)=(0,0,0)$. Thus, we regularize the degeneracy parameter near vacuum by interpolating toward its upper truncation level. More precisely, choose
$\chi\in C^\infty([0,\infty))$ satisfying
\[
 0\leq\chi\leq1,  \quad  \chi(z)=0\ \text{for }0\leq z\leq1,  \quad  \chi(z)=1\ \text{for }z\geq2,
\]
and write $\chi_\e(z)=\chi(z/\e)$. First truncate the range by
\bq \label{eq:Bhat}
 \overline{B^\e}  :=\max\lt\{  \e\beta(-\infty),  \min\{(1-\e)\beta(-\infty),\widetilde B^\e\}  \rt\},
\eq
and then set
\[
 \be(N,P,E)  :=\chi_\e(N+E)\overline{B^\e}  +(1-\chi_\e(N+E))(1-\e)\beta(-\infty).
\]
Thus
\bq \label{eq:Beps-range}
 \e\beta(-\infty)\leq\be\leq(1-\e)\beta(-\infty).
\eq
We then define
\begin{equation*}
 c^\e:=\beta^{-1}(\be).
\end{equation*}
The important feature of the range truncation is that it keeps the argument of $\beta^{-1}$ uniformly away from both endpoints of the interval $(0,\beta(-\infty))$.  Figure \ref{fig:Beps-cons} gives a schematic picture.  If
\[
 c_-^\e:=\beta^{-1}((1-\e)\beta(-\infty))  \quad \text{and} \quad   c_+^\e:=\beta^{-1}(\e\beta(-\infty)),
\]
then \eqref{eq:Bhat} forces $\overline{B^\e}$ into the shaded horizontal strip and hence confines $c^\e$ to the compact interval $[c_-^\e,c_+^\e]$.

\begin{figure}[H]

\centering
\begin{tikzpicture}[x=1.15cm,y=1.05cm,>=Latex]
  \fill[black!6] (-4.75,0.82) rectangle (4.35,3.18);

  \draw[->] (-4.95,0) -- (4.65,0) node[right] {$c$};

  \draw[thick,domain=-4.55:4.30,samples=100,smooth,variable=\x]
  plot
  ({\x},{4/(1+exp(0.6203*(\x+0.315)))});

\node[anchor=south west] at (0.65,1.25) {$y=\beta(c)$};

  \draw[densely dashed] (-4.75,4.0) -- (4.15,4.0);
  \draw[dashed] (-4.75,3.18) -- (4.15,3.18);
  \draw[dashed] (-4.75,0.82) -- (4.15,0.82);
  \node[anchor=east] at (-4.82,4.0) {$\beta(-\infty)$};
  \node[anchor=east] at (-4.82,3.18) {$(1-\e)\beta(-\infty)$};
  \node[anchor=east] at (-4.82,0.82) {$\e\beta(-\infty)$};
  \node[anchor=east] at (-4.82,0.06) {$0$};

  \draw[<->] (4.02,3.18) -- (4.02,4.0);
  \node[anchor=west] at (4.10,3.59) {$\e\beta(-\infty)$};
  \draw[<->] (4.02,0.02) -- (4.02,0.82);
  \node[anchor=west] at (4.10,0.40) {$\e\beta(-\infty)$};

  \draw[densely dashed] (-2.50,0) -- (-2.50,3.18);
  \draw[densely dashed] (1.87,0) -- (1.87,0.82);
  \fill (-2.50,3.18) circle (1.4pt);
  \fill (1.87,0.82) circle (1.4pt);
  \node[anchor=north] at (-2.50,-0.04) {$c_-^\e$};
  \node[anchor=north] at (1.87,-0.04) {$c_+^\e$};

  \draw[<->,thick] (-2.50,-0.55) -- (1.87,-0.55);
  \node at (0.06,-0.88) {$c^\e\in[c_-^\e,c_+^\e]$};
  \node[align=center] at (1.75,2.52)
    {$\e\beta(-\infty)\leq B^\e\leq(1-\e)\beta(-\infty)$};
\end{tikzpicture}
\caption{Schematic range regularization of the degeneracy parameter.  The truncation keeps $B^\e$ in a compact subinterval of $(0,\beta(-\infty))$, equivalently keeping $c^\e=\beta^{-1}(B^\e)$ in a compact interval.}
\label{fig:Beps-cons}

\end{figure}

Finally, when $N>0$, we set
\begin{equation}\label{eq/areg}
a^\e:=\beta_0(c^\e)^{\frac23}(N^\e)^{-\frac23}
\end{equation}
and
\bq \label{eq:Feps}
 \calF^\e(f)(p) :=\frac1{e^{a^\e|p-u^\e|^2+c^\e}+\hbar}.
\eq
If $N=0$, define $\calF^\e(f)=0$.  This last convention is consistent with the limit $N^\e\downarrow0$ in $L^1(\R^3_p)$.

The range truncation and the vacuum interpolation play complementary roles. The former controls the inverse moment map, while the latter removes the singularity at vacuum without losing uniform moment control. In particular, the regularization acts at the level of the macroscopic parameters, so that the approximate profile remains within the Fermi--Dirac family. Exact moment matching is not imposed at this stage: by \eqref{eq:FD-mom-com},
\bq \label{eq:Feps-mom}
 N_{\calF^\e(f)}=N^\e,\quad  P_{\calF^\e(f)}=N^\e u^\e,\quad  E_{\calF^\e(f)}=(\be)^{-\frac53}(N^\e)^{\frac53}  +N^\e|u^\e|^2.
\eq

\begin{remark} 
The interpolation near vacuum is introduced not merely to remove the degeneracy of the parametrization, but also to retain control of the equilibrium moments in terms of the input moments. For example, simply replacing $S^\e$ by $S^\e+\e$ and setting
\[
 B_{\rm add}^\e:=\frac{N}{(S^\e+\e)^{\frac35}}
\]
would give
\[
 (B_{\rm add}^\e)^{-\frac53}(N^\e)^{\frac53}
 =
 \frac{S^\e+\e}{(1+\e N)^{\frac53}}.
\]
For fixed $\e>0$, this quantity tends to $\e$, rather than to zero, as $(N,P,E)$ approaches vacuum through $N>0$. Hence this regularization alone does not yield an estimate of the form
\[
 E_{\calF^\e(f)}\le C_\hbar(N_f+E_f),
\]
which is needed uniformly in $\e$ for the compactness argument.

The interpolation in the definition of $B^\e$ avoids this nonvanishing energy contribution near vacuum while retaining the lower bound on $B^\e$ required for the uniform moment estimate below.
\end{remark}

\begin{lemma}\label{lem:unif-e}
There is a constant $C_\hbar>0$, independent of $\e\in(0,\frac12)$ and of $f$, such that every Pauli-admissible $f$ satisfies
\[
 \int_{\R^3}(1+|p|^2)\calF^\e(f)(p)\,\dpp  \leq C_\hbar(N_f+E_f).
\]
\end{lemma}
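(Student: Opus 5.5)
Our plan is to read off all three moments of $\calF^\e(f)$ directly from the explicit formulas \eqref{eq:Feps-mom} and bound each by $N+E$, writing $(N,P,E)=(N_f,P_f,E_f)$ throughout. If $N=0$, then $f=0$ a.e. and $\calF^\e(f)=0$, so nothing needs to be shown. For $N>0$, \eqref{eq:Feps-mom} gives
\[
\int_{\R^3}(1+|p|^2)\calF^\e(f)\,\dpp = N^\e + (\be)^{-\frac53}(N^\e)^{\frac53} + N^\e|u^\e|^2 .
\]
We will estimate the three terms separately.

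The first term is immediate from $N^\e\le N$. For the third, note that $N+\e(1+|P|)\ge N$, so $|u^\e|\le |P|/N$. Combined with $N^\e\le N$ and the Cauchy--Schwarz bound $|P|^2\le NE$, this gives
\[
N^\e|u^\e|^2\le |P|^2/N\le E .
\]

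The second term is the main obstacle, because it needs a lower bound on $\be$ that is uniform in $\e$. This is exactly what the vacuum interpolation was built to supply. We split according to the value of $\chi_\e(N+E)$.

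\textbf{Case $\chi_\e(N+E)=1$.} Here $\be=\overline{B^\e}$. Suppose first that $\overline{B^\e}\ge\widetilde B^\e$ (the lower truncation is active or nothing is truncated). Then $(\be)^{-\frac53}\le (\widetilde B^\e)^{-\frac53}=S^\e/N^{\frac53}$, and \eqref{eq:S-bds} together with $N^\e\le N$ yields
\[
(\be)^{-\frac53}(N^\e)^{\frac53}\le S^\e\le \tfrac32(N+E).
\]
Otherwise only the upper truncation is active, so $\be=(1-\e)\beta(-\infty)\ge\frac12\beta(-\infty)$. Using \eqref{eq:beta-kappa},
\[
(\be)^{-\frac53}(N^\e)^{\frac53}\le 2^{\frac53}\kappah N^{\frac53}.
\]
Since $f$ is Pauli-admissible, Lemma \ref{lem:sharp-pauli} gives $\kappah N^{\frac53}\le E-|P|^2/N\le E$, so this term is at most $2^{\frac53}E$.

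\textbf{General case.} Here $\be$ is a convex combination of $\overline{B^\e}$ and $(1-\e)\beta(-\infty)$. The map $z\mapsto z^{-\frac53}$ is convex and decreasing, so a convex combination of $z^{-5/3}$ bounds would be too crude. Instead we use $\be\ge\min\{\overline{B^\e},(1-\e)\beta(-\infty)\}$, which gives
\[
(\be)^{-\frac53}\le \max\bigl\{(\overline{B^\e})^{-\frac53},((1-\e)\beta(-\infty))^{-\frac53}\bigr\}.
\]
Each of the two alternatives is controlled exactly as in the previous case. This also covers $N+E\le\e$, where $\be=(1-\e)\beta(-\infty)$.

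Collecting the three estimates gives the lemma with $C_\hbar=3+2^{\frac53}$. In this form the constant does not in fact depend on $\hbar$, because $\hbar$ enters only through $\kappah=\beta(-\infty)^{-5/3}$ and the sharp Pauli inequality absorbs it.
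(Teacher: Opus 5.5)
Your proof is correct and rests on the same two inputs as the paper's proof: the bound $S^\e\le(1+\e)(N+E)$ from \eqref{eq:S-bds} and the sharp Pauli inequality $\kappah N^{\frac53}\le E-|P|^2/N$. The paper packages your case split into the single lower bound $\be\ge C_0\,N/(N+E)^{\frac35}$, since both $\widetilde B^\e$ and $(1-\e)\beta(-\infty)$ dominate $\frac12 N/(N+E)^{\frac35}$ and this survives the max--min truncation and the convex interpolation; that route also gives an $\hbar$-independent constant. One aside needs correcting: Jensen's inequality is not too crude, since convexity of $z\mapsto z^{-\frac53}$ gives exactly the needed bound $(\be)^{-\frac53}\le\chi_\e(N+E)\,(\overline{B^\e})^{-\frac53}+(1-\chi_\e(N+E))\,((1-\e)\beta(-\infty))^{-\frac53}$, though your $\min$ argument works equally well.
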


\begin{proof}
By Lemma \ref{lem:sharp-pauli}, whenever $N>0$,
\begin{equation}\label{eq/ easy}
 \frac{N}{(N+E)^{\frac35}}  \leq \frac{N}{(E-|P|^2/N)^{\frac35}}  \leq\beta(-\infty).
\end{equation}
The assertion is trivial if $N=0$.  From \eqref{eq:S-bds},
\[
 \widetilde B^\e  \geq(1+\e)^{-\frac35}\frac{N}{(N+E)^{\frac35}}.
\]
The upper truncation value satisfies, simply because of \eqref{eq/ easy},
\[
 (1-\e)\beta(-\infty) \geq\frac12\frac{N}{(N+E)^{\frac35}}.
\]
Together, we see that the range-truncated $\overline{B^\e}$, and then also the convex combination $\be$, satisfy
\bq \label{eq:B-lower}
 \be(N,P,E)  \geq C_0\frac{N}{(N+E)^{\frac35}}
\eq
with a universal $C_0>0$, independent of $\e\in(0,\frac12)$.

Using \eqref{eq:Feps-mom}, $N^\e\leq N$, and \eqref{eq:B-lower},
\[
 (\be)^{-\frac53}(N^\e)^{\frac53}  \leq C(N+E).
\]
Moreover, for $N>0$, $|u^\e|\leq\frac{|P|}{N}$, and thus
\begin{equation}\label{eq:E-lower}
 N^\e|u^\e|^2 \leq\frac{|P|^2}{N}\leq E.
\end{equation}
This together with the following identity
\[
 \int_{\R^3}(1+|p|^2)\calF^\e(f)(p)\,\dpp = N^\e + (\be)^{-\frac53}(N^\e)^{\frac53} +  N^\e|u^\e|^2
\]
gives the desired result.  The case $N=0$ is immediate from the definition. This completes the proof.
\end{proof}

For fixed $\e>0$, the regularization also restores the Lipschitz property required for the Picard construction.

\begin{lemma}\label{lem:fix-e-Lip}
For each fixed $\e\in(0,\frac12)$, there exists $C_{\e,\hbar}<\infty$ such that, whenever $f,g$ are Pauli-admissible,
\[
 \|\calF^\e(f)-\calF^\e(g)\|_{L^1_2(\R^3_p)} \leq C_{\e,\hbar}\|f-g\|_{L^1_2(\R^3_p)}.
\]
\end{lemma}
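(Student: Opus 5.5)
The plan is to factor $\calF^\e$ through finitely many macroscopic parameters and to prove a Lipschitz bound for each factor separately. Write $m(f):=(N_f,P_f,E_f)\in\R^5$. Since $|N_f-N_g|+|P_f-P_g|+|E_f-E_g|\leq C\|f-g\|_{L^1_2(\R^3_p)}$ (using $|p|\leq 1+|p|^2$), the moment map $f\mapsto m(f)$ is Lipschitz. It therefore suffices to show two things:
\begin{enumerate}[label=(\roman*)]
\item the map $m\mapsto (N^\e,u^\e,c^\e)$ is Lipschitz on the admissible cone $K:=\{N\geq0,\ E\geq0,\ |P|^2\leq NE\}$;
\item the map $\Phi:(N',u,c)\mapsto \big(e^{a|p-u|^2+c}+\hbar\big)^{-1}$, with $a=\beta_0(c)^{\frac23}(N')^{-\frac23}$, is Lipschitz into $L^1_2(\R^3_p)$ on the parameter range $0<N'\leq \e^{-1}$, $|u|\leq \e^{-1}$, $c\in[c_-^\e,c_+^\e]$.
\end{enumerate}
The parameters do lie in this range: $N^\e\leq\e^{-1}$, $|u^\e|\leq |P|/(\e(1+|P|))\leq\e^{-1}$, and \eqref{eq:Beps-range} gives $c^\e\in[c_-^\e,c_+^\e]$. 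The cone $K$ is convex (it is a second-order cone), so in both steps I would bound gradients on the interior and integrate along segments. Points with $N=0$ are reached by continuity, since $\calF^\e(f)\to0$ in $L^1_2$ as $N^\e\downarrow0$.

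For step (i), the maps $N^\e=N/(1+\e N)$ and $u^\e=P/(N+\e(1+|P|))$ have denominators bounded below by $1$ and $\e$ respectively, so they are globally Lipschitz with constants depending on $\e$. For $c^\e=\beta^{-1}(\be)$ I would argue in three stages.
\begin{itemize}
\item On the compact interval $[c_-^\e,c_+^\e]$, the function $\beta$ is $C^1$ (differentiate under the integral) with $\beta'<0$. I expect $\beta'\neq0$ to come from the computation in \cite{BY20}, or from a direct Cauchy--Schwarz-type argument. Hence $\beta^{-1}$ is Lipschitz on $[\e\beta(-\infty),(1-\e)\beta(-\infty)]$.
\item The truncation by max/min is $1$-Lipschitz, and $\chi_\e$ is smooth.
\item It remains to control $\widetilde B^\e=N(S^\e)^{-3/5}$ on $\{N+E\geq\e\}$, which is the only region where it is used. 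Here $S^\e\geq\e(N+E)\geq\e^2$ by \eqref{eq:S-bds}, $D^\e\geq\e(N+E)$, and $|P|\leq\sqrt{NE}\leq N+E$. Consequently $|P|/D^\e\leq\e^{-1}$, so $\nabla S^\e$ is bounded. Moreover $N/S^\e\leq\e^{-1}$, so $\nabla\widetilde B^\e=(S^\e)^{-3/5}\nabla N-\tfrac35 N(S^\e)^{-8/5}\nabla S^\e$ is bounded by a constant $C_\e$.
\end{itemize}
On $\{N+E<\e\}$ we have $\be\equiv(1-\e)\beta(-\infty)$, so $\be$ is Lipschitz on all of $K$.

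For step (ii), I would use the scaling $q=\sqrt a(p-u)$ and differentiate $\Phi$ in each parameter. The bound for the $u$-derivative is
\[
\|\pa_u\Phi\|_{L^1_2}\lesssim a^{-1}(1+|u|^2+a^{-1}),
\]
which is bounded because $a^{-1}\sim (N')^{2/3}\leq \e^{-2/3}$. The bound for the $N'$-derivative uses $\pa_{N'}a=-\tfrac23 a/N'$ and reads
\[
\|\pa_{N'}\Phi\|_{L^1_2}\lesssim (N')^{-1}a^{-3/2}(1+|u|^2+a^{-1})\sim \beta_0(c)^{-1}(1+\dots),
\]
which is bounded uniformly as $N'\downarrow0$. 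The $c$-derivative contributes terms with $\beta_0',\beta_2'$, which are bounded on the compact $c$-interval. The main obstacle is this step: checking that none of the derivative bounds blows up as $N'\to0$, where $a\to\infty$. The scaling above indicates that each derivative picks up exactly the power of $N'$ needed to cancel the singularity.

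Finally, if $N_g=0$, then $g=0$ a.e. and $\calF^\e(g)=0$. In that case Lemma \ref{lem:unif-e} gives
\[
\|\calF^\e(f)\|_{L^1_2}\leq C_\hbar(N_f+E_f)=C_\hbar\|f-g\|_{L^1_2},
\]
which covers the vacuum case directly and completes the argument.
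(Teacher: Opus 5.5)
Your proposal is correct and follows essentially the same route as the paper. The chain is the same: Lipschitz dependence of the moments; then of $(N^\varepsilon,u^\varepsilon,c^\varepsilon)$, via gradient bounds for $S^\varepsilon$ and $\widetilde B^\varepsilon$ on $\{N+E\geq\varepsilon\}$, the one-Lipschitz truncations, the cutoff $\chi_\varepsilon$, and the inverse-function bound for $\beta^{-1}$ on the compact range; then rescaled $L^1_2$ derivative bounds for the Fermi--Dirac profile that stay uniform as $N\downarrow0$; and the vacuum case via the uniform moment bound.

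Your suggested Cauchy--Schwarz route to $\beta'<0$, which the paper only asserts, does work. Set $L_k:=\int_{\R^3}|p|^k e^{|p|^2+c}(e^{|p|^2+c}+\hbar)^{-2}\,\dpp$. Integration by parts gives $\beta_0=\frac23L_2$, $\beta_2=\frac25L_4$, $\beta_0'=-L_0$ and $\beta_2'=-L_2$, hence $(\log\beta)'=\frac32(L_2^2-L_0L_4)/(L_2L_4)<0$.
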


\begin{proof}
If $N_f=0$ or $N_g=0$, then the assertion becomes trivial due to \eqref{eq:Feps-mom},  $N^\e\leq N$, and \eqref{eq:E-lower}. Let $N_f$ and $N_g$ be non-zero and set 
\[
 (N,P,E):=(N_f,P_f,E_f),  \quad  (N',P',E'):=(N_g,P_g,E_g).
\]
Since
\[
 |N-N'|+|P-P'|+|E-E'| \leq C\|f-g\|_{L^1_2(\R^3_p)},
\]
it suffices to establish Lipschitz dependence of the regularized parameters on the moments and then of the Fermi--Dirac profile on these parameters.

From \eqref{eq:reg-N-u},
\[
 |N_f^\e-N_g^\e|  =\frac{|N-N'|}{(1+\e N)(1+\e N')}  \leq |N-N'|.
\]
For the regularized velocity, set
\[
 d:=N+\e(1+|P|),  \quad  d':=N'+\e(1+|P'|).
\]
Since $d,d'\geq\e$ and $|P'|/d'\leq\frac1\e$, we obtain
\begin{align*}
 |u_f^\e-u_g^\e|  &\leq  \frac{|P-P'|}{d}  +\frac{|P'|}{d'}\frac{|d-d'|}{d} \\
 &\leq  \frac1\e|P-P'|  +\e^{-2}\lt(|N-N'|+\e|P-P'|\rt) \\
 &\leq  \e^{-2}|N-N'|  +2\frac1\e|P-P'|.
\end{align*}

We next estimate the regularized degeneracy parameter $B^\e$. On the region $N+E\geq\e$, write
\[
 D^\e=(1+\e)N+\e E,  \quad  S^\e  =E-\frac{|P|^2}{D^\e}+\e(N+E).
\]
The physical moment inequality $ |P|^2\leq NE$ implies
\[
 |P|\leq\sqrt{NE}\leq\frac{N+E}{2},
\]
while
\[
 D^\e\geq\e(N+E),  \quad  S^\e\geq\e(N+E)\geq\e^2.
\]
A direct differentiation gives
\[
 \pa_N S^\e  =  \frac{(1+\e)|P|^2}{(D^\e)^2}+\e,  \quad  \nabla_P S^\e  =  -\frac{2P}{D^\e}, \quad  \pa_E S^\e  =  1+\e+\frac{\e|P|^2}{(D^\e)^2}.
\]
Using these bounds, we find
\bq \label{eq:Seps-deri-bd}
 |\pa_N S^\e|  +|\nabla_P S^\e|  +|\pa_E S^\e|  \leq C_\e  \quad\text{on }\{N+E\geq\e\}.
\eq

Since $N\leq N+E\leq\frac1\e S^\e$, differentiating $\widetilde B^\e  =N(S^\e)^{-\frac35}$ yields
\[
 \pa_N\widetilde B^\e   =  (S^\e)^{-\frac35}  -\frac35N(S^\e)^{-8/5}\pa_NS^\e,\quad \nabla_P\widetilde B^\e =  -\frac35N(S^\e)^{-8/5}\nabla_PS^\e,\quad  \pa_E\widetilde B^\e =  -\frac35N(S^\e)^{-8/5}\pa_ES^\e.
\]
Hence \eqref{eq:Seps-deri-bd}, together with $S^\e\geq\e^2$, gives
\bq \label{eq:Btilde-deri-bd}
 |\pa_N\widetilde B^\e|  +|\nabla_P\widetilde B^\e|  +|\pa_E\widetilde B^\e|  \leq C_\e  \quad\text{on }\{N+E\geq\e\}.
\eq

The max--min maps used in the range truncation \eqref{eq:Bhat} are one-Lipschitz.  In the transition region $ \e\leq N+E\leq2\e$, we also have
\[
 |\chi_\e'|\leq\frac{C}{\e},
\]
while both $\overline{B^\e}$ and $(1-\e)\beta(-\infty)$ are bounded by constants depending only on $\e$ and $\hbar$.  On $N+E\leq\e$, the function $B^\e$ is constant.   Together with \eqref{eq:Btilde-deri-bd}, this yields a bound $C_{\e,\hbar}$ for the derivatives of $B^\e$ wherever they exist.

Now let
\[
 Z_\theta  :=  (1-\theta)(N,P,E)+\theta(N',P',E'),  \quad 0\leq\theta\leq1.
\]
Since $Z_\theta$ is the moment triple of $(1-\theta)f+\theta g$, the whole segment remains in the physical moment set.  Integrating the derivative bound along this segment, piecewise across the truncation interfaces, gives
\bq\label{eq:Beps-L-mom}
 |B^\e(N,P,E)-B^\e(N',P',E')|  \leq C_{\e,\hbar}  \lt(|N-N'|+|P-P'|+|E-E'|\rt).
\eq

By \eqref{eq:Beps-range},
\[
 c^\e\in I_\e  :=  \lt[  \beta^{-1}((1-\e)\beta(-\infty)),  \beta^{-1}(\e\beta(-\infty))  \rt].
\]
Since $\beta'$ is continuous and strictly negative,
\[
 \inf_{c\in I_\e}|\beta'(c)|>0,
\]
and hence the inverse function theorem gives
\[
 |\beta^{-1}(z)-\beta^{-1}(z')|  \leq C_{\e,\hbar}|z-z'| \quad \text{for } z,z'\in[\e\beta(-\infty),(1-\e)\beta(-\infty)].
\]
Combining this with \eqref{eq:Beps-L-mom}, we obtain
\[
 |c_f^\e-c_g^\e|  \leq C_{\e,\hbar}  \lt(  |N-N'|+|P-P'|+|E-E'|  \rt).
\]

It remains to estimate the dependence of the Fermi--Dirac profile on these regularized parameters.  For $N>0$, $c\in I_\e$, and $u\in\R^3$, define
\[
 \Phi(N,c,u)(p)  :=  \frac1{  \exp\lt(\beta_0(c)^{\frac23}N^{-\frac23}|p-u|^2+c\rt)  +\hbar}.
\]
Due to the definition of $\calF^\e$, we only need consider parameters satisfying
\[
 0<N\leq\frac1\e,  \quad  |u|\leq\frac1\e.
\]
Set
\[
 Q  :=  \beta_0(c)^{\frac23}N^{-\frac23}|p-u|^2+c,  \quad  \Lambda(Q)  :=  \frac{e^Q}{(e^Q+\hbar)^2}.
\]
A direct differentiation gives
\begin{equation}\label{eq/ dir diff}
\begin{aligned}
 \pa_N\Phi  &=  \frac23\Lambda(Q)  \beta_0(c)^{\frac23}N^{-\frac53}|p-u|^2,\quad  \nabla_u\Phi  =  2\Lambda(Q)  (\beta_0(c))^{\frac23}N^{-\frac23}(p-u),\\
 \pa_c\Phi  &=  -\Lambda(Q)  \lt(  1+  \frac23\frac{\beta_0'(c)}{\beta_0(c)}  (\beta_0(c))^{\frac23}N^{-\frac23}|p-u|^2  \rt).
\end{aligned}
\end{equation}

We now claim that
\bq \label{eq:Phi-deri}
 \|\pa_N\Phi\|_{L^1_2(\R^3_p)}  +\|\nabla_u\Phi\|_{L^1_2(\R^3_p)}  +\|\pa_c\Phi\|_{L^1_2(\R^3_p)}  \leq C_{\e,\hbar}.
\eq
Indeed, making the change of variables
\[
 p  =  u+N^{\frac13}\beta_0(c)^{-\frac13}v,  \quad  \dpp  =  N\beta_0(c)^{-1}\,\dd v,
\]
we have
\[
 Q=|v|^2+c \quad \text{and} \quad N^{-\frac53} |p-u|^2 \,\dpp = \beta_0(c)^{-\frac{5}{3}} |v|^2 \, \mathrm{d}v.
\]
Thus, after the above change of variables, the $N$-factors in the integrals of \eqref{eq/ dir diff} are respectively of order $1$, $N^{\frac23}$, and $N$. Moreover, since $c\in I_\e$, both $\beta_0(c)$ and $\beta_0'(c)/\beta_0(c)$ are uniformly bounded on $I_\e$, with $\beta_0(c)$ bounded away from zero; further, $|u|\leq\frac1\e$, and $\Lambda(|v|^2+c)$ has Gaussian decay uniformly for $c\in I_\e$. This proves \eqref{eq:Phi-deri}. In particular, the estimate for $\pa_N\Phi$ remains uniform as $N\downarrow0$, while the other two terms vanish in that limit; so $\Phi$ extends continuously to $N=0$ by setting
\[
 \Phi(0,c,u):=0.
\]

Integrating the derivative bounds in \eqref{eq:Phi-deri} successively along the $N$-, $c$-, and $u$-segments, we obtain
\[
 \|\Phi(N,c,u)-\Phi(N',c',u')\|_{L^1_2(\R^3_p)}  \leq  C_{\e,\hbar}  \lt(  |N-N'|+|c-c'|+|u-u'|  \rt).
\]
Since
\[
 \calF^\e(f)  =  \Phi(N_f^\e,c_f^\e,u_f^\e),  \quad  \calF^\e(g)  =  \Phi(N_g^\e,c_g^\e,u_g^\e),
\]
combining all of the above estimates yields the desired result.
\end{proof}

%
%
%
%
%
%

\subsection{The equilibrium map on the admissible moment set}\label{sec:ext-equil}

 In this section, we investigate the consistency of the regularization $\calF^\e$ as $\e\downarrow 0$. Toward this end, we need to first show that it is possible to match, to each Pauli-admissible $f$, a unique local equilibrium $\calF(f)$ which is determined by its moments. As we will see, the ordinary Fermi-Dirac distribution will be its interior representation, whereas its saturated form will be the boundary representation.

Indeed, for $N>0$, set
\[
 E_\star(N,P):=\frac{|P|^2}{N}+\kappah N^{\frac53}
\]
and define the non-vacuum admissible moment set
\[
 \mathfrak M_\hbar :=\{(N,P,E):N>0,\ E \ge E_\star(N,P)\}.
\] 
By Lemma \ref{lem:sharp-pauli}, every non-vacuum moment triple arising from a function satisfying the Pauli bound and having finite second moments belongs to $\mathfrak M_\hbar$.  Conversely, Proposition \ref{prop:vari} below constructs a Pauli-admissible distribution for every triple in $\mathfrak M_\hbar$.  The vacuum triple is $(0,0,0)$; it will always be handled separately by setting the local equilibrium equal to zero.

For $(N,P,E)\in\mathfrak M_\hbar$, let
\[
 \calC_\hbar(N,P,E)  :=\lt\{  g: 0\leq g\leq\frac1\hbar ,\ (1+|p|^2)g\in L^1(\R^3),\  \int_{\R^3}(1,p,|p|^2)g\,\dpp=(N,P,E)  \rt\}.
\]
We also recall the Fermi-Dirac entropy density
\[
 \eta_\hbar(z) =z\log z+\frac1\hbar(1-\hbar z)\log(1-\hbar z),  \quad 0\leq z\leq\frac1\hbar ,
\]
with the convention $0\log0=0$. Now using the preceding moment characterization, let us define the exact equilibrium intrinsically, including the saturation boundary, as the unique minimizer of the Fermi-Dirac entropy.

\begin{proposition}\label{prop:vari}
For every $(N,P,E)\in\mathfrak M_\hbar$, the problem
\[
 \inf_{g\in\calC_\hbar(N,P,E)}  \int_{\R^3}\eta_\hbar(g(p))\,\dpp
\]
has a unique minimizer, denoted by $\calF[N,P,E]$.  If $E>E_\star(N,P)$, then
\bq \label{eq:inte-FD}
 \calF[N,P,E](p)  =\frac1{e^{a|p-u|^2+c}+\hbar},  \quad  u=\frac PN,
\eq
where $c\in\R$ and $a>0$ are determined by
\[
 \beta(c)=B(N,P,E),  \quad  a=\beta_0(c)^{\frac23}N^{-\frac23}.
\]
If $E=E_\star(N,P)$, then
\bq \label{eq:satu-FD}
 \calF[N,P,E](p)  =\frac1\hbar\one_{\{|p-P/N|\leq R_N\}},  \quad  R_N=\lt(\frac{3\hbar N}{4\pi}\rt)^{\frac13}.
\eq
In both cases,
\[
 \int_{\R^3}(1,p,|p|^2)\calF[N,P,E](p)\,\dpp=(N,P,E).
\]
\end{proposition}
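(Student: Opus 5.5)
The proof splits into the two regimes. In each I exhibit the candidate profile, check that it lies in $\calC_\hbar(N,P,E)$, and then show by a convexity argument that it is the unique minimizer.

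\emph{Saturated case $E=E_\star(N,P)$.} By the equality clause of Lemma \ref{lem:sharp-pauli}, the set $\calC_\hbar(N,P,E)$ consists of the single function $g_\star=\frac1\hbar\one_{\{|p-P/N|\le R_N\}}$. Its mass is $N$, its momentum is $P$ by symmetry, and its energy is $\frac{|P|^2}{N}+\kappah N^{5/3}=E$. So $g_\star$ is trivially the unique minimizer, and its entropy is finite since $\eta_\hbar(0)=\eta_\hbar(1/\hbar)=0$.

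\emph{Finite-temperature case $E>E_\star(N,P)$.}

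1. Existence of the candidate. We have $\calE=E-|P|^2/N>\kappah N^{5/3}$, and $\kappah=\beta(-\infty)^{-5/3}$ by \eqref{eq:beta-kappa}. Hence $0<B(N,P,E)<\beta(-\infty)$, and Lemma \ref{lem:beta} gives a unique $c$ with $\beta(c)=B$. Set $a=\beta_0(c)^{2/3}N^{-2/3}$ and $F=F_{a,c,u}$ with $u=P/N$. By \eqref{eq:FD-mom}, $F$ has mass $N$ and momentum $P$, and its energy is $\beta(c)^{-5/3}N^{5/3}+|P|^2/N=\calE+|P|^2/N=E$. Moreover $0<F<1/\hbar$ and $F$ has Gaussian tails, so $F\in\calC_\hbar(N,P,E)$.

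2. Minimality. For $0<z<1/\hbar$ we have $\eta_\hbar'(z)=\log\frac{z}{1-\hbar z}$, so $\eta_\hbar'(F)=-a|p-u|^2-c$. This is a linear combination of the collision invariants $1,p,|p|^2$. For any $g\in\calC_\hbar$, convexity of $\eta_\hbar$ on $[0,1/\hbar]$ gives the pointwise inequality
\[
\eta_\hbar(g)\ge\eta_\hbar(F)+\eta_\hbar'(F)(g-F).
\]
Since $\eta_\hbar'(F)$ is finite, this holds even where $g\in\{0,1/\hbar\}$. The right-hand side integrates to $\int\eta_\hbar(F)$, because $\int(1,p,|p|^2)(g-F)\,\dpp=0$ and $|\eta_\hbar'(F)|\,|g-F|\le C(1+|p|^2)(g+F)$ is integrable.

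Two integrability points need care here.
\begin{enumerate}
\item[(i)] $\int\eta_\hbar(F)$ is finite. This holds since $\eta_\hbar(F)=F\eta_\hbar'(F)-\frac1\hbar\log(1+\hbar F)\cdot(\ldots)$ is dominated by $C(1+|p|^2)F$.
\item[(ii)] For general $g$, the negative part of $\eta_\hbar(g)$ is controlled. One has $\eta_\hbar(g)\ge g\log g\ge -C(1+|p|^2)g-Ce^{-|p|^2}$, so $\int\eta_\hbar(g)$ is well defined in $(-\infty,\infty]$.
\end{enumerate}
It follows that $\int\eta_\hbar(g)\ge\int\eta_\hbar(F)$, so the infimum is attained at $F$.

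3. Uniqueness. The function $\eta_\hbar$ is strictly convex on $[0,1/\hbar]$, since $\eta_\hbar''(z)=\frac{1}{z(1-\hbar z)}>0$. Therefore equality in the tangent inequality forces $g=F$ almost everywhere. This argument is independent of Lemma \ref{lem:beta}'s uniqueness of $c$, so the minimizer is unique and equals \eqref{eq:inte-FD}.

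The moment identities hold by construction in both cases. The only delicate step is the integrability in step 2, namely justifying that $\int\eta_\hbar'(F)(g-F)=0$ and that $\int\eta_\hbar(g)$ is well defined. This reduces to the second-moment bounds noted above, because $\eta_\hbar'(F)$ grows only quadratically in $p$.
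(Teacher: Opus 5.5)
Your proposal is correct and follows the paper's proof essentially step for step. The saturated case goes through the equality clause of Lemma~\ref{lem:sharp-pauli}, and the finite-temperature case uses the tangent-line inequality at $F$, where $\eta_\hbar'(F)=-a|p-u|^2-c$ is a combination of $1,p,|p|^2$, with strict convexity giving uniqueness. There are two small slips to fix: $\eta_\hbar(1/\hbar)=\hbar^{-1}\log(1/\hbar)\neq0$ for $\hbar\neq1$ (finiteness of the saturated entropy is still trivial), and since $\frac1\hbar(1-\hbar z)\log(1-\hbar z)\in[-z,0]$ you actually have $\eta_\hbar(g)\le g\log g$, so the lower bound in (ii) should read $\eta_\hbar(g)\ge g\log g-g$, which is exactly the bound the paper uses.
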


\begin{proof}
By Lemma \ref{lem:sharp-pauli} and \eqref{eq:beta-kappa},
\[
 E\geq E_\star(N,P)  \quad\Longleftrightarrow\quad  B(N,P,E)\leq\beta(-\infty).
\]
Assume first that $E>E_\star(N,P)$.  Lemma \ref{lem:beta} gives a unique $c\in\R$ satisfying $\beta(c)=B(N,P,E)$.  Let $F$ denote the function in \eqref{eq:inte-FD}.  With
\[
 p=u+N^{\frac13}\beta_0(c)^{-\frac13}v,
\]
the moment identities \eqref{eq:FD-mom-com}--\eqref{eq:FD-mom} give
\[
 \int_{\R^3}F\,\dpp=N,  \quad  \int_{\R^3}pF\,\dpp=P,
\]
and
\[
 \int_{\R^3}|p|^2F\,\dpp  =\beta(c)^{-\frac53}N^{\frac53}+N|u|^2=E.
\]
Thus $F\in\calC_\hbar(N,P,E)$.

For $z\in(0,\frac1\hbar )$,
\[
 \eta_\hbar'(z)=\log\frac{z}{1-\hbar z},  \quad  \eta_\hbar''(z)=\frac1{z(1-\hbar z)}>0.
\]
Moreover,
\[
 \eta_\hbar'(F(p))=-a|p-u|^2-c,
\]
which is a linear combination of $1,p_1,p_2,p_3,|p|^2$.  Hence, for every $g\in\calC_\hbar(N,P,E)$, convexity gives
\[
 \int_{\R^3}\lt(\eta_\hbar(g)-\eta_\hbar(F)\rt)\,\dpp \geq\int_{\R^3}\eta_\hbar'(F)(g-F)\,\dpp =0.
\]
The integrals are finite. For the second term in the entropy density,
\[
 -z\leq\frac1\hbar(1-\hbar z)\log(1-\hbar z)\leq0,
\]
while, with $V(p)=1+|p|^2$,
\[
 z\log z=z\log(ze^V)-Vz\geq-e^{-V-1}-Vz,
\]
and $(z\log z)^+\leq C_\hbar z$ on $[0,\frac1\hbar ]$.  Strict convexity yields uniqueness.

If $E=E_\star(N,P)$, Lemma \ref{lem:sharp-pauli} shows that $\calC_\hbar(N,P,E)$ consists of the single function in \eqref{eq:satu-FD}.  This completes the proof.
\end{proof}

For a Pauli-admissible function $f=f(p)$ with finite local second moments, define
\[
 \calF(f)(p)  :=\calF[N_f,P_f,E_f]  \quad\text{when }N_f>0,
\]
and set $\calF(f)=0$ when $N_f=0$. The latter definition is consistent since nonnegativity and zero mass imply $f\equiv 0$ almost everywhere. In the case where $f$ is dependent also on time and space, we define $\calF(f)(t,x,p)$ pointwisely for each $(t,x)$ in the natural way.

To analyze the equilibrium map near the saturation boundary, we need the asymptotic behavior of the finite-temperature parameters as $c\to-\infty$.  Indeed, approaching $B(N,P,E)=\beta(-\infty)$ from the interior corresponds precisely to this zero-temperature regime.  The following asymptotics will identify the limiting Fermi radius.

\begin{lemma}\label{lem:beta0-ze-t}
As $C\to+\infty$,
\[
 \beta_0(-C)=\frac{4\pi}{3\hbar}C^{\frac32}(1+o(1)), \quad \beta_2(-C)=\frac{4\pi}{5\hbar}C^{\frac52}(1+o(1)).
\]
\end{lemma}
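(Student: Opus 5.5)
We plan to reduce everything to one-dimensional integrals in polar coordinates and then apply dominated convergence after rescaling. Writing $p=|p|\omega$ and then $r=\sqrt{C}\,s$, we obtain
\[
 \beta_k(-C)=4\pi\int_0^\infty\frac{r^{k+2}}{e^{r^2-C}+\hbar}\,\dd r
 =4\pi C^{\frac{k+3}{2}}\int_0^\infty\frac{s^{k+2}}{e^{C(s^2-1)}+\hbar}\,\dd s .
\]
With this normalization the claim for $k=0$ and $k=2$ is
\[
 \int_0^\infty\frac{s^{k+2}}{e^{C(s^2-1)}+\hbar}\,\dd s\;\longrightarrow\;\frac1\hbar\int_0^1 s^{k+2}\,\dd s=\frac{1}{(k+3)\hbar},
\]
which gives exactly the constants $\frac{4\pi}{3\hbar}$ and $\frac{4\pi}{5\hbar}$.

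The integrand converges pointwise. For $s<1$ we have $e^{C(s^2-1)}\to0$, so the integrand tends to $s^{k+2}/\hbar$. For $s>1$ the exponential blows up, so the integrand tends to $0$. The single point $s=1$ has measure zero. The limit is therefore $\frac1\hbar s^{k+2}\one_{\{s\le1\}}$, which is the rescaled saturated profile. This matches $R_N$ in Lemma \ref{lem:sharp-pauli} and is consistent with $\beta(-\infty)$ in Lemma \ref{lem:beta}.

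The only step that needs care is finding an integrable majorant that is uniform in $C$. We restrict to $C\ge1$. On $[0,\sqrt2]$ we bound the integrand by $s^{k+2}/\hbar$. For $s\ge\sqrt2$ we have $s^2-1\ge s^2/2$, so the integrand is at most $s^{k+2}e^{-C(s^2-1)}\le s^{k+2}e^{-s^2/2}$. The function $\frac1\hbar s^{k+2}\one_{\{s\le\sqrt2\}}+s^{k+2}e^{-s^2/2}$ is integrable and does not depend on $C$. Dominated convergence then yields the limits above, and with them $\beta_k(-C)=\frac{4\pi}{(k+3)\hbar}C^{\frac{k+3}{2}}(1+o(1))$ for $k=0,2$.

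As an alternative route, one could also use the scaling $\beta_k^\hbar(c)=\frac1\hbar\beta_k^1(c-\log\hbar)$ from the proof of Lemma \ref{lem:beta} to reduce to $\hbar=1$. Since $C-\log\hbar\sim C$ as $C\to\infty$, the shift only affects lower-order terms. The direct argument above is cleaner, however, and we do not expect any real obstacle.
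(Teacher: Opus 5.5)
Your proof is correct and follows the paper's argument: the same rescaling $p=\sqrt{C}\,q$ reduces $\beta_k(-C)$ to $4\pi C^{\frac{k+3}{2}}\int_0^\infty s^{k+2}\big(e^{C(s^2-1)}+\hbar\big)^{-1}\,\mathrm{d}s$, and dominated convergence then gives the limit. The only difference is cosmetic: the paper splits at $s=1$ and uses the majorant $s^{k+2}e^{-(s^2-1)}$ on $s>1$ for $C\ge1$, whereas you split at $\sqrt2$. One small slip in your optional aside: the shifted argument should be $-(C+\log\hbar)$ rather than $C-\log\hbar$, which does not affect the conclusion.
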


\begin{proof}
With $p=\sqrt C\,q$,
\[
 C^{-\frac32}\beta_0(-C)  =4\pi\int_0^\infty\frac{r^2}{e^{C(r^2-1)}+\hbar}\,\dr.
\]
On $r\in(0,1)$ the integrand is bounded by $\frac1\hbar r^2$ and converges to that function.  On $(1,\infty)$, for $C\geq1$,
\[
 \frac{r^2}{e^{C(r^2-1)}+\hbar}  \leq r^2e^{-(r^2-1)},
\]
which is integrable.  Dominated convergence gives the first asymptotic formula.  The same argument, with $r^4$ in place of $r^2$, gives the second assertion.
\end{proof}

We now state for later use an elementary form of Scheff\'e's lemma, which will be used repeatedly.

\begin{lemma} \label{lem:pos-L1}
Let $(X,\mu)$ be a measure space and let $h_j,h\geq0$ belong to $L^1(X,\mu)$.  If $h_j\to h$ almost everywhere and
\[
 \int_X h_j\,\dd\mu \to \int_X h\,\dd\mu,
\]
then $h_j\to h$ strongly in $L^1(X,\mu)$.
\end{lemma}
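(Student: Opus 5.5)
The plan is the classical Scheffé argument via the pointwise minimum. Set $m_j:=\min\{h_j,h\}$. Since $h_j,h\geq0$, we have $0\leq m_j\leq h$. Moreover, $m_j\to\min\{h,h\}=h$ almost everywhere, because $h_j\to h$ almost everywhere and $(a,b)\mapsto\min\{a,b\}$ is continuous. The function $h\in L^1(X,\mu)$ is an integrable majorant, so dominated convergence gives
\[
 \int_X m_j\,\dd\mu\to\int_X h\,\dd\mu.
\]

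Next, I would use the pointwise identity $|h_j-h|=h_j+h-2\min\{h_j,h\}$, valid for any two real numbers. Integrating it gives
\[
 \int_X|h_j-h|\,\dd\mu=\int_X h_j\,\dd\mu+\int_X h\,\dd\mu-2\int_X m_j\,\dd\mu.
\]
Every term on the right is finite because all the functions lie in $L^1$. By the hypothesis on the integrals and the limit obtained above, the right-hand side tends to $\int h+\int h-2\int h=0$. This gives $h_j\to h$ in $L^1(X,\mu)$.

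There is no substantive obstacle. The only point needing care is that no uniform domination of $h_j$ itself is available. The reason for passing through $m_j$ is that it is dominated by the fixed integrable function $h$, while convergence of the masses $\int_X h_j\,\dd\mu$ supplies the missing control of the part of $h_j$ lying above $h$.
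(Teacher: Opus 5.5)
Your proof is correct and is essentially identical to the paper's. Both apply dominated convergence to $\min\{h_j,h\}\le h$, which converges almost everywhere to $h$, and then integrate the pointwise identity $|h_j-h|=h_j+h-2\min\{h_j,h\}$.
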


\begin{proof}
Since $\min\{h_j,h\}\to h$ almost everywhere and $0\leq\min\{h_j,h\}\leq h$, dominated convergence gives
\[
 \int_X \min\{h_j,h\} \, \dd\mu \to \int_X h\,\dd\mu.
\]
The identity
\[
 \int_X |h_j-h|\,\dd\mu  =\int_X h_j\,\dd\mu+\int_X h\,\dd\mu  -2\int_X \min\{h_j,h\}\,\dd\mu
\]
proves the claim.
\end{proof}

The following proposition is unrelated to the regularization, but of independent interest. It shows that we can prove continuity of the exact equilibrium map, on the entire admissible moment set. The nonstandard point is continuity at the saturation boundary, where the finite-temperature parameter $c$ tends to $-\infty$. Lemma \ref{lem:beta0-ze-t} identifies the pointwise limit, while Lemma \ref{lem:pos-L1} upgrades it to strong weighted $L^1$ convergence.

\begin{proposition}
Let $(N_j,P_j,E_j),(N,P,E)\in\mathfrak M_\hbar$ and assume
\begin{equation}\label{eq/ mom conv}
 (N_j,P_j,E_j)\to(N,P,E).
\end{equation}
Then
\bq \label{eq:conti}
 \calF[N_j,P_j,E_j]  \to \calF[N,P,E]  \quad\text{strongly in }L^1_2(\R^3_p).
\eq
\end{proposition}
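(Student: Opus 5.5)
Since all the functions $\calF[N_j,P_j,E_j]$ and $\calF[N,P,E]$ are nonnegative and have moments $(N_j,P_j,E_j)\to(N,P,E)$ by Proposition \ref{prop:vari}, the plan is to reduce \eqref{eq:conti} to pointwise almost-everywhere convergence in $p$. Then Lemma \ref{lem:pos-L1}, applied with $h_j=(1+|p|^2)\calF[N_j,P_j,E_j]$ and $h=(1+|p|^2)\calF[N,P,E]$, gives strong convergence in $L^1_2(\R^3_p)$, because $\int h_j\,\dpp=N_j+E_j\to N+E=\int h\,\dpp$. Moreover, by passing to subsequences it suffices to show that every subsequence has a further subsequence along which the pointwise limit holds. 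Set $u_j=P_j/N_j\to u=P/N$ and $B_j=B(N_j,P_j,E_j)\to B=B(N,P,E)$; both converge by continuity, since $N>0$. Terms with $E_j=E_\star(N_j,P_j)$ are saturated, and all remaining terms are in the interior with parameters $c_j=\beta^{-1}(B_j)$.

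\emph{Case 1: $E>E_\star(N,P)$.} Here $B<\beta(-\infty)$, so eventually $B_j<\beta(-\infty)$, and all terms are interior. By Lemma \ref{lem:beta}, $\beta^{-1}$ is continuous on $(0,\beta(-\infty))$, hence $c_j\to c$. Since $\beta_0$ is continuous (dominated convergence), $a_j\to a$. The profile then converges pointwise for every $p$.

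\emph{Case 2: $E=E_\star(N,P)$.} The limit is $g_\star=\frac1\hbar\one_{\{|p-u|\le R_N\}}$. Saturated terms in the sequence equal $\frac1\hbar\one_{\{|p-u_j|\le R_{N_j}\}}$, which converge to $g_\star$ off the sphere $|p-u|=R_N$. For interior terms, $B_j\uparrow\beta(-\infty)$ (along the subsequence) forces $c_j\to-\infty$, by Lemma \ref{lem:beta}. Write $C_j=-c_j\to+\infty$. The exponent is $a_j|p-u_j|^2+c_j=C_j\bigl(\frac{a_j}{C_j}|p-u_j|^2-1\bigr)$. By Lemma \ref{lem:beta0-ze-t},
\[
a_j/C_j=\beta_0(-C_j)^{\frac23}N_j^{-\frac23}C_j^{-1}\to\Bigl(\frac{4\pi}{3\hbar N}\Bigr)^{\frac23}=R_N^{-2}.
\]
Hence, for $|p-u|<R_N$ the bracket is eventually negative and bounded away from $0$, so the exponent tends to $-\infty$ and the profile tends to $\frac1\hbar$. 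For $|p-u|>R_N$ the exponent tends to $+\infty$ and the profile tends to $0$. This gives a.e. convergence to $g_\star$, and Lemma \ref{lem:pos-L1} concludes.

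The main obstacle is Case 2: identifying the limiting Fermi radius requires the precise asymptotics of $\beta_0$, which is exactly why Lemma \ref{lem:beta0-ze-t} was recorded. Nothing beyond the leading order of $\beta_0$ is needed, since the energy convergence is supplied for free by the moment identity in Proposition \ref{prop:vari} together with Lemma \ref{lem:pos-L1}. A minor point to check is the mixing of saturated and interior terms along the sequence; the subsequence splitting handles this.
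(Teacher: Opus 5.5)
Your proposal is correct and follows essentially the same route as the paper: you reduce to a.e.\ convergence via Lemma \ref{lem:pos-L1}, using the weight $1+|p|^2$ and the exact moment identities. You then use continuity of $\beta^{-1}$ in the interior case, and in the saturated case Lemma \ref{lem:beta0-ze-t} to show $c_j\to-\infty$ with the Fermi radius converging to $R_N$. Your normalization $a_j/C_j\to R_N^{-2}$ is the reciprocal of the paper's $r_j^2=C_j/a_j\to R_N^2$, and your subsequence splitting is equivalent to the paper's separate treatment of saturated and interior indices (only $B_j\to\beta(-\infty)$ from below is needed there, not the monotonicity your $\uparrow$ suggests).
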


\begin{proof}
Set $F_j=\calF[N_j,P_j,E_j]$ and $F=\calF[N,P,E]$. Proposition \ref{prop:vari} and the assumption \eqref{eq/ mom conv} already tell us that
\[
 \int_{\R^3}(1+|p|^2)F_j\,\dpp=N_j+E_j  \to N+E  =\int_{\R^3}(1+|p|^2)F\,\dpp.
\]
In view of Lemma \ref{lem:pos-L1} it therefore suffices to check that $F_j(p)\to F(p)$ a.e., and \eqref{eq:conti} would be proven.

Assume that $E>E_\star(N,P)$. Then eventually $E_j>E_\star(N_j,P_j)$, and continuity of $\beta^{-1}$ on $(0,\beta(-\infty))$ gives
\[
 c_j\to c,  \quad  u_j=\frac{P_j}{N_j}\to\frac PN,  \quad  a_j\to a.
\]
Thus $F_j(p)\to F(p)$ for every $p$ and we obtain \eqref{eq:conti}.

Assume now that $E=E_\star(N,P)$.  For indices satisfying $E_j=E_\star(N_j,P_j)$, the profiles are saturated distributions: $F_j(p) = \mathbf{1}_{|p-P_j/N_j|\le R_{N_j}}$. Since $P_j/N_j\to P/N$ and $R_{N_j}\to R_N$, clearly $F_j(p)$ is a.e. convergent to $F(p)$ and we are done. Next, it remains to prove convergence for the indices where $E_j > E_\star(N_j,P_j)$. By assumption,
\[
 B(N_j,P_j,E_j)\to\beta(-\infty),
\]
and thus $c_j\to-\infty$.  Set  
\[
C_j=-c_j \quad \text{and} \quad r_j^2:=\frac{C_j}{a_j}.
\]
By Lemma \ref{lem:beta0-ze-t},
\[
 r_j^3  =\frac{C_j^{\frac32}N_j}{\beta_0(-C_j)}  \to\frac{3\hbar N}{4\pi}=R_N^3.
\]
Hence $r_j\to R_N$, $u_j\to u=P/N$, and $a_j=C_j/r_j^2\to+\infty$.  Since
\[
 a_j|p-u_j|^2+c_j  =a_j\lt(|p-u_j|^2-r_j^2\rt),
\]
we obtain for every $p$ with $|p-u|\neq R_N$,
\[
 F_j(p)\to\frac1\hbar\one_{\{|p-u|<R_N\}}=F(p)  \quad\text{a.e. in }p.
\]
This completes the proof.
\end{proof}

In particular, if a sequence of admissible moment triples converges to the vacuum triple $(0,0,0)$, the corresponding exact equilibria converge to zero immediately, since
\[
 \|\calF[N_j,P_j,E_j]\|_{L^1_2(\R^3_p)}=N_j+E_j\to0.
\]

The next proposition is the one that is of practical use to us, as it gives the consistency of the $\e$-regularization with respect to the exact equilibrium. Note carefully that the argument of the regularized Fermi-Dirac distribution is also assumed to change with the parameter $\e$. Indeed, this is the form needed later in the compactness argument. Convergence of the macroscopic moments determines the strong limit of the regularized equilibrium profile throughout the admissible moment set, including the saturation boundary.

\begin{proposition}\label{prop:consist}
Let $\e_j\downarrow0$, and let $(N_j,P_j,E_j)$ be moment triples generated by Pauli-admissible functions $\{f_j\}$. We assume
\[
 (N_j,P_j,E_j)\to(N,P,E),
\]
where either $(N,P,E)\in\mathfrak M_\hbar$ or $(N,P,E)=(0,0,0)$. Let $F_j^{\rm reg} := \calF^{\e_j}(f_j)$ be the regularized profile obtained by \eqref{eq:reg-N-u}--\eqref{eq:Feps}, with $\e=\e_j$. Then
\bq \label{eq:consi}
 F_j^{\rm reg}\to\calF[N,P,E] \quad\text{strongly in }L^1_2(\R^3_p),
\eq
where $\calF[0,0,0]:=0$.
\end{proposition}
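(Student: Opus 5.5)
The plan is to treat the vacuum limit and the non-vacuum limit separately. In the non-vacuum case I would reduce to Lemma \ref{lem:pos-L1} exactly as in the proof of the continuity proposition. The reduction needs two facts: (i) the weighted mass $\int(1+|p|^2)F_j^{\rm reg}\,\dpp$ converges to $N+E$, and (ii) $F_j^{\rm reg}\to\calF[N,P,E]$ almost everywhere in $p$.

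\emph{Vacuum case.} If $(N,P,E)=(0,0,0)$, Lemma \ref{lem:unif-e} gives directly
\[
\|F_j^{\rm reg}\|_{L^1_2(\R^3_p)}\le C_\hbar(N_j+E_j)\to0 .
\]
This is precisely the reason the vacuum interpolation was built to avoid an additive energy background, and nothing more is needed here.

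\emph{Non-vacuum case, convergence of the parameters.} Let $(N,P,E)\in\mathfrak M_\hbar$, so $N>0$ and $N_j>0$ for large $j$. I would first show the regularized parameters converge.
\begin{itemize}
\item From \eqref{eq:reg-N-u}, $N^{\e_j}_j\to N$ and $u_j^{\e_j}\to P/N=u$.
\item Eventually $N_j+E_j\ge 2\e_j$, so $\chi_{\e_j}=1$ and $B^{\e_j}=\overline{B^{\e_j}}$.
\item Since $D^{\e_j}\to N$ and $\e_j(N_j+E_j)\to0$, we get $S^{\e_j}\to E-|P|^2/N>0$, and hence $\widetilde B^{\e_j}\to B(N,P,E)\in(0,\beta(-\infty)]$.
\item The truncation levels $\e_j\beta(-\infty)\to 0$ and $(1-\e_j)\beta(-\infty)\to\beta(-\infty)$. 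Because the max--min map is $1$-Lipschitz, $B^{\e_j}\to B(N,P,E)$; this includes the boundary case $B=\beta(-\infty)$.
\end{itemize}
By \eqref{eq:Feps-mom} and \eqref{eq:beta-kappa}, the weighted mass is
\[
N^{\e_j}_j+(B^{\e_j})^{-5/3}(N_j^{\e_j})^{5/3}+N^{\e_j}_j|u^{\e_j}_j|^2 .
\]
It converges to $N+B^{-5/3}N^{5/3}+|P|^2/N=N+E$. In the boundary case this uses $\beta(-\infty)^{-5/3}N^{5/3}=\kappah N^{5/3}=E-|P|^2/N$. This proves (i).

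\emph{Pointwise convergence.} In the interior case $E>E_\star(N,P)$, continuity of $\beta^{-1}$ gives $c^{\e_j}\to c$. Continuity of $\beta_0$ then gives $a^{\e_j}\to a=\beta_0(c)^{2/3}N^{-2/3}$. Hence $F_j^{\rm reg}(p)\to\calF[N,P,E](p)$ for every $p$.

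In the saturated case $E=E_\star(N,P)$ the limit is harder to identify; I expect this to be the main obstacle. Here $B^{\e_j}\to\beta(-\infty)$, so $c_j:=c^{\e_j}\to-\infty$. I would copy the zero-temperature argument of the continuity proposition. Set $C_j=-c_j$ and $r_j^2=C_j/a^{\e_j}$. Then
\[
r_j^3=\frac{C_j^{3/2}N_j^{\e_j}}{\beta_0(-C_j)}\to\frac{3\hbar N}{4\pi}=R_N^3
\]
by Lemma \ref{lem:beta0-ze-t} together with $N_j^{\e_j}\to N$. It follows that $a^{\e_j}=C_j/r_j^2\to\infty$. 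Writing the exponent as $a^{\e_j}(|p-u^{\e_j}_j|^2-r_j^2)$ gives pointwise convergence to $\frac1\hbar\one_{\{|p-u|<R_N\}}$ off the sphere $|p-u|=R_N$, which is a null set. This proves (ii).

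\emph{Conclusion.} Applying Lemma \ref{lem:pos-L1} to $(1+|p|^2)F_j^{\rm reg}$ yields \eqref{eq:consi}. The only delicacy in the saturated case is that the regularized parameters feed $N^{\e_j}_j$ rather than $N_j$ into $a^{\e_j}$. This is harmless because both converge to $N$.
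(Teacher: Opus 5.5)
Your proposal is correct and follows the paper's proof essentially step by step: the vacuum case via Lemma \ref{lem:unif-e}, then convergence of $N_j^{\e_j}$, $u_j^{\e_j}$, $\widetilde B_j^{\e_j}$ and hence of the truncated $B_j^{\e_j}$ to $B(N,P,E)$, continuity of $\beta^{-1}$ in the interior case, the zero-temperature radius $r_j\to R_N$ via Lemma \ref{lem:beta0-ze-t} in the saturated case, and finally Lemma \ref{lem:pos-L1} with weight $1+|p|^2$. The only cosmetic difference is that you compute the weighted-mass limit once for both branches using $B^{-5/3}N^{5/3}=E-|P|^2/N$, whereas the paper does it separately in each case.
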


\begin{proof}
Throughout this proof let us denote by $N_j^{\e_j},u_j^{\e_j},c^{\e_j}_j,\ldots$ the quantities defined through \eqref{eq:reg-N-u}--\eqref{eq/areg}, by using $(N_j,P_j,E_j)$ in place of $(N,P,E)$.

If $(N,P,E)=(0,0,0)$, Lemma \ref{lem:unif-e} gives
\[
 \|F_j^{\rm reg}\|_{L^1_2(\R^3_p)}  \leq C_\hbar(N_j+E_j)\to0.
\]
Assume from now on that $N>0$. Since $N_j\to N$, eventually $N_j\geq N/2$, and so
\bq \label{eq:consi-Nu}
 N_j^{\e_j}=\frac{N_j}{1+\e_jN_j}\to N,  \quad  u_j^{\e_j}=\frac{P_j}{N_j+\e_j(1+|P_j|)}\to\frac PN.
\eq
Moreover, $\chi_{\e_j}(N_j+E_j)=1$ for all sufficiently large $j$, and
\[
 S_j^{\e_j} =E_j-\frac{|P_j|^2}{N_j+\e_j(N_j+E_j)}    +\e_j(N_j+E_j) \to E-\frac{|P|^2}{N}.
\]
Hence $\widetilde B_j^{\e_j}\to B(N,P,E)$.  Since the lower and upper truncation levels tend to $0$ and $\beta(-\infty)$, respectively,
\bq \label{eq:consi-B}
 B_j^{\e_j}\to B(N,P,E).
\eq

Let us consider two cases separately. If $B(N,P,E)<\beta(-\infty)$, then by continuity of $\beta^{-1}$ on $(0,\beta(-\infty))$, we deduce from \eqref{eq:consi-B} that $c^{\e_j}_j \to c = \beta^{-1}(B(N,P,E))$. Combining this with \eqref{eq:consi-Nu}, we obtain 
\[
 F_j^{\rm reg}(p)\to\calF[N,P,E](p).
\]
Furthermore, from \eqref{eq:Feps-mom} and \eqref{eq:consi-Nu}:
\[
 \int_{\R^3}(1+|p|^2)F_j^{\rm reg}\,\dpp =N_j^{\e_j}  +(B_j^{\e_j})^{-\frac53}(N_j^{\e_j})^{\frac53}  +N_j^{\e_j}|u_j^{\e_j}|^2 \to N+E.
\]
Applying Lemma \ref{lem:pos-L1} with $\mathrm{d}\mu=(1+|p|^2)\dpp$ proves \eqref{eq:consi}.

Suppose finally that $B(N,P,E)=\beta(-\infty)$. Then \eqref{eq:consi-B} shows $c_j^{\e_j}=\beta^{-1}(B_j^{\e_j})\to-\infty$.  Set  
\[
C_j=-c_j^{\e_j} \quad \text{and} \quad  r_j^2:=\frac{C_j}{a_j^{\e_j}}.
\]
Since $a_j^{\e_j}  =\beta_0(-C_j)^{\frac23}(N_j^{\e_j})^{-\frac23}$, Lemma \ref{lem:beta0-ze-t} and \eqref{eq:consi-Nu} give
\[
 r_j^3  =\frac{C_j^{\frac32}N_j^{\e_j}}{\beta_0(-C_j)}  \to\frac{3\hbar N}{4\pi}=R_N^3.
\]
Thus the profiles converge pointwise away from $|p-P/N|=R_N$ to the saturated equilibrium \eqref{eq:satu-FD}.  Their weighted integrals converge to
\[
 N+\beta(-\infty)^{-\frac53}N^{\frac53}+N\lt|\frac PN\rt|^2=N+E,
\]
thanks to the preceding moment formula and by recalling the assumption $B(N,P,E)=\beta(-\infty)$. A final application of Lemma \ref{lem:pos-L1} proves \eqref{eq:consi}.
\end{proof}

%
%
%
%
%
%

\subsection{The regularized equation}

For fixed $\e\in(0,\frac12)$, consider
\bq \label{eq:reg-PDE}
\pa_t f^\e+p\cdot\nabla_xf^\e  =\calF^\e(f^\e)-f^\e,  \quad  f^\e|_{t=0}=f_0.
\eq

The preceding growth and Lipschitz estimates allow us to solve the regularized equation by Picard iteration on arbitrary finite time intervals.

\begin{proposition}\label{prop:regul-exi}
Under \eqref{eq:ini}, for every fixed $\e\in(0,\frac12)$ there exists
\[
 f^\e\in C([0,\infty);L^1_2(\R^3 \times \R^3))
\]
satisfying
\bq \label{eq:reg-mild}
 f^\e(t,x,p)  =e^{-t}f_0(x-tp,p)  +\int_0^t e^{-(t-s)}  \calF^\e(f^\e)(s,x-(t-s)p,p)\,\ds.
\eq
Moreover,
\bq \label{eq:reg-pauli}
 0\leq f^\e\leq\frac1\hbar ,
\eq
and, for every $T>0$,
\bq \label{eq:unif-f-eps}
 \sup_{0<\e<\frac12}\sup_{0\leq t\leq T}  \|f^\e(t)\|_{L^1_2}\leq C_{T,\hbar,f_0}.
\eq
The same bound holds for $\calF^\e(f^\e)$ in place of $f^\e$.
\end{proposition}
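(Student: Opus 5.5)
We plan to solve \eqref{eq:reg-mild} by a Picard iteration on a short time interval in the weighted space $L^1_2(\R^3\times\R^3)$, and then extend the solution globally by iteration. The linear part of the mild formulation is harmless. The free transport $(x,p)\mapsto(x-tp,p)$ preserves Lebesgue measure and leaves $1+|p|^2$ unchanged, so it is an isometry of $L^1_2$. We define
\[
 \Gamma(g)(t):=e^{-t}f_0(x-tp,p)+\int_0^t e^{-(t-s)}\calF^\e(g)(s,x-(t-s)p,p)\,\ds
\]
on the closed set
\[
 X_T:=\lt\{g\in C([0,T];L^1_2):\ 0\le g\le \tfrac1\hbar \ \text{a.e.}\right\}.
\]
Since $\calF^\e(g)$ is a Fermi--Dirac profile, $0\le\calF^\e(g)\le\frac1\hbar$, and the weights satisfy $e^{-t}+\int_0^te^{-(t-s)}\ds=1$. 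Hence $\Gamma$ preserves the Pauli bound, which gives \eqref{eq:reg-pauli}. Integrating Lemma \ref{lem:unif-e} in $x$ shows that $\calF^\e(g)(s)$ lies in $L^1_2$. We still need continuity in time of $s\mapsto\calF^\e(g)(s)$ with values in $L^1_2$. This follows from Lemma \ref{lem:fix-e-Lip}, applied pointwise in $x$ and integrated, which gives
\[
 \|\calF^\e(g)-\calF^\e(h)\|_{L^1_2}\le C_{\e,\hbar}\|g-h\|_{L^1_2}.
\]
Continuity of translations in $L^1$ then shows that $\Gamma(g)\in C([0,T];L^1_2)$.

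The same global Lipschitz bound gives
\[
 \sup_{t\le T}\|\Gamma(g)-\Gamma(h)\|_{L^1_2}\le C_{\e,\hbar}T\sup_{t\le T}\|g-h\|_{L^1_2}.
\]
So $\Gamma$ is a contraction on $X_{T_0}$ with $T_0=1/(2C_{\e,\hbar})$, and the fixed point solves \eqref{eq:reg-mild}. The constant is global and the Pauli set is invariant, so $T_0$ does not depend on the data. We can therefore restart at $T_0,2T_0,\dots$, using the semigroup property of the mild formula, and obtain a global solution $f^\e\in C([0,\infty);L^1_2)$. As an alternative, one can use a weighted norm $\sup_t e^{-\lambda t}\|\cdot\|$ with $\lambda>C_{\e,\hbar}$ and avoid restarting altogether.

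The uniform bound \eqref{eq:unif-f-eps} is where $\e$-independence matters. We will not use $C_{\e,\hbar}$ here, only Lemma \ref{lem:unif-e}. Taking $L^1_2$ norms in \eqref{eq:reg-mild} and using the isometry of transport gives
\[
 \|f^\e(t)\|_{L^1_2}\le e^{-t}\|f_0\|_{L^1_2}+\int_0^te^{-(t-s)}C_\hbar\|f^\e(s)\|_{L^1_2}\,\ds.
\]
Gronwall's inequality then yields $\|f^\e(t)\|_{L^1_2}\le \|f_0\|_{L^1_2}e^{C_\hbar t}$, uniformly in $\e\in(0,\frac12)$. Applying Lemma \ref{lem:unif-e} once more transfers this bound to $\calF^\e(f^\e)$.

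The step we expect to need the most care is the rigorous Lipschitz estimate at the level of phase space, in particular the measurability in $(t,x)$ of $\calF^\e(g)$. To handle it, we will note that $\calF^\e(g)=\Phi(N^\e_g,c^\e_g,u^\e_g)$ with parameters that are Lipschitz functions of the moments, which are measurable in $(t,x)$. The estimate of Lemma \ref{lem:fix-e-Lip} then integrates directly in $x$. At vacuum points the profile is zero, and this is consistent with that lemma.
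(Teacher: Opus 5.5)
Your proposal is correct and follows essentially the paper's argument. Both use a fixed-point construction driven by the fixed-$\e$ Lipschitz bound of Lemma \ref{lem:fix-e-Lip}, obtain the Pauli bound because the mild formula is a convex combination of values in $[0,\frac1\hbar]$, and get the $\e$-uniform estimate from Lemma \ref{lem:unif-e} plus Gr\"onwall. The paper runs the Picard iteration directly on an arbitrary $[0,T]$ with the factorial bound $(C_{\e,\hbar}t)^n/n!$, whereas you use a short-time contraction with restarting (or a weighted-in-time norm), which is an equivalent way to handle a globally Lipschitz Duhamel equation.
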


\begin{proof}
Fix $\e\in(0,\frac12)$ and $T>0$. We construct the solution directly on $[0,T]$ by Picard iteration. Set
\[
 f^{0,\e}(t,x,p):=e^{-t}f_0(x-tp,p),
\]
and define recursively
\bq \label{eq:reg-Picard}
 f^{n+1,\e}(t,x,p) =e^{-t}f_0(x-tp,p) +\int_0^t e^{-(t-s)}  \calF^\e(f^{n,\e})(s,x-(t-s)p,p)\,\ds,  \quad n\geq0.
\eq
Since $0\leq\calF^\e(g)\leq\frac1\hbar $, induction gives
\[
 0\leq f^{n+1,\e}(t,x,p) \leq e^{-t}\frac1\hbar   +\frac1\hbar \int_0^t e^{-(t-s)}\,\ds =\frac1\hbar .
\]
The strong continuity of free transport in $L^1_2$, together with Lemmas \ref{lem:unif-e} and \ref{lem:fix-e-Lip}, shows inductively that every iterate belongs to $C([0,T];L^1_2)$. Moreover,
\[
 \sup_{0\leq t\leq T}\|f^{1,\e}(t)-f^{0,\e}(t)\|_{L^1_2} \leq C_\hbar T\|f_0\|_{L^1_2}.
\]
For $n\geq1$, applying Lemma \ref{lem:fix-e-Lip} to the difference between the iteration formulas for $f^{n+1,\e}$ and $f^{n,\e}$, and using the change of variables $y=x-(t-s)p$, which preserves the weight $1+|p|^2$, we obtain, for any $T>0$ and $t\in[0,T]$,
	\begin{align*}
		\|f^{n+1,\e}(t)-f^{n,\e}(t)\|_{L^1_2}&\leq C_{\e,\hbar}\int_0^t e^{-(t-s_1)}  \|f^{n,\e}(s_1)-f^{n-1,\e}(s_1)\|_{L^1_2}\,\ds_1 \\
		&\leq C_{\e,\hbar}^n\int_0^t\cdots \int_0^{s_{n-1}}   \|f^{1,\e}(s_n)-f^{0,\e}(s_n)\|_{L^1_2}\,\ds_n\cdots \ds_1 \\
		&\le C_{\hbar,f_0,T}\frac{\left(C_{\e,\hbar}t\right)^n}{n!}.
	\end{align*}
Consequently, for $n>m$,
\[
 \sup_{0\le t\le T}  \|f^{n,\e}(t)-f^{m,\e}(t)\|_{L^1_2}  \le C_{\hbar,f_0,T} \sum_{k=m}^{\infty}\frac{(C_{\e,\hbar}T)^k}{k!},
\]
which tends to zero as $m\to\infty$. Hence $\{f^{n,\e}\}_{n\ge0}$ is Cauchy in $C([0,T];L^1_2(\R^3\times\R^3))$.

Let $f^\e\in C([0,T];L^1_2(\R^3\times\R^3))$ denote its limit. The fixed-$\e$ Lipschitz estimate in Lemma \ref{lem:fix-e-Lip} implies
\[
 \calF^\e(f^{n,\e})\to\calF^\e(f^\e)  \quad\text{in }C([0,T];L^1_2(\R^3\times\R^3)),
\]
and therefore we may pass to the limit in \eqref{eq:reg-Picard} to obtain \eqref{eq:reg-mild}. The Pauli bound \eqref{eq:reg-pauli} follows from the corresponding bound for the iterates. Since $T>0$ is arbitrary and the limits agree on overlapping intervals, this defines $f^\e$ globally in time.

It remains to obtain an estimate independent of $\e$.  Lemma \ref{lem:unif-e} and \eqref{eq:reg-mild} give, for $0\leq t\leq T$,
\[
 \|f^\e(t)\|_{L^1_2}  \leq e^{-t}\|f_0\|_{L^1_2}  +C_\hbar\int_0^t e^{-(t-s)}\|f^\e(s)\|_{L^1_2}\,\ds.
\]
Multiplying by $e^t$ and applying Gr\"onwall's inequality yields \eqref{eq:unif-f-eps}.  Lemma \ref{lem:unif-e} gives the same estimate for $\calF^\e(f^\e)$. This completes the proof.
\end{proof}

%
%
%
%
%
%

\section{Compactness and passage to the limit}\label{sec:pass-eps}

Let $\{f^\e\}_{0<\e<\frac12}$ be the family of solutions to the approximating model constructed in Proposition \ref{prop:regul-exi}.  We first establish strong compactness of the density, momentum, and energy fields.  The continuity properties of the moment-to-equilibrium map from Section \ref{sec:ext-equil} then allow us to pass to the nonlinear relaxation term directly from the convergence of these three moments, including at points where the limiting moments lie on the saturation boundary.

%
%
%
%
%
%
\subsection{Compactness of the macroscopic moments}

Our goal in this subsection is to obtain strong compactness of
\[
 (N_{f^\e},P_{f^\e},E_{f^\e}).
\]
Velocity averaging directly gives compactness only for velocity averages with compactly supported test functions.  To reach the physical moments $1$, $p$, and $|p|^2$, we need two additional ingredients: spatial tightness of the density and uniform control of the velocity tails.  We first quantify the mass defect created by the regularization and derive spatial tightness.  We then obtain a local third-moment bound, which provides the additional velocity-tail control required for the energy.

We begin with the total mass. Set
\[
 M^\e(t):=\iint_{\R^3\times\R^3}f^\e(t,x,p)\,\dx\dpp.
\]
Since
\[
 N_{\calF^\e(f^\e)}=\frac{N_{f^\e}}{1+\e N_{f^\e}},
\]
we first derive a quantitative control of the small mass defect generated by the regularization. 
Let $\zeta_R\in C_c^\infty(\R^3)$ satisfy $0\leq\zeta_R\leq1$, $\zeta_R\uparrow1$, and $|\nabla\zeta_R|\leq C/R$.  Testing \eqref{eq:reg-PDE} against $\zeta_R(x)$ and then letting $R\to\infty$, the transport contribution vanishes since
\[
 \iint_{\R^3\times\R^3}|p|f^\e(t,x,p)\,\dx\dpp  \leq \|f^\e(t)\|_{L^1}^{\frac12}  \lt(\iint_{\R^3\times\R^3}|p|^2f^\e(t,x,p)\,\dx\dpp\rt)^{\frac12},
\]
uniformly on finite time intervals.  Hence
\[
 \frac{\dd}{\dd t}M^\e(t)  =-\int_{\R^3}\frac{\e N_{f^\e}(t,x)^2}  {1+\e N_{f^\e}(t,x)}\,\dx  \leq0.
\]
For $y\geq0$,
\[
 \frac{y}{1+y}\leq y^{\frac23}.
\]
Moreover, by Lemma \ref{lem:sharp-pauli}, we obtain
\[
 \frac{\e N_{f^\e}^2}{1+\e N_{f^\e}}  =N_{f^\e}\frac{\e N_{f^\e}}{1+\e N_{f^\e}}  \leq \e^{\frac23}N_{f^\e}^{\frac53}  \leq \kappah^{-1}\e^{\frac23}E_{f^\e},
\]
and Proposition \ref{prop:regul-exi} gives, for every $T>0$,
\bq \label{eq:m-loss-s}
 0\leq M^\e(0)-M^\e(t)  \leq C_{T,\hbar,f_0}\e^{\frac23},  \quad 0\leq t\leq T.
\eq

Integrating \eqref{eq:reg-PDE} in $p$ gives
\bq \label{eq:den-bal}
\pa_tN_{f^\e}+\nabla_x\cdot P_{f^\e}  =\frac{N_{f^\e}}{1+\e N_{f^\e}}-N_{f^\e}  \leq0.
\eq
Let $\chi_R$ be smooth, equal to zero on $B_R$, equal to one outside $B_{2R}$, and satisfy $|\nabla\chi_R|\leq C/R$.  Testing \eqref{eq:den-bal} by $\chi_R$ gives
\[
 \int_{\R^3}\chi_RN_{f^\e}(t,x)\,\dx  \leq\int_{\R^3}\chi_RN_{f_0}(x)\,\dx  +\frac{C}{R}\int_0^t\int_{\R^3}|P_{f^\e}(s,x)|\,\dx\ds.
\]
The first velocity moment is bounded by mass and kinetic energy, hence Proposition \ref{prop:regul-exi} implies
\bq \label{eq:spa-tig}
 \lim_{R\to\infty}  \sup_{0<\e<\frac12}\sup_{0\leq t\leq T}  \int_{|x|>R}N_{f^\e}(t,x)\,\dx=0.
\eq

The local gain of one velocity moment used below is classical in whole-space BGK compactness arguments; compare \cite{Per89}.  We record the short proof since it also indicates why the estimate is specific to $x\in\R^3$.

\begin{lemma}\label{lem:3-mom}
Let $T,R>0$, and suppose
\[
 f(t,x,p)  =e^{-t}f_0(x-tp,p) +\int_0^te^{-(t-s)}G(s,x-(t-s)p,p)\,\ds,
\]
where $f_0,G\geq0$.  Then
\begin{align*}
 &\int_0^T\int_{B_R}\int_{\R^3}|p|^3f(t,x,p)\,\dpp\dx\dt \\
 &\quad\leq 2R\lt(  \iint_{\R^3\times\R^3}|p|^2f_0(x,p)\,\dx\dpp  +\int_0^T\iint_{\R^3\times\R^3}|p|^2G(s,x,p)\,\dx\dpp\ds  \rt).
\end{align*}
\end{lemma}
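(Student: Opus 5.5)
We will estimate the two terms of the mild formula separately. Since $f_0,G\ge0$ and $e^{-t},e^{-(t-s)}\le1$, it suffices to bound
\[
 I_0:=\int_0^T\int_{B_R}\int_{\R^3}|p|^3f_0(x-tp,p)\,\dpp\dx\dt,
 \qquad
 I_G:=\int_0^T\int_{B_R}\int_{\R^3}|p|^3\int_0^t G(s,x-(t-s)p,p)\,\ds\,\dpp\dx\dt .
\]
The key point is the geometry of free trajectories in $\R_x^3$. A particle moving with momentum $p$ along the straight line $y+\tau p$ spends at most time $2R/|p|$ inside the ball $B_R$, because the set $\{\tau:|y+\tau p|<R\}$ is an interval of length at most $2R/|p|$. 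This gain of $|p|^{-1}$ converts $|p|^3$ into $|p|^2$.

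For $I_0$, we use Tonelli and change variables $y=x-tp$ for fixed $(t,p)$; this map is measure preserving in $x$. We obtain
\[
 I_0=\iint|p|^3 f_0(y,p)\int_0^T\one_{\{|y+tp|<R\}}\,\dt\,\dy\dpp\le 2R\iint|p|^2f_0(y,p)\,\dy\dpp .
\]
For $p=0$ the weight $|p|^3$ vanishes, so no issue arises.

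For $I_G$, we swap the order of integration to $\int_0^T\ds\int_s^T\dt$. For fixed $(s,t,p)$ we substitute $y=x-(t-s)p$, which gives
\[
 I_G=\int_0^T\iint|p|^3G(s,y,p)\int_s^T\one_{\{|y+(t-s)p|<R\}}\,\dt\,\dy\dpp\ds .
\]
The inner time integral is again at most $2R/|p|$. This yields $2R\int_0^T\iint|p|^2G$. Summing the two bounds gives the claim.

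The only delicate step is justifying the Fubini/Tonelli exchanges and the measurability of the composed functions. Both follow from nonnegativity of the integrands. There is no real obstacle beyond this, and the argument genuinely uses only the bounded chord length of $B_R$ along lines in $\R^3$.
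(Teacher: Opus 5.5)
Your proposal is correct and follows essentially the same route as the paper's proof. Both drop the exponential factors, use Tonelli with the changes of variables $y=x-tp$ and $y=x-(t-s)p$, and bound the time a free trajectory spends in $B_R$ by the chord-length estimate $2R/|p|$, which turns $|p|^3$ into $2R|p|^2$.
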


\begin{proof}
We first record a geometric estimate for free trajectories. For $y,p\in\R^3$, set
\[
 A_{R,T}(y,p)  :=  \int_0^T \one_{B_R}(y+tp)\,\dt  =  \lt|\{t\in[0,T]:y+tp\in B_R\}\rt|.
\]
Thus $A_{R,T}(y,p)$ is the amount of time that the free trajectory $t\mapsto y+tp$ spends in $B_R$ during $[0,T]$.

Suppose first that $p\neq0$, and set
\[
 e:=\frac{p}{|p|},  \quad  y_\perp:=y-(y\cdot e)e.
\]
Then, we have $ y+tp  =  y_\perp+(y\cdot e+t|p|)e$, and consequently
\[
 |y+tp|^2  =  |y_\perp|^2+(y\cdot e+t|p|)^2
\]
due to $y_\perp\cdot e=0$. If $|y_\perp|\geq R$, then the line $\{y+tp:t\in\R\}$ does not enter $B_R$, up to the irrelevant tangential case, and hence
\[
 A_{R,T}(y,p)=0.
\]
If $|y_\perp|<R$, set $\rho:=\sqrt{R^2-|y_\perp|^2}$. The condition $y+tp\in B_R$ is then equivalent to $ |y\cdot e+t|p||<\rho$, that is,
\[
 \frac{-\rho-y\cdot e}{|p|}  <  t  <  \frac{\rho-y\cdot e}{|p|}.
\]
Thus the set of all $t\in\R$ for which $y+tp\in B_R$ is an interval of length
\[
 \frac{2\rho}{|p|}  =  \frac{2\sqrt{R^2-|y_\perp|^2}}{|p|}.
\]
Restricting this interval to $[0,T]$, we obtain
\bq \label{eq:crossing-time}
 A_{R,T}(y,p)  \leq  \min\lt\{  T,\,  \frac{2\sqrt{(R^2-|y_\perp|^2)_+}}{|p|}  \rt\}  \leq  \min\lt\{T,\frac{2R}{|p|}\rt\}.
\eq
In particular,
\bq \label{eq:crossing-moment}
 |p|^3A_{R,T}(y,p)  \leq  \min\{T|p|^3,2R|p|^2\}  \leq  2R|p|^2.
\eq
For $p=0$, we simply have
\[
 A_{R,T}(y,0)=T\one_{B_R}(y),
\]
hence \eqref{eq:crossing-moment} remains valid.

We now estimate separately the two terms in the mild formula. For the contribution of the initial datum, Tonelli's theorem, the change of variables $y=x-tp$, and $e^{-t}\leq1$ give
\begin{align*}
 \int_0^T\int_{B_R}\int_{\R^3}  |p|^3e^{-t}f_0(x-tp,p)\,\dpp\dx\dt  &=  \iint_{\R^3\times\R^3}  |p|^3f_0(y,p)  \lt(  \int_0^T e^{-t}\one_{B_R}(y+tp)\,\dt  \rt)  \,\dy\dpp  \\
 & \leq  \iint_{\R^3\times\R^3}  |p|^3f_0(y,p)A_{R,T}(y,p)\,\dy\dpp  \\
 & \leq  2R  \iint_{\R^3\times\R^3}  |p|^2f_0(y,p)\,\dy\dpp,
\end{align*}
where the last inequality follows from \eqref{eq:crossing-moment}.  Notice that this bound is uniform in $T$: a fixed free trajectory can spend at most $2R/|p|$ units of time in $B_R$, independently of the length of the observation interval.

We next consider the source term.  By Tonelli's theorem,
\begin{align*}
 I_G  &:=  \int_0^T\int_{B_R}\int_{\R^3}|p|^3  \int_0^t e^{-(t-s)}  G(s,x-(t-s)p,p)\,\ds  \,\dpp\dx\dt  \\
 &=  \int_0^T\int_s^T\int_{B_R}\int_{\R^3}  |p|^3e^{-(t-s)}  G(s,x-(t-s)p,p)  \,\dpp\dx\dt\ds.
\end{align*}
For fixed $s$, set
\[
 \tau:=t-s,  \quad  y:=x-\tau p.
\]
Then $0\leq\tau\leq T-s$, while $x\in B_R$ is equivalent to $y+\tau p\in B_R$.
Thus,
\begin{align*}
 I_G  &=  \int_0^T  \iint_{\R^3\times\R^3}  |p|^3G(s,y,p)  \lt(  \int_0^{T-s}  e^{-\tau}\one_{B_R}(y+\tau p)\,\dd\tau  \rt)  \,\dy\dpp\ds  \\
 &\leq  \int_0^T  \iint_{\R^3\times\R^3}  |p|^3G(s,y,p)  A_{R,T-s}(y,p)  \,\dy\dpp\ds  \\
 &\leq  2R  \int_0^T  \iint_{\R^3\times\R^3}  |p|^2G(s,y,p)\,\dy\dpp\ds,
\end{align*}
where we used \eqref{eq:crossing-moment}, with $T-s$ in place of $T$, in the last step.

Combining the two estimates gives
\[
\int_0^T\int_{B_R}\int_{\R^3}  |p|^3f(t,x,p)\,\dpp\dx\dt  \leq  2R\lt(  \iint_{\R^3\times\R^3}  |p|^2f_0(x,p)\,\dx\dpp  +  \int_0^T  \iint_{\R^3\times\R^3}  |p|^2G(s,x,p)\,\dx\dpp\ds  \rt),
\]
which is the desired estimate.

We emphasize that the geometric estimate \eqref{eq:crossing-moment} itself is uniform in $T$. The dependence on $T$ in the final bound enters only through the time integral of the source term.  For example, if
\[
 \sup_{0\leq s\leq T}  \iint_{\R^3\times\R^3}  |p|^2G(s,x,p)\,\dx\dpp  \leq C_T,
\]
then
\[
 \int_0^T\int_{B_R}\int_{\R^3}  |p|^3f(t,x,p)\,\dpp\dx\dt  \leq  2R\lt(  \iint_{\R^3\times\R^3}|p|^2f_0\,\dx\dpp  +TC_T  \rt).
\]
This completes the proof.
\end{proof}

\begin{remark}
The estimate \eqref{eq:crossing-time} uses the fact that a characteristic in $\R^3_x$ crosses a fixed bounded set only once.  On a periodic spatial domain the same characteristic may return to the set repeatedly, so this argument does not yield the corresponding local third-moment gain from a global second-moment bound.  Hence, the whole-space geometry is used essentially at this point.
\end{remark}

Applying Lemma \ref{lem:3-mom} to \eqref{eq:reg-mild} with $G=\calF^\e(f^\e)$ gives
\bq \label{eq:third-uni}
 \sup_{0<\e<\frac12}  \int_0^T\int_{B_R}\int_{\R^3}|p|^3f^\e(t,x,p)\,\dpp\dx\dt  \leq C_{T,R,\hbar,f_0}.
\eq

We next use the standard time-dependent velocity averaging lemma \cite{DLM91,GLPS88}.  Since $0\leq f^\e,\calF^\e(f^\e)\leq\frac1\hbar $, the uniform mass bounds imply
\[
 \sup_{0<\e<\frac12}\lt(  \|f^\e\|_{L^2((0,T)\times\R^3\times\R^3)}  +\|\calF^\e(f^\e)\|_{L^2((0,T)\times\R^3\times\R^3)}  \rt)<\infty.
\]
Hence, after extraction,
\bq \label{eq:weak-f}
 f^\e\rightharpoonup f  \quad\text{weakly in }L^2_{\rm loc}((0,\infty)\times\R^3\times\R^3)
\eq
and weakly-star in $L^\infty$.  In particular,
\bq \label{eq:lim-pauli}
 0\leq f\leq\frac1\hbar .
\eq
Since
\[
 (\pa_t+p\cdot\nabla_x)f^\e=\calF^\e(f^\e)-f^\e,
\]
velocity averaging yields, for every $\varphi\in C_c^\infty(\R^3_p)$,
\[
 \int_{\R^3}\varphi(p)f^\e(t,x,p)\,\dpp  \to  \int_{\R^3}\varphi(p)f(t,x,p)\,\dpp  \quad\text{strongly in }L^2_{\rm loc}((0,\infty)\times\R^3_x).
\]
The standard velocity-tail argument now extends this compactness to $1,p,|p|^2$: the density and momentum tails are controlled by the global second moment, whereas
\[
 \int_0^T\int_K\int_{|p|>L}|p|^2f^\e\,\dpp\dx\dt  \leq\frac{C_{T,K}}{L}
\]
for every compact $K\subset\R^3_x$, by \eqref{eq:third-uni}.  Consequently,
\[
 (N_{f^\e},P_{f^\e},E_{f^\e})  \to(N_f,P_f,E_f)  \quad\text{strongly in }L^1_{\rm loc}((0,\infty)\times\R^3_x).
\]
After a further extraction,
\bq \label{eq:mom-ae}
 (N_{f^\e},P_{f^\e},E_{f^\e})(t,x)  \to(N_f,P_f,E_f)(t,x)  \quad\text{for a.e. }(t,x).
\eq
The bound \eqref{eq:lim-pauli} and Lemma \ref{lem:sharp-pauli} imply that, for almost every $(t,x)$ with $N_f(t,x)>0$,
\[
 (N_f(t,x),P_f(t,x),E_f(t,x))\in\mathfrak M_\hbar.
\]
If $N_f(t,x)=0$, nonnegativity implies
\[
 (N_f(t,x),P_f(t,x),E_f(t,x))=(0,0,0).
\]
To upgrade the local convergence of $N_{f^\e}$ to global $L^1$ convergence, it remains to rule out loss of mass at spatial infinity.  Set
\[
 M_0:=\iint_{\R^3\times\R^3}f_0(x,p)\,\dx\dpp.
\]
By \eqref{eq:m-loss-s},
\[
 \int_0^T\int_{\R^3}N_{f^\e}(t,x)\,\dx\dt\to TM_0.
\]
Since $M^\e(t)\leq M^\e(0)=M_0$, for every $\tau\in(0,T)$ and $R>0$, the local convergence gives
\begin{align*}
 \int_\tau^T\int_{B_R}N_f(t,x)\,\dx\dt
 &=\lim_{\e\to0}  \int_\tau^T\int_{B_R}N_{f^\e}(t,x)\,\dx\dt\\
 &\geq  \lim_{\e\to0}  \int_0^T\int_{\R^3}N_{f^\e}(t,x)\,\dx\dt  -\tau M_0   -T\sup_{0<\e<\frac12}\sup_{0\leq t\leq T}  \int_{|x|>R}N_{f^\e}(t,x)\,\dx.
\end{align*}
Using \eqref{eq:spa-tig}, first letting $R\to\infty$ and then $\tau\downarrow0$ yields
\[
 \int_0^T\int_{\R^3}N_f(t,x)\,\dx\dt\geq TM_0.
\]
On the other hand, Fatou's lemma gives
\[
 \int_0^T\int_{\R^3}N_f(t,x)\,\dx\dt  \leq  \liminf_{\e\to0}  \int_0^T\int_{\R^3}N_{f^\e}(t,x)\,\dx\dt  =TM_0.
\]
Consequently,
\[
 \int_0^T\int_{\R^3}N_f(t,x)\,\dx\dt=TM_0.
\]
Since $N_{f^\e}\to N_f$ almost everywhere and the total integrals converge, Lemma \ref{lem:pos-L1} gives
\bq \label{eq:den-glo}
 N_{f^\e}\to N_f  \quad\text{strongly in }L^1((0,T)\times\R^3_x).
\eq

%
%
%
%
%
%

\subsection{Identification of the limit }

We now identify the limit in \eqref{eq:weak-f}.
\begin{lemma}
     The weak limit $f$ in \eqref{eq:weak-f} satisfies the mild formula \eqref{eq:mild-main}. Furthermore, up to the subsequence extracted so far, for every $T>0$ we have $f^\e \to f$ in $C([0,T];L^1(\R^3\times\R^3))$.
\end{lemma}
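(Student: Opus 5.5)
The plan is to show first that the regularized collision terms converge strongly. Then the mild formula \eqref{eq:reg-mild} passes to the limit directly in $C([0,T];L^1)$, because free transport is an isometry.

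\emph{Strong convergence of the gain terms.} By \eqref{eq:mom-ae}, for a.e. $(t,x)$ the moments $(N_{f^\e},P_{f^\e},E_{f^\e})(t,x)$ converge to $(N_f,P_f,E_f)(t,x)$. This limit lies either in $\mathfrak M_\hbar$ or equals $(0,0,0)$. Each $f^\e(t,x,\cdot)$ is Pauli-admissible for a.e. $(t,x)$, so Proposition \ref{prop:consist} applies pointwise in $(t,x)$ and gives
\[
 h_\e(t,x):=\|\calF^\e(f^\e)(t,x,\cdot)-\calF(f)(t,x,\cdot)\|_{L^1(\R^3_p)}\to0 \quad\text{for a.e. }(t,x).
\]
To integrate in $(t,x)$, I bound $h_\e\le N_{\calF^\e(f^\e)}+N_{\calF(f)}\le N_{f^\e}+N_f$. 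By \eqref{eq:den-glo}, $N_{f^\e}\to N_f$ strongly in $L^1((0,T)\times\R^3)$. The generalized dominated convergence theorem (or Vitali's theorem, since strongly convergent sequences are uniformly integrable) then gives
\[
 \calF^\e(f^\e)\to\calF(f)\quad\text{strongly in }L^1((0,T)\times\R^3\times\R^3).
\]
Only the unweighted $L^1$ norm is used here, which is why the global density convergence \eqref{eq:den-glo} suffices and no global control of the energy is needed.

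\emph{Passage to the limit.} Define $\tilde f$ by the right-hand side of \eqref{eq:mild-main}, using the limit $f$. Subtract it from \eqref{eq:reg-mild} and use the measure-preserving change of variables $y=x-(t-s)p$. This gives
\[
 \sup_{0\le t\le T}\|f^\e(t)-\tilde f(t)\|_{L^1}\le\int_0^T\|\calF^\e(f^\e)(s)-\calF(f)(s)\|_{L^1}\,\ds\to0.
\]
So $f^\e\to\tilde f$ in $C([0,T];L^1)$. Since $f^\e\rightharpoonup f$ weakly in $L^2_{\rm loc}$, the two limits agree and $f=\tilde f$ a.e. Continuity of $\tilde f$ into $L^1$ follows from the strong continuity of free transport and from $\calF(f)\in L^1$. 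We redefine $f$ to be this continuous representative. Then \eqref{eq:mild-main} holds for every $t$, and the Pauli bound follows from \eqref{eq:lim-pauli}.

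\emph{Main obstacle.} The delicate point is the a.e. application of Proposition \ref{prop:consist}. It needs the moment triple of the limit $f$ to be the pointwise limit of the moments of $f^\e$. This identification comes from the velocity-averaging and tail argument giving \eqref{eq:mom-ae}, which also covers saturated and vacuum points. A small technical point is that \eqref{eq:mom-ae} is stated on $(0,\infty)$ only, but integrating over $s\in(0,T)$ makes the endpoint $t=0$ irrelevant. Finally, the extracted subsequence is the same for all $T$ by a diagonal argument over $T\in\mathbb N$.
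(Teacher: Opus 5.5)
Your proposal is correct and follows the paper's proof essentially step for step. It obtains a.e.\ convergence of the local equilibria from Proposition~\ref{prop:consist}, upgrades this to strong $L^1$ convergence via the domination $h^\e\le N_{f^\e}+N_f$ together with \eqref{eq:den-glo}, and then passes to the limit in the mild formula using the isometry of free transport and identification with the weak limit; the paper writes out the generalized dominated convergence step by hand, applying Fatou's lemma to $N_{f^\e}+N_f-h^\e\ge0$. The only cosmetic difference is at vacuum limit points: you invoke Proposition~\ref{prop:consist} there, which is legitimate since $E_{f^\e}\to E_f=0$, whereas the paper uses $\|F^\e\|_{L^1}=N_{f^\e}/(1+\e N_{f^\e})\to0$ directly.
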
 
\begin{proof}
We split the proof into two steps. First, we prove convergence of the local equilibria in the collision terms. Second, we prove that this is enough to pass to the limit in the mild formulation \eqref{eq:reg-mild}.  

\textit{Step 1.} Set
\[
 F^\e(t,x,p):=\calF^\e(f^\e)(t,x,p),  \quad  F(t,x,p):=\calF(f)(t,x,p).
\]
Fix a point $(t,x)$ outside the null set in \eqref{eq:mom-ae}.  If $N_f(t,x)>0$, Proposition \ref{prop:consist} and \eqref{eq:mom-ae} give
\[
 \|F^\e(t,x,\cdot)-F(t,x,\cdot)\|_{L^1_2(\R^3_p)}\to0.
\]
If $N_f(t,x)=0$, then $P_f=E_f=0$ and
\[
 \|F^\e(t,x,\cdot)\|_{L^1(\R^3_p)}  =\frac{N_{f^\e}(t,x)}{1+\e N_{f^\e}(t,x)}\to0,
\]
thus the same conclusion holds in unweighted $L^1(\R^3_p)$, which is all that is needed below.  Thus, for almost every $(t,x)$,
\bq \label{eq:pt-L1p}
 \int_{\R^3}|F^\e(t,x,p)-F(t,x,p)|\,\dpp\to0.
\eq

Define
\[
 h^\e(t,x):=\int_{\R^3}|F^\e(t,x,p)-F(t,x,p)|\,\dpp.
\]
By the local mass formulas,
\[
 h^\e(t,x) \leq N_{F^\e}(t,x)+N_F(t,x) =\frac{N_{f^\e}(t,x)}{1+\e N_{f^\e}(t,x)}+N_f(t,x) \leq N_{f^\e}(t,x)+N_f(t,x).
\]
By \eqref{eq:den-glo} and the arguments preceding it, we know that 
\[
 N_{f^\e} + N_f \to 2N_f  \quad\text{a.e. and strongly in }  L^1((0,T)\times\R^3).
\]
Since $h^\e\to0$ almost everywhere by \eqref{eq:pt-L1p}, this implies
\begin{equation*}
    N_{f^\e} + N_f - h^\e \to 2N_f \quad \text{a.e.}
\end{equation*}
Therefore, we may apply Fatou's lemma to $N_{f^\e} + N_f - h^\e \ge 0$ to obtain
\begin{align*}
    \int_0^T \int_{\R^3} 2N_f \, \dx \dt  \le \liminf_{\e \downarrow 0} \int_0^T \int_{\R^3} (N_{f^\e} + N_f - h^\e) \, \dx \dt = 2 \int_0^T \int_{\R^3} N_f \,\dx \dt - \limsup_{\e\downarrow 0} \int_0^T \int_{\R^3} h^\e \,\dx \dt,
\end{align*}
which means $h^\e \to 0$ in $L^1((0,T)\times \R^3)$.  That is, we have shown
\bq \label{eq:F-str-glo}
 F^\e\to F \quad\text{strongly in }L^1((0,T)\times\R^3\times\R^3)
\eq
for every $T>0$. 

\textit{Step 2.} We now pass directly to the mild formulation. Let us define
\[
 \widetilde f(t,x,p) :=e^{-t}f_0(x-tp,p) +\int_0^te^{-(t-s)}F(s,x-(t-s)p,p)\,\ds.
\]
From \eqref{eq:reg-mild}, the change of variables $y=x-(t-s)p$, and \eqref{eq:F-str-glo},
\[
 \sup_{0\leq t\leq T}  \|f^\e(t)-\widetilde f(t)\|_{L^1(\R^3\times\R^3)} \leq\int_0^T\iint_{\R^3\times\R^3}  |F^\e(s,y,p)-F(s,y,p)|\,\dy\dpp\ds \to0.
\]
Hence, $f^\e\to\widetilde f$ in $C([0,T];L^1)$.  Since $f^\e\rightharpoonup f$ by \eqref{eq:weak-f}, we have $\widetilde f=f$, and hence
\[
 f(t,x,p)  =e^{-t}f_0(x-tp,p)  +\int_0^te^{-(t-s)}\calF(f)(s,x-(t-s)p,p)\,\ds.
\]
This proves \eqref{eq:mild-main}. 
\end{proof}

Since $0\leq F\leq\frac1\hbar $, the formula also gives
\[
 0\leq f(t,x,p)  \leq e^{-t}\frac1\hbar   +\frac1\hbar \int_0^te^{-(t-s)}\,\ds  =\frac1\hbar .
\]
Finally, lower semicontinuity of the uniform second-moment estimates gives
\bq \label{eq:lim-L12-t}
 \int_0^T\|f(t)\|_{L^1_2}\,\dt<\infty.
\eq
Exact local moment matching for $\calF(f)$ then gives the same time-integrability for the equilibrium.

%
%
%
%
%
%

\section{Conservation laws and entropy inequality}\label{sec:H-thm}

We complete the proof of Theorem \ref{thm:main} by recovering the exact conservation laws and the entropy inequality.  By \eqref{eq:lim-L12-t} and exact moment matching,
\[
 \int_0^T\lt(\|f(t)\|_{L^1_2}+\|\calF(f)(t)\|_{L^1_2}\rt) \dt<\infty  \quad\text{for every }T>0.
\]
Consequently, the mild formula implies $f(t)\in L^1_2$ for every $t\geq0$, and the weighted Fubini arguments below are justified.

We begin with the conservation laws, which follow from the exact local moment matching of the equilibrium.

\begin{lemma}\label{lem:conserv}
For every $t\geq0$,
\[
 \iint_{\R^3\times\R^3}(1,p,|p|^2)f(t,x,p)\,\dx\dpp  =\iint_{\R^3\times\R^3}(1,p,|p|^2)f_0(x,p)\,\dx\dpp.
\]
In particular,
\[
 \sup_{t\geq0}\|f(t)\|_{L^1_2}=\|f_0\|_{L^1_2}.
\]
\end{lemma}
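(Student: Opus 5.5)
We integrate the mild formula \eqref{eq:mild-main} against $\psi(p)\in\{1,p_1,p_2,p_3,|p|^2\}$ over $\R^3\times\R^3$. Because $\psi$ depends only on $p$, the change of variables $y=x-tp$ (resp.\ $y=x-(t-s)p$ for fixed $s$, $p$) preserves Lebesgue measure in $x$ and leaves $\psi(p)$ unchanged. Tonelli and Fubini are justified by $|\psi(p)|\le 1+|p|^2$, $f_0\in L^1_2$, and $\int_0^T\|\calF(f)(s)\|_{L^1_2}\,\ds<\infty$, which follows from \eqref{eq:lim-L12-t} and exact moment matching. We obtain, with $m_\psi(g):=\iint\psi g\,\dx\dpp$,
\[
 m_\psi(f(t))=e^{-t}m_\psi(f_0)+\int_0^te^{-(t-s)}m_\psi(\calF(f)(s))\,\ds .
\]

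Next we use the exact local moment identity. For a.e.\ $s$ and a.e.\ $x$, Proposition \ref{prop:vari} gives $\int\psi\,\calF(f)(s,x,p)\,\dpp=\int\psi f(s,x,p)\,\dpp$ when $N_f(s,x)>0$, while both sides vanish when $N_f(s,x)=0$. This requires $f(s,x,\cdot)$ to be Pauli-admissible with finite second moment for a.e.\ $(s,x)$, which holds by $0\le f\le\frac1\hbar$ and \eqref{eq:lim-L12-t}. Integrating in $x$ gives $m_\psi(\calF(f)(s))=m_\psi(f(s))$ for a.e.\ $s$. Hence $\phi(t):=m_\psi(f(t))$ satisfies
\[
 e^t\phi(t)=m_\psi(f_0)+\int_0^te^s\phi(s)\,\ds
\]
for every $t$, where $\phi\in L^1_{\rm loc}$. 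The right-hand side is absolutely continuous, so $e^t\phi(t)$ is absolutely continuous with derivative $e^t\phi(t)$ a.e. Setting $\Psi(t):=e^t\phi(t)-m_\psi(f_0)e^t$, we get $\Psi(0)=0$ and $\Psi'=\Psi$ a.e., so $\Psi(t)=\int_0^t\Psi$. Gr\"onwall then gives $\Psi\equiv0$, that is, $\phi(t)=m_\psi(f_0)$. Equivalently, $\phi-m_\psi(f_0)$ solves the homogeneous Volterra equation $\Psi(t)=\int_0^t\Psi$, whose only $L^1_{\rm loc}$ solution is zero.

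Applying this with $\psi=1$ and $\psi=|p|^2$ and adding gives $\|f(t)\|_{L^1_2}=\|f_0\|_{L^1_2}$ for every $t$, since $f\ge0$.

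The main subtlety is the integrability bookkeeping: the identity must hold for every $t$, not just a.e. This is handled because the mild formula holds for every $t$ in $L^1$, and its right-hand side is finite in $L^1_2$ for each $t$, as noted just before the lemma. A secondary point is measurability of $s\mapsto m_\psi(f(s))$, which follows from Fubini on $(0,T)\times\R^6$.
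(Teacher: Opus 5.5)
Your proposal is correct and follows the paper's proof: you integrate the mild formula against $(1,p,|p|^2)$, change variables along characteristics, invoke the exact local moment matching of Proposition \ref{prop:vari}, and conclude from the resulting linear Volterra identity, which you spell out more carefully than the paper does. One cosmetic slip: it is $e^t(\phi-m_\psi(f_0))$, not $\phi-m_\psi(f_0)$ itself, that satisfies $\Psi(t)=\int_0^t\Psi$. The latter satisfies the same equation with kernel $e^{-(t-s)}$, and in either form the only solution is zero.
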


\begin{proof}
 The mild formula \eqref{eq:mild-main} shows that
\begin{equation}\label{eq/ mild char}
		f(t,x+pt,p) = e^{-t} f_0(x,p) + \int_0^t e^{-(t-s)} \calF(f)(s,x+sp,p) \, \dd s.
\end{equation}
		We know from Proposition \ref{prop:vari} that
		\bq \label{eq/ mommom}
		\int_{\R^3}(1,p,|p|^2)\calF(f)(t,x,p)\,\dpp  =(N_f,P_f,E_f)(t,x)
		\eq
for almost every $(t,x)$. Therefore, we just multiply \eqref{eq/ mild char} by $(1,p,|p|^2)$, integrate over $\R^3\times\R^3$, and apply Tonelli's theorem along with change of variables to complete the proof.
\end{proof}

The entropy argument also requires control of spatial tails.  The next estimate propagates the spatial second moment.

\begin{lemma}\label{lem:x2-propa}
Assume in addition that
\[
 X_0:=\iint_{\R^3\times\R^3}|x|^2f_0(x,p)\,\dx\dpp<\infty.
\]
Set
\[
 X(t):=\iint_{\R^3\times\R^3}|x|^2f(t,x,p)\,\dx\dpp,  \quad  \calE_0:=\iint_{\R^3\times\R^3}|p|^2f_0(x,p)\,\dx\dpp.
\]
Then
\bq \label{eq:x2-bd}
 X(t) \leq 2(X_0 + t^2\calE_0) \quad\text{for every }t\geq0.
\eq
\end{lemma}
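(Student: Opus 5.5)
We will work directly from the mild formula \eqref{eq:mild-main}, written along characteristics as in \eqref{eq/ mild char}, exactly as in the proof of Lemma \ref{lem:conserv}. We multiply \eqref{eq:mild-main} by $|x|^2$, integrate over $\R^3\times\R^3$, and apply Tonelli's theorem; all integrands are nonnegative, so no integrability needs to be checked in advance. The change of variables $y=x-tp$ in the free-transport term, and $y=x-(t-s)p$ in the source term, gives
\[
 X(t)=e^{-t}\iint|y+tp|^2f_0(y,p)\,\dy\dpp+\int_0^te^{-(t-s)}\iint|y+(t-s)p|^2\calF(f)(s,y,p)\,\dy\dpp\ds.
\]
We then use the elementary bound $|y+\tau p|^2\le 2|y|^2+2\tau^2|p|^2$ with $\tau\le t$.

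In the source term we need the spatial second moment and the energy of $\calF(f)(s)$. Exact local moment matching \eqref{eq/ mommom} gives
\[
 \iint|y|^2\calF(f)(s)\,\dy\dpp=\int|y|^2N_f(s,y)\,\dy=X(s),
\]
and, by Lemma \ref{lem:conserv},
\[
 \iint|p|^2\calF(f)(s)\,\dy\dpp=\iint|p|^2f(s)=\calE_0.
\]
This yields the integral inequality
\[
 X(t)\le 2e^{-t}(X_0+t^2\calE_0)+2\int_0^te^{-(t-s)}\bigl(X(s)+(t-s)^2\calE_0\bigr)\ds.
\]
Closing this with Gronwall only gives exponential growth because of the factor $2$ in front of $X(s)$. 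That bound suffices for finiteness, but it is not \eqref{eq:x2-bd}.

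The main obstacle is therefore to avoid the lossy splitting. To get the sharp constant we keep the cross term instead of splitting. The quantity $\iint |y+\tau p|^2 g\,\dy\dpp$ equals
\[
 \int|y|^2N_g+2\tau\int y\cdot P_g+\tau^2\int E_g,
\]
and it depends on $g$ only through its local moments. Since $\calF(f)$ and $f$ share these moments, we may replace $\calF(f)(s)$ by $f(s)$ in the source term. Then
\[
 Y(t):=\iint |x-tp|^2 f(t,x,p)\,\dx\dpp
\]
is the natural quantity to control. Along the mild formula it satisfies $Y(t)=e^{-t}X_0+\int_0^te^{-(t-s)}Y(s)\,\ds$, because $|x-tp|$ evaluated at $x=y+(t-s)p$ equals $|y-sp|$. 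Hence $e^tY$ is constant, and $Y(t)=X_0$.

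Finally,
\[
 X(t)\le 2Y(t)+2t^2\iint|p|^2f(t)=2(X_0+t^2\calE_0)
\]
by Lemma \ref{lem:conserv}. The constancy computation is formal only if $Y$ could be infinite. To make it rigorous, we first apply it with the truncated weight $\min\{|x-tp|^2,K\}$, which cannot be reduced through moments, or we first derive finiteness of $X$ on $[0,T]$ from the crude Gronwall bound above with a truncated weight and monotone convergence. Once $X$ is known to be finite, all moment identities are justified and the exact argument goes through.
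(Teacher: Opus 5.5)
Your proposal is correct and is essentially the paper's proof. Like the paper, you work with $\iint|x-tp|^2f(t)\,\dx\dpp$, use the mild formula along characteristics together with local moment matching to replace $\calF(f)$ by $f$, bound this quantity by $X_0$, and conclude with $|x|^2\le 2|x-tp|^2+2t^2|p|^2$ and energy conservation. Your preliminary finiteness step via the spatial truncation $\min\{|x|^2,K\}$ (which, unlike $\min\{|x-tp|^2,K\}$, does pass through the densities) fills in a point the paper's Gr\"onwall step leaves implicit. One small slip: the renewal equation gives $e^tY(t)=X_0e^t$, so it is $Y$, not $e^tY$, that is constant.
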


\begin{proof}
Multiplying the formula \eqref{eq/ mild char} by $|x|^2$ and then integrating over $\R^3\times\R^3$, we obtain after a change of variables
\begin{align*}
    &\iint_{\R^3\times\R^3} |x-pt|^2 f(t,x,p) \, \dx \dpp\\
    &= \iint_{\R^3\times\R^3} e^{-t} |x|^2 f_0(x,p) \, \dx \dpp + \int_0^t \iint_{\R^3\times\R^3} e^{-(t-s)} |x-sp|^2 \calF(f)(s,x,p) \, \dd s \dx \dpp \\
    &= \iint_{\R^3\times\R^3} e^{-t} |x|^2 f_0(x,p) \, \dx \dpp + \int_0^t \iint_{\R^3\times\R^3} e^{-(t-s)} |x-sp|^2 f(s,x,p) \, \dd s \dx \dpp.
\end{align*}
Note that we applied Tonelli's theorem (all terms above are non-negative) as well as \eqref{eq/ mommom}. Gr\"onwall's lemma shows
\begin{align*}
    \iint_{\R^3\times\R^3} |x-pt|^2 f(t,x,p) \, \dx \dpp \le \iint_{\R^3\times\R^3} |x|^2 f_0(x,p) \, \dx \dpp = X_0.
\end{align*}
Then
\begin{align*}
    \iint_{\R^3\times\R^3}|x|^2 f \, \dx \dpp \le 2\iint_{\R^3\times\R^3}|x-pt|^2 f \, \dx \dpp + 2t^2 \iint_{\R^3\times\R^3}|p|^2 f \, \dx \dpp \le 2(X_0 + t^2 \calE_0),
\end{align*}
the last inequality owing to the previous estimate above, along with Lemma \ref{lem:conserv}. This proves \eqref{eq:x2-bd}.
\end{proof}

The spatial and kinetic moment bounds can now be converted into the entropy integrability needed for the H-theorem.

\begin{lemma}\label{lem:entr-integ}
Let $0\leq g\leq\frac1\hbar $ and assume
\[
 \iint_{\R^3\times\R^3}(1+|x|^2+|p|^2)g(x,p)\,\dx\dpp<\infty.
\]
Then
\bq \label{eq:entr-integ-bd}
 \iint_{\R^3\times\R^3}g|\log g|\,\dx\dpp  +\iint_{\R^3\times\R^3}|\eta_\hbar(g)|\,\dx\dpp  \leq C_\hbar\lt(  1+\iint_{\R^3\times\R^3}(1+|x|^2+|p|^2)g\,\dx\dpp  \rt).
\eq
\end{lemma}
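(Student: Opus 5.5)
The plan is the classical Carleman-type splitting of $g|\log g|$, using the Pauli bound to control the region where $g$ is large. Set $V(x,p):=|x|^2+|p|^2$ and $M:=\iint(1+|x|^2+|p|^2)g\,\dx\dpp$. Since $\eta_\hbar(g)=g\log g+\frac1\hbar(1-\hbar g)\log(1-\hbar g)$ and the second term satisfies $-g\le \frac1\hbar(1-\hbar g)\log(1-\hbar g)\le 0$ on $[0,\frac1\hbar]$, as already noted in the proof of Proposition \ref{prop:vari}, we have
\[
 |\eta_\hbar(g)|\le g|\log g|+g .
\]
So it suffices to bound $\iint g|\log g|$ by $C_\hbar(1+M)$.

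Next I would split $g|\log g|=(g\log g)^++(g\log g)^-$. For the positive part, $\log g\le \log\frac1\hbar$ on $\{g\ge1\}$, hence $(g\log g)^+\le(\log\frac1\hbar)^+\,g$, which integrates to at most $C_\hbar M$.

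For the negative part, supported on $\{g<1\}$, I would split further according to whether $g\ge e^{-V}$ or $g<e^{-V}$. On $\{e^{-V}\le g<1\}$ we have $-\log g\le V$, so $(g\log g)^-\le Vg$, which integrates to at most $M$. On $\{g<e^{-V}\}$, the function $z\mapsto -z\log z$ is increasing on $(0,e^{-1})$. If $V\ge1$, then $g<e^{-V}\le e^{-1}$ gives $-g\log g\le Ve^{-V}\le C e^{-V/2}$. If $V<1$, then $-g\log g\le e^{-1}$, and this occurs only on the bounded set $\{V<1\}$. In both cases the bound is at most $Ce^{-V/2}$, which is integrable over $\R^6$ because $V=|x|^2+|p|^2$; this is precisely where the spatial weight $|x|^2$ is needed. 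Summing the contributions gives \eqref{eq:entr-integ-bd}, with a universal additive constant plus $C_\hbar M$.

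The only delicate point is the vacuum tail region $\{g<e^{-V}\}$. It requires the weight to grow in both $x$ and $p$ so that $e^{-V/2}\in L^1(\R^3\times\R^3)$. Without the $|x|^2$ moment this step fails, which explains the extra hypothesis.
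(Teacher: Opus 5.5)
Your proof is correct and follows essentially the same route as the paper. In both arguments the Pauli bound controls $(g\log g)^+$, the split into $\{g\ge e^{-V}\}$ and $\{g<e^{-V}\}$ handles $(g\log g)^-$ via the monotonicity of $z\mapsto -z\log z$ on $(0,e^{-1}]$, and $|\frac1\hbar(1-\hbar g)\log(1-\hbar g)|\le g$ handles the second entropy term. The only cosmetic difference is that the paper takes $V=1+|x|^2+|p|^2$, so that $e^{-V}\le e^{-1}$ holds automatically and your case distinction between $V\ge1$ and $V<1$ becomes unnecessary.
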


\begin{proof}
Set $V(x,p):=1+|x|^2+|p|^2$.  Since $g\leq\frac1\hbar $, we obtain $ (g\log g)^+\leq C_\hbar g$. On $\{g\geq e^{-V}\}$, we get $ (g\log g)^-\leq Vg$. On $\{g<e^{-V}\}$, we have $e^{-V}\leq e^{-1}$ and the map $z\mapsto-z\log z$ is increasing on $(0,e^{-1}]$; hence
\[
 (g\log g)^-\leq Ve^{-V}.
\]
Since $Ve^{-V}\in L^1(\R^3\times\R^3)$, these estimates give the first term in \eqref{eq:entr-integ-bd}.  Finally, for $0\leq z\leq1$, we get $|(1-z)\log(1-z)|\leq z$, and thus
\[
 \lt|\frac1\hbar(1-\hbar g)\log(1-\hbar g)\rt|\leq g.
\]
This proves \eqref{eq:entr-integ-bd}.
\end{proof}

We now prove the entropy statement in Theorem \ref{thm:main}.  Recall the definitions of $\mathscr H_\hbar$ and $\mathscr D_\hbar$ from \eqref{eq:H-def}--\eqref{eq:D-def}.  By Lemmas \ref{lem:conserv}, \ref{lem:x2-propa}, and \ref{lem:entr-integ}, for every $T>0$,
\bq \label{eq:entr-unif-f}
 \sup_{0\leq t\leq T}  \iint_{\R^3\times\R^3}  \lt(f(t,x,p)|\log f(t,x,p)|+|\eta_\hbar(f(t,x,p))|\rt) \,\dx\dpp<\infty.
\eq
The same estimate holds with $f$ replaced by $\calF(f)$, since the two functions have the same local mass and kinetic energy and hence
\[
 \iint_{\R^3\times\R^3}|x|^2\calF(f)\,\dx\dpp  =\iint_{\R^3\times\R^3}|x|^2f\,\dx\dpp,  \quad  \iint_{\R^3\times\R^3}|p|^2\calF(f)\,\dx\dpp  =\calE_0.
\]

Since $\eta_\hbar'$ has logarithmic singularities at both endpoints of the Pauli interval, we first replace it by a bounded monotone approximation.  This is in the spirit of the renormalization method for weak kinetic equations; see, for example, \cite{DL89}. We truncate the entropy derivative, rather than the entropy itself, in order to preserve its monotonicity and hence the convexity of the resulting approximation. For $m\geq1$, define
\[
 \Theta_m(r):=\max\{-m,\min\{r,m\}\}
\]
and
\[
 \eta_{\hbar,m}(z)  :=\int_0^z\Theta_m \lt(\eta_\hbar'(s)\rt)\,\ds,  \quad  \eta_\hbar'(s)=\log\frac{s}{1-\hbar s},  \quad 0<s<\frac1\hbar .
\]
Since $\eta_\hbar'$ and $\Theta_m$ are nondecreasing, $\eta_{\hbar,m}$ is convex; moreover $\eta_{\hbar,m}\in C^1([0,\frac1\hbar ])$ and $|\eta_{\hbar,m}'|\leq m$.  Since $\eta_\hbar(0)=0$,
\[
 \eta_{\hbar,m}(z)\to\eta_\hbar(z)  \quad\text{for every }z\in[0,\frac1\hbar ].
\]
This convergence includes the endpoints.  Indeed,
\[
 |\eta_\hbar'(s)|  \le |\log s|+|\log(1-\hbar s)|.
\]
Both logarithmic singularities are integrable.  More precisely,
\[
 \int_0^{\frac1\hbar }|\log s|\,\ds<\infty,
\]
while the change of variables $r=1-\hbar s$ gives
\[
 \int_0^{\frac1\hbar }|\log(1-\hbar s)|\,\ds  =  \frac1\hbar\int_0^1|\log r|\,\dr  =  \frac1\hbar.
\]
Hence
\[
 |\log s|+|\log(1-\hbar s)|  \in L^1(0,\frac1\hbar ),
\]
and dominated convergence in the defining integral for $\eta_{\hbar,m}$ applies also at $z=\frac1\hbar $ (while $z=0$ is immediate). Furthermore, since $|\Theta_m(r)|\leq|r|$,
\bq \label{eq:entr-tru-dom}
 |\eta_{\hbar,m}(z)| \leq\int_0^z\lt(|\log s|+|\log(1-\hbar s)|\rt)\,\ds \leq C_\hbar z(1+|\log z|),  \quad 0\leq z\leq\frac1\hbar .
\eq

For almost every $(y,p)$, define
\[
 \widetilde f(t,y,p):=f(t,y+tp,p),  \quad  \widetilde F(t,y,p):=\calF(f)(t,y+tp,p).
\]
The mild formula gives
\[
 \widetilde f(t,y,p)  =e^{-t}f_0(y,p)+\int_0^te^{-(t-s)}\widetilde F(s,y,p)\,\ds.
\]
Since $\widetilde F\in L^1((0,T)\times\R^3\times\R^3)$, Fubini's theorem shows that for almost every $(y,p)$, the map $t\mapsto\widetilde f(t,y,p)$ is absolutely continuous on $[0,T]$ and
\[
\pa_t\widetilde f=\widetilde F-\widetilde f  \quad\text{for a.e. }t\in(0,T).
\]
Thus, the ordinary chain rule gives
\[
 \eta_{\hbar,m}(\widetilde f(t,y,p))  -\eta_{\hbar,m}(f_0(y,p))  =\int_0^t\eta_{\hbar,m}'(\widetilde f)  (\widetilde F-\widetilde f)\,\ds.
\]
Since $|\eta_{\hbar,m}'(\widetilde f)(\widetilde F-\widetilde f)|\leq m(\widetilde F+\widetilde f)$, we may integrate over $\R^3_y\times\R^3_p$.  The measure-preserving change of variables $x=y+sp$ gives
\bq \label{eq:trun-cha}
 \begin{aligned}
 &\iint_{\R^3\times\R^3}\eta_{\hbar,m}(f(t,x,p))\,\dx\dpp  -\iint_{\R^3\times\R^3}\eta_{\hbar,m}(f_0(x,p))\,\dx\dpp\\
 &\quad=  \int_0^t\iint_{\R^3\times\R^3}  \eta_{\hbar,m}'(f(s,x,p))  \lt(\calF(f)(s,x,p)-f(s,x,p)\rt)\,\dx\dpp\ds.
 \end{aligned}
\eq
By convexity of $\eta_{\hbar,m}$,
\[
 \eta_{\hbar,m}'(f)(\calF(f)-f)  \leq\eta_{\hbar,m}(\calF(f))-\eta_{\hbar,m}(f).
\]
Substitution in \eqref{eq:trun-cha} yields
\bq \label{eq:trun-H}
 \begin{aligned}
 &\iint_{\R^3\times\R^3}\eta_{\hbar,m}(f(t,x,p))\,\dx\dpp +\int_0^t\iint_{\R^3\times\R^3}  \lt(\eta_{\hbar,m}(f(s,x,p)) -\eta_{\hbar,m}(\calF(f)(s,x,p))\rt) \,\dx\dpp\ds\\
  &\quad \leq\iint_{\R^3\times\R^3}\eta_{\hbar,m}(f_0(x,p))\,\dx\dpp.
 \end{aligned}
\eq
By \eqref{eq:entr-tru-dom},
\[
 |\eta_{\hbar,m}(f)|  +  |\eta_{\hbar,m}(\calF(f))|  \leq  C_\hbar  \lt[  f(1+|\log f|)  +  \calF(f)\lt(1+|\log\calF(f)|\rt)  \rt].
\]
The right-hand side belongs to
\[
 L^1((0,T)\times\R^3\times\R^3)
\]
by \eqref{eq:entr-unif-f} and the corresponding estimate for $\calF(f)$. Thus, dominated convergence in \eqref{eq:trun-H} as $m\to\infty$ gives
\[
 \mathscr H_\hbar(f(t))  +\int_0^t\mathscr D_\hbar(f(s))\,\ds  \leq\mathscr H_\hbar(f_0).
\]
By Proposition \ref{prop:vari}, the local entropy gap defining $\mathscr D_\hbar$ is nonnegative for almost every $(t,x)$.  Moreover,
\[
 0\leq\mathscr D_\hbar(f(t))  \leq\iint_{\R^3\times\R^3}  \lt(|\eta_\hbar(f)|+|\eta_\hbar(\calF(f))|\rt)\,\dx\dpp,
\]
thus $\mathscr D_\hbar(f)\in L^1(0,T)$ by \eqref{eq:entr-unif-f}.  This proves \eqref{eq:H-thm-gap} and completes the proof of Theorem \ref{thm:main}.

%
%
%
%
%
%
\section{Semiclassical limit}\label{sec:semiclas}

We now let the quantum parameter tend to zero. In this section the dependence on $\hbar$ is displayed explicitly.  For $\hbar\geq0$, set
\[
 \beta_{k}^{\hbar}(c)  :=\int_{\R^3}\frac{|p|^k}{e^{|p|^2+c}+\hbar}\,\dpp,  \quad  \beta^{\hbar}(c)  :=\frac{\beta_0^{\hbar}(c)}{(\beta_2^{\hbar}(c))^{\frac35}}.
\]
For $\hbar=0$, this means
\[
 \beta_k^0(c)=e^{-c}\int_{\R^3}|p|^ke^{-|p|^2}\,\dpp.
\]

The first ingredient in the semiclassical identification is the convergence of the scalar Fermi--Dirac moment map to its classical counterpart.

\begin{lemma}\label{lem:beta-semic}
For every compact interval $I\Subset\R$,
\[
 \sup_{c\in I}|\beta^{\hbar}(c)-\beta^0(c)|\to0  \quad\text{as }\hbar\downarrow0,
\]
where
\bq \label{eq:beta-clas}
 \beta^0(c)  =e^{-2c/5}\lt(\frac{2\pi}{3}\rt)^{\frac35}.
\eq
Moreover, for $k=0,2$, $\beta_k^{\hbar}\to\beta_k^0$ uniformly on compact intervals.
\end{lemma}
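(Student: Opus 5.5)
We first prove the statement for $\beta_0^\hbar$ and $\beta_2^\hbar$ separately, and then obtain the one for $\beta^\hbar$ by continuity of the quotient map. Fix a compact interval $I=[c_1,c_2]$ and $0<\hbar\le1$. For $c\in I$ and $k\in\{0,2\}$,
\[
 |\beta_k^\hbar(c)-\beta_k^0(c)| = \int_{\R^3}|p|^k\Big(\frac{1}{e^{|p|^2+c}}-\frac{1}{e^{|p|^2+c}+\hbar}\Big)\dpp = \hbar\int_{\R^3}\frac{|p|^k}{e^{|p|^2+c}\,(e^{|p|^2+c}+\hbar)}\,\dpp .
\]
Since $e^{|p|^2+c}+\hbar\ge e^{|p|^2+c}$, this is at most
\[
 \hbar\, e^{-2c}\int_{\R^3}|p|^k e^{-2|p|^2}\,\dpp\le \hbar\, e^{-2c_1}C_k .
\]
Hence $\beta_k^\hbar\to\beta_k^0$ uniformly on $I$, with rate $O(\hbar)$. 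This gives the second assertion directly, with no need for dominated convergence.

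For the quotient, note that $\beta_0^0(c)=\pi^{3/2}e^{-c}$ and $\beta_2^0(c)=\tfrac32\pi^{3/2}e^{-c}$, because $\int|p|^2e^{-|p|^2}\dpp=\tfrac32\pi^{3/2}$. Therefore
\[
 \beta^0(c)=\pi^{3/2}e^{-c}\big(\tfrac32\pi^{3/2}\big)^{-3/5}e^{3c/5}=\pi^{3/5}\big(\tfrac23\big)^{3/5}e^{-2c/5},
\]
which is \eqref{eq:beta-clas}. To pass to $\beta^\hbar$, we use that on $I$ the limits are bounded above and below: $\beta_2^0\ge m:=\tfrac32\pi^{3/2}e^{-c_2}>0$ and $\beta_0^0\le \pi^{3/2}e^{-c_1}$. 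By the uniform convergence, $\beta_2^\hbar\ge m/2$ on $I$ for all small $\hbar$. Since $(x,y)\mapsto x y^{-3/5}$ is uniformly continuous (indeed Lipschitz) on the compact set $[0,2\pi^{3/2}e^{-c_1}]\times[m/2,2\tfrac32\pi^{3/2}e^{-c_1}]$, the uniform convergence of $\beta_0^\hbar,\beta_2^\hbar$ transfers to $\beta^\hbar$.

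No step here is a real obstacle. The only point needing care is the lower bound on $\beta_2^\hbar$, which keeps the $-3/5$ power away from its singularity; it follows from the first step and the explicit positivity of $\beta_2^0$ on $I$.
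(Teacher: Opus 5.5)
Your proposal is correct and follows essentially the same route as the paper. You bound $|\beta_k^{\hbar}-\beta_k^0|$ by $\hbar e^{-2c}\int_{\R^3}|p|^ke^{-2|p|^2}\,\dpp$, use the lower bound on $\beta_2^{\hbar}$ to pass to the quotient, and evaluate the Gaussian moments to get \eqref{eq:beta-clas}; your uniform-continuity step for the ratio simply spells out what the paper states in one line.
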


\begin{proof}
Fix $A>0$ and $|c|\leq A$.  For $k=0,2$,
\[
 |\beta_k^{\hbar}(c)-\beta_k^0(c)| =\int_{\R^3} |p|^k\frac{\hbar}{e^{|p|^2+c}(e^{|p|^2+c}+\hbar)}\,\dpp \leq \hbar e^{2A}\int_{\R^3}|p|^ke^{-2|p|^2}\,\dpp.
\]
Thus $\beta_k^{\hbar}\to\beta_k^0$ uniformly on $[-A,A]$.  On that interval $\beta_2^{\hbar}$ is bounded away from zero uniformly for small $\hbar$, so the same is true for the ratio defining $\beta^{\hbar}$.  Finally,
\[
 \int_{\R^3}e^{-|p|^2}\,\dpp=\pi^{\frac32},  \quad  \int_{\R^3}|p|^2e^{-|p|^2}\,\dpp=\frac32\pi^{\frac32},
\]
which gives \eqref{eq:beta-clas}.
\end{proof}

We shall use a standard compactness statement for velocity averages in the $L^1$ regime; see also \cite{GSR02}.  The following convenient tightness form is the variable-initial-data version of \cite[Proposition 4.1]{KS25}; its proof is identical, since the fixed initial datum in that proposition is used only through compactness of its free-transport contribution.

\begin{lemma}\label{lem:L1-aver}
Let $q\in(1,\infty)$, and let $h_n\geq0$ solve
\[
\pa_t h_n+p\cdot\nabla_x h_n=G_n^+-G_n^-,  \quad G_n^\pm\geq0.
\]
Assume on every finite time interval that
\begin{enumerate}[label=(\roman*)]
 \item $\{h_n\}$ is bounded in $L^\infty_tL^q_{x,p}$;
 \item $\{(1+|p|^2)h_n\}$ is bounded in $L^\infty_tL^1_{x,p}$;
 \item $\{G_n^\pm\}$ is bounded in $L^1_{t,x,p}$;
 \item the densities $\int h_n\,\dpp$ are uniformly tight in $x$, uniformly in time;
 \item $h_n(0)$ is relatively compact in $L^q(\R^3\times\R^3)$.
\end{enumerate}
Then for every $\psi\in C_c(\R^3_p)$, the averages
\[
 \int_{\R^3}\psi(p)h_n(t,x,p)\,\dpp
\]
are relatively compact in $L^1_{\mathrm{loc}}([0,\infty);L^1(\R^3_x))$.
\end{lemma}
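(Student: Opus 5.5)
We prove Lemma \ref{lem:L1-aver} by reducing it to a standard $L^q$ velocity-averaging statement, localized in $(t,x)$ and truncated in $p$. The three kinds of non-compactness are handled separately. Spatial infinity is controlled by the tightness hypothesis (iv). Large momenta do not enter, since $\psi$ has compact support. The regularity in $(t,x)$ comes from averaging.

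Fix $\psi\in C_c(\R^3_p)$ and $T>0$, and write $\rho_n^\psi:=\int\psi h_n\,\dpp$.

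\emph{Step 1: reduction to compactly supported sources.} Since $G_n^\pm$ are only bounded in $L^1$, they cannot serve directly as $L^q$ sources in an averaging lemma. Instead we split the solution using the mild (Duhamel) representation:
\[
h_n(t)=h_n(0)(x-tp,p)+\int_0^t (G_n^+-G_n^-)(s,x-(t-s)p,p)\,\ds .
\]
\emph{Free-transport term.} Hypothesis (v) and the continuity of free transport on $L^q$ give compactness in $C([0,T];L^q)$. Multiplying by $\psi$ and integrating in $p$ over a compact set then gives compactness in $L^q_{\rm loc}$ in $x$.

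\emph{Source term.} The source part has only an $L^1$-bounded right-hand side, so we use the $L^1$ averaging mechanism in the DiPerna--Lions--Meyer / Golse--Saint-Raymond form. Their key observation is that equiintegrability of $h_n$ substitutes for $L^q$ bounds. Here $h_n$ is bounded in $L^\infty_tL^q$ with $q>1$, which implies local equiintegrability in $(t,x,p)$. With equiintegrable $h_n$ and $L^1$-bounded $G_n^\pm$, the $L^1$ averaging lemma \cite{GSR02} gives relative compactness of $\rho_n^\psi$ in $L^1_{\rm loc}((0,T)\times\R^3_x)$.

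To apply it concretely, we localize with a smooth cutoff $\phi(t,x)$ supported in $(0,T)\times B_R$. The function $\phi h_n$ solves a transport equation with source $\phi(G_n^+-G_n^-)+h_n(\pa_t+p\cdot\nabla_x)\phi$, which is bounded in $L^1_{\rm loc}$ on $\operatorname{supp}\psi$.

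\emph{Step 2: near $t=0$ and globalization.}

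\emph{Near $t=0$.} The interior cutoff misses a strip $(0,\tau)$. By (i) and the compact support of $\psi$, $\|\rho_n^\psi\|_{L^1((0,\tau)\times B_R)}\le C\tau R^{3(1-1/q)}$, which is uniformly small. This yields compactness in $L^1_{\rm loc}([0,T)\times\R^3)$.

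\emph{Spatial infinity.} We have $|\rho_n^\psi|\le\|\psi\|_\infty\int h_n\,\dpp$, so (iv) gives
\[
\sup_n\int_0^T\!\!\int_{|x|>R}|\rho_n^\psi|\,\dx\dt\to0 \quad\text{as } R\to\infty.
\]
A diagonal argument over $R\to\infty$ then upgrades local compactness in $x$ to compactness in $L^1((0,T);L^1(\R^3_x))$.

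\emph{Role of (ii).} Hypothesis (ii) is used only to guarantee that all the integrals are finite and that the $L^1$-truncation arguments are uniform. It matters when $\psi$ is later replaced by polynomial weights, but not in this lemma.

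\emph{Main obstacle.} The hard step is the $L^1$ averaging with variable initial data. The standard lemma in \cite{KS25} is formulated with a fixed datum, and that datum is used only through the compactness of its free-transport contribution. We isolate exactly that contribution in Step 1 via the Duhamel splitting, and (v) supplies its compactness. Everything else in \cite[Proposition 4.1]{KS25} carries over verbatim.
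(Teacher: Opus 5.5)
Your proof is correct, and its closing paragraph is precisely the paper's proof. The paper invokes \cite[Proposition 4.1]{KS25} and observes that, by (v) and the $L^q$-isometry of free transport, the free-transport contribution of the variable data $h_n(0)$ has the compactness that a fixed datum trivially has.

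Your concrete Steps 1--2, however, follow a slightly different line, and it is worth seeing what that buys. You apply the $L^1$ averaging lemma of \cite{GSR02} to $\phi h_n$, i.e.\ to the \emph{full} solution, with $\phi$ supported away from $t=0$. You then discard the strip $(0,\tau)$ by a uniform $O(\tau)$ bound, which is legitimate because the target topology is $L^1$ in time. In that argument neither the Duhamel splitting of Step 1 nor hypothesis (v) is ever used:
\begin{itemize}
\item interior compactness needs only local equiintegrability from (i) and an $L^1_{\rm loc}$ bound on $\phi(G_n^+-G_n^-)+h_n(\pa_t+p\cdot\nabla_x)\phi$ from (i)--(iii);
\item the strip near $t=0$ is controlled by (i) or (ii);
\item spatial infinity is controlled by (iv).
\end{itemize}
So your route actually proves the lemma without (v). The paper's route, inherited from \cite{KS25}, keeps the Duhamel split, in which the initial datum enters only through its free-transport contribution. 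That is why compactness of that contribution must be supplied by (v) there.

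Two points of presentation need fixing. First, Step 1 announces that averaging is applied to the source part, but you then apply it to $h_n$. You should either drop Step 1, or apply the averaging lemma to the Duhamel part $g_n$, whose $L^\infty_tL^q$ bound then follows from (i) and (v). Second, take $\phi\equiv1$ on $[\tau,T]\times B_R$ with support in $(0,T+1)\times B_{R+1}$, so that no strip near $t=T$ is lost.
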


\begin{proof}
This is a variant of \cite[Proposition 4.1]{KS25} in which the fixed initial datum is replaced by a relatively compact family in $L^q$. Indeed, after extraction, assumption (v) gives
\[
 h_n(0) \to h^{\rm in}  \quad\text{strongly in }L^q(\R^3 \times \R^3).
\]
Since free transport is an isometry in $L^q$,
\[
 \sup_{0\le t\le T}  \|  h_n(0,x-tp,p)-h^{\rm in}(x-tp,p)\|_{L^q_{x,p}}  =  \|h_n(0)-h^{\rm in}\|_{L^q_{x,p}} \to 0
\]
for every $T>0$.  Thus the free-transport contribution has the same compactness property used in the proof of \cite[Proposition 4.1]{KS25}, while the remaining truncation, tightness, and velocity-tail arguments are unchanged.
\end{proof}

For a nonnegative classical distribution $g$ with finite second moment, we recall the local Maxwellian $M(g)$:
\[
 M(g)(t,x,p)=
 \begin{cases}
 \displaystyle  \frac{N_g}{(2\pi T_g)^{\frac32}} \exp \lt(-\frac{|p-u_g|^2}{2T_g}\rt),&N_g>0,\\[8pt]
 0,&N_g=0,
 \end{cases}
\]
where
\[
 u_g=\frac{P_g}{N_g},  \quad  T_g=\frac{E_g-|P_g|^2/N_g}{3N_g}
\]
on the non-vacuum set.

\begin{proof}[Proof of Theorem \ref{thm:semi}]

We divide the proof into several steps.

\emph{Step 1: uniform moment and entropy bounds.}
The uniform moment assumptions in \eqref{eq:semi-ini-b} give uniform bounds for the initial mass, spatial second moment, and kinetic energy.  By Theorem \ref{thm:main}, mass and kinetic energy are conserved.  Thus, Lemma \ref{lem:x2-propa} yields, for every $T>0$,
\bq \label{eq:semi-mom}
 \sup_n\sup_{0\leq t\leq T}  \iint_{\R^3\times\R^3}(1+|x|^2+|p|^2)f_n(t,x,p)\,\dx\dpp  \leq C_T.
\eq
The entropy statement of Theorem \ref{thm:main} applies to each $f_n$.  Since
\[
 -f_n\leq  \frac1{\hbar_n}(1-\hbar_nf_n)\log(1-\hbar_nf_n)  \leq0,
\]
we have
\begin{align*}
 \iint_{\R^3\times\R^3} f_n(t)\log f_n(t)\,\dx\dpp  &\leq \mathscr H_{\hbar_n}(f_n(t))  +\iint_{\R^3\times\R^3} f_n(t)\,\dx\dpp\\
 &\leq \mathscr H_{\hbar_n}(f_{0,n})+M_{0,n} \leq \iint_{\R^3\times\R^3} f_{0,n}\log f_{0,n}\,\dx\dpp+M_{0,n}  \leq C.
\end{align*}
The negative part of $f_n\log f_n$ is controlled uniformly by the argument used in Lemma \ref{lem:entr-integ}, now with a constant independent of $\hbar_n$, since only \eqref{eq:semi-mom} enters that estimate.  Hence
\bq \label{eq:semi-LlogL}
 \sup_n\sup_{0\leq t\leq T}  \iint_{\R^3\times \R^3} f_n(t,x,p)|\log f_n(t,x,p)|\,\dx\dpp  \leq C_T.
\eq
In particular the family $\{f_n\}$ is uniformly integrable in phase space.  Together with the moment bounds, which give tightness in $(x,p)$, the Dunford--Pettis theorem yields, after extraction,
\bq \label{eq:semi-weak}
 f_n\rightharpoonup f  \quad\text{weakly in }L^1((0,T)\times\R^3\times\R^3)
\eq
for every $T>0$, along a diagonal subsequence.

\emph{Step 2: compactness of compactly supported velocity averages.}  To recover compactness without an $\hbar_n$-uniform $L^\infty$ bound, we use a bounded renormalization of the distribution.  Such nonlinear renormalizations are classical in the weak compactness theory for kinetic equations; see, in particular, \cite{DL89,Mis10,MP97}.  Here this idea is combined with the $L^1$ velocity-averaging framework recalled in Lemma \ref{lem:L1-aver}.

Fix $\delta>0$, and define
\[
 \Gamma_\delta(z):=\frac{z}{1+\delta z},  \quad  h_{n,\delta}:=\Gamma_\delta(f_n).
\]
Since
\[
 \Gamma_\delta'(z)=\frac1{(1+\delta z)^2},
\]
we have
\bq \label{eq:Gam-deri-bd}
 0\leq \Gamma_\delta'(z)\leq1  \quad\text{for all }z\geq0.
\eq
Since $f_n$ is a mild solution, the chain rule along characteristics gives
\bq \label{eq:semi-renorm}
 (\pa_t+p\cdot\nabla_x)h_{n,\delta}  =  G_{n,\delta}^+-G_{n,\delta}^-,
\eq
where
\[
 h_{n,\delta} := \Gamma_\delta(f_n) , \quad G_{n,\delta}^+  :=  \Gamma_\delta'(f_n)\calF^{\hbar_n}(f_n),  \quad  G_{n,\delta}^-  :=  \Gamma_\delta'(f_n)f_n.
\]

We first record the bounds needed to apply the velocity averaging
lemma.  Since $f_n\geq0$,
\bq \label{eq:h-delta-pt}
 0\leq h_{n,\delta}  \leq \min\{f_n,\delta^{-1}\}.
\eq
In particular, for every $t\geq0$,
\[
 \iint_{\R^3\times\R^3}  h_{n,\delta}(t,x,p)\,\dx\dpp  \leq  \iint_{\R^3\times\R^3}  f_n(t,x,p)\,\dx\dpp  =  \iint_{\R^3\times\R^3}  f_{0,n}(x,p)\,\dx\dpp  =:M_{0,n}.
\]
By the assumptions on the initial data,
\[
 \sup_n M_{0,n}<\infty.
\]
Thus, for every fixed $\delta>0$ and every $1<q<\infty$,
\[
 \|h_{n,\delta}(t)\|_{L^q(\R^3\times\R^3)}^q  =  \iint_{\R^3\times\R^3}  h_{n,\delta}^q\,\dx\dpp  \leq  \delta^{-(q-1)}  \iint_{\R^3\times\R^3}  h_{n,\delta}\,\dx\dpp  \leq  C\delta^{-(q-1)}.
\]
Consequently, for every $T>0$,
\[
 \sup_n  \|h_{n,\delta}\|_{L^\infty(0,T;L^q(\R^3\times\R^3))}  \leq C_{\delta,q}.
\]
At this moment, $\delta>0$ is fixed, so the constants are not required to remain bounded as $\delta\downarrow0$.

Moreover, since $h_{n,\delta}\leq f_n$, the velocity second moment satisfies
\[
 \sup_n\sup_{0\leq t\leq T}  \iint_{\R^3\times\R^3}  |p|^2h_{n,\delta}(t,x,p)\,\dx\dpp  \leq C_T
\]
by \eqref{eq:semi-mom}.  The source terms in \eqref{eq:semi-renorm} are uniformly bounded in $L^1((0,T)\times\R^3\times\R^3)$.  Indeed, \eqref{eq:Gam-deri-bd} gives
\[
 0\leq G_{n,\delta}^-\leq f_n,
\]
and hence
\[
 \int_0^T\iint_{\R^3\times\R^3}  G_{n,\delta}^-\,\dx\dpp\dt  \leq  \int_0^T\iint_{\R^3\times\R^3}  f_n\,\dx\dpp\dt  \leq C T.
\]
Similarly,
\[
 0\leq  G_{n,\delta}^+  \leq  \calF^{\hbar_n}(f_n).
\]
Since the Fermi--Dirac equilibrium has the same local mass as $f_n$,
\[
 \int_{\R^3}  \calF^{\hbar_n}(f_n)(t,x,p)\,\dpp  =  N_{f_n}(t,x),
\]
and thus
\[
 \int_0^T\iint_{\R^3\times\R^3}  G_{n,\delta}^+\,\dx\dpp\dt  \leq  \int_0^T\iint_{\R^3\times\R^3}  \calF^{\hbar_n}(f_n)\,\dx\dpp\dt =  \int_0^T\iint_{\R^3\times\R^3}  f_n\,\dx\dpp\dt  \leq CT.
\]

We also have spatial tightness for $h_{n,\delta}$.  Indeed, by \eqref{eq:h-delta-pt},
\[
 \int_{\R^3}h_{n,\delta}(t,x,p)\,\dpp  \leq N_{f_n}(t,x),
\]
and hence \eqref{eq:semi-mom} gives
\[
 \sup_n\sup_{0\leq t\leq T}  \int_{|x|>R}\int_{\R^3}  h_{n,\delta}(t,x,p)\,\dpp\dx  \leq \frac{C_T}{R^2}.
\]

It remains to verify compactness of the initial data for the renormalized equation.  Since $\Gamma_\delta$ is one-Lipschitz by \eqref{eq:Gam-deri-bd}, \eqref{eq:semi-ini-conv} yields
\[
 \Gamma_\delta(f_{0,n}) \to  \Gamma_\delta(f_0)  \quad\text{strongly in }  L^1(\R^3\times\R^3).
\]
Moreover,
\[
 0\leq  \Gamma_\delta(f_{0,n}),  \Gamma_\delta(f_0)  \leq\delta^{-1},
\]
so for every $1<q<\infty$,
\[
 \|\Gamma_\delta(f_{0,n})        -\Gamma_\delta(f_0)\|_{L^q(\R^3\times\R^3)}^q \leq  \lt(\frac{2}{\delta}\rt)^{q-1}  \|\Gamma_\delta(f_{0,n})        -\Gamma_\delta(f_0)\|_{L^1(\R^3\times\R^3)} \to0.
\]
Thus
\[
 \Gamma_\delta(f_{0,n}) \to  \Gamma_\delta(f_0)  \quad\text{strongly in }  L^q(\R^3\times\R^3)  \quad(1<q<\infty).
\]

Then, we may apply Lemma \ref{lem:L1-aver} to \eqref{eq:semi-renorm}.  For every $\psi\in C_c(\R^3_p)$, the sequence
\bq \label{eq:reno-av}
 \lt\{  \int_{\R^3}  \psi(p)h_{n,\delta}(t,x,p)\,\dpp  \rt\}_n  =  \lt\{  \int_{\R^3}  \psi(p)\Gamma_\delta(f_n)(t,x,p)\,\dpp  \rt\}_n
\eq
is relatively compact in $ L^1_{\rm loc}([0,\infty)\times\R^3_x)$ for every fixed $\delta>0$.

We now remove the renormalization.  Since
\[
 0\leq  f_n-\Gamma_\delta(f_n)  =  \frac{\delta f_n^2}{1+\delta f_n}  \leq f_n,
\]
fix $A>e$ and split the phase space into $\{f_n\leq A\}$ and $\{f_n>A\}$.  On the first set,
\[
 \frac{\delta f_n^2}{1+\delta f_n}  \leq\delta f_n^2  \leq\delta A f_n.
\]
On the second set we simply use $f_n-\Gamma_\delta(f_n)\leq f_n$.  Hence
\bq \label{eq:reno-spl}
\int_0^T  \iint_{\R^3\times\R^3}  \lt(f_n-\Gamma_\delta(f_n)\rt) \dx\dpp\dt   \leq  \delta A  \int_0^T  \iint_{\R^3\times\R^3}  f_n\,\dx\dpp\dt  +  \int_0^T  \iint_{\{f_n>A\}}  f_n\,\dx\dpp\dt.
\eq
Since $A>e$, on $\{f_n>A\}$ we have
\[
 f_n  \leq  \frac{f_n\log f_n}{\log A},
\]
and thus \eqref{eq:semi-LlogL} gives
\[
 \int_0^T  \iint_{\{f_n>A\}}f_n\,\dx\dpp\dt  \leq  \frac{C_T}{\log A}.
\]
Combining this with the uniform mass bound in \eqref{eq:reno-spl}, we obtain
\[
 \sup_n  \|f_n-\Gamma_\delta(f_n)\|_  {L^1((0,T)\times\R^3_x\times\R^3_p)}  \leq  C_T\lt(  \delta A+\frac1{\log A}  \rt).
\]
For $0<\delta<e^{-2}$, choose $ A=\delta^{-\frac12}$. Then
\[
 \delta A=\delta^{\frac12},  \quad  \frac1{\log A}  =\frac{2}{|\log\delta|},
\]
and hence
\bq \label{eq:rem-reno}
 \lim_{\delta\downarrow0}  \sup_n  \|f_n-\Gamma_\delta(f_n)\|_  {L^1((0,T)\times\R^3\times\R^3)}  =0.
\eq

We finally transfer the compactness from the renormalized averages to the original ones.  Let $\psi\in C_c(\R^3_p)$, $T>0$, and let $K\subset\R^3_x$ be compact.  For every $n,m$,
\begin{align*}
 \lt\|  \int_{\R^3}\psi(p)  \lt(f_n-f_m\rt)(t,x,p)\,\dpp  \rt\|_{L^1((0,T)\times K)}
 & \leq  \|\psi\|_{L^\infty}  \|f_n-\Gamma_\delta(f_n)\|_  {L^1((0,T)\times\R^3\times\R^3)}  \\
 &\quad+  \lt\|  \int_{\R^3}\psi(p)  \lt(  \Gamma_\delta(f_n)-\Gamma_\delta(f_m)  \rt)\,\dpp  \rt\|_{L^1((0,T)\times K)}  \\
 &\quad+ \|\psi\|_{L^\infty} \|f_m-\Gamma_\delta(f_m)\|_{L^1((0,T)\times\R^3\times\R^3)}.
\end{align*}
Given any $\eta>0$, we first choose $\delta>0$ so small that the first and third terms are uniformly smaller than $\eta$, using \eqref{eq:rem-reno}.  For this fixed $\delta$, the middle term is compact by \eqref{eq:reno-av}.  This proves that
\[
 \lt\{  \int_{\R^3}  \psi(p)f_n(t,x,p)\,\dpp  \rt\}_n
\]
is relatively compact in $ L^1_{\rm loc}([0,\infty)\times\R^3_x)$ for every $\psi\in C_c(\R^3_p)$.

\emph{Step 3: strong compactness of the macroscopic moments.}
Once Step 2 provides compactness of the velocity averages against compactly supported functions of $p$, we can use the same velocity-tail argument as in Section \ref{sec:pass-eps}.  The uniform kinetic-energy bound \eqref{eq:semi-mom} controls the tails of the density and momentum.  For the energy moment, Lemma \ref{lem:3-mom}, applied to the mild equation for $f_n$ and using the exact kinetic-energy matching of $\calF^{\hbar_n}(f_n)$, gives, for every $T,R>0$,
\[
 \sup_n  \int_0^T\int_{B_R}\int_{\R^3}  |p|^3f_n(t,x,p)\,\dpp\dx\dt  \leq C_{T,R}.
\]
Consequently,
\[
 \sup_n  \int_0^T\int_{B_R}\int_{|p|>L}  |p|^2f_n(t,x,p)\,\dpp\dx\dt  \leq\frac{C_{T,R}}{L}.
\]
Combining these tail estimates with the compactness obtained in Step 2 yields, after extraction,
\[
 (N_{f_n},P_{f_n},E_{f_n})  \to  (N_f,P_f,E_f)  \quad\text{strongly in }  L^1_{\rm loc}((0,T)\times\R^3_x),
\]
and hence almost everywhere in $(t,x)$.

The spatial second-moment bound in \eqref{eq:semi-mom} gives uniform spatial tightness of the densities.  Together with exact mass conservation and \eqref{eq:semi-ini-conv}, the argument leading to \eqref{eq:den-glo} gives
\bq \label{eq:semi-den-glo}
 N_{f_n}\to N_f
 \quad\text{strongly in }L^1((0,T)\times\R^3_x).
\eq
 
\emph{Step 4: identification of the limit of the  quantum equilibria.}
Fix a point $(t,x)$ at which the moment convergence holds.  If $N_f(t,x)=0$, then
\[
 \|\calF^{\hbar_n}(f_n)(t,x,\cdot)\|_{L^1(\R^3_p)}  =N_{f_n}(t,x)\to0,  \quad M(f)(t,x,\cdot)=0.
\]

Assume now $N_f(t,x)>0$.  Then
\[
 I_f:=E_f-\frac{|P_f|^2}{N_f}>0.
\]
Indeed, if $I_f=0$, then
\[
 \int_{\R^3}\lt|p-\frac{P_f}{N_f}\rt|^2f(t,x,p)\,\dpp=0,
\]
and thus the $L^1(\R^3_p)$-function $f(t,x,\cdot)$ is supported on a single point, hence vanishes almost everywhere, contradicting $N_f>0$.
Consequently,
\begin{equation} \label{eq/ bn1}
 B_n:=\frac{N_{f_n}}{(E_{f_n}-|P_{f_n}|^2/N_{f_n})^{\frac35}}  \to  B:=\frac{N_f}{I_f^{\frac35}}\in(0,\infty).
\end{equation}
On the other hand,
\begin{equation}\label{eq/ bn2}
 \beta^{\hbar_n}(-\infty) = \frac{5^{\frac35}(4\pi)^{\frac25}}{3\hbar_n^{\frac25}}  \to\infty.
\end{equation}
Eqs. \eqref{eq/ bn1}--\eqref{eq/ bn2} imply that
\begin{equation*}
    \sup \lt\{ n\in \mathbb{N} : B_n = \beta^{\hbar_n}(-\infty) \rt\}  < \infty,
\end{equation*} 
in other words for all sufficiently large $n$, the moment triple of $f_n(t,x,\cdot)$ lies in the interior branch $B_n<\beta^{\hbar_n}(-\infty)$. In
particular, for all large $n$ there is a finite $c_n\in \R$ such that
\[
 \beta^{\hbar_n}(c_n)=B_n.
\]

We now prove that $c_n\to  (\beta^0)^{-1}(B)$. First, we show that $\{c_n\}$ is compact in $\R$. Choose $c_-<c_+$ such that
\[
 \beta^0(c_-)>2B,  \quad  \beta^0(c_+)<\frac{B}{2}.
\]
By Lemma \ref{lem:beta-semic}, for all large $n$,
\[
 \beta^{\hbar_n}(c_-)>\frac32B,  \quad  \beta^{\hbar_n}(c_+)<\frac34B,  \quad  \frac34B<B_n<\frac54B.
\]
Since $\beta^{\hbar_n}$ is strictly decreasing, this implies
\[
 c_-<c_n<c_+.
\]
Thus $\{c_n\}$ is relatively compact in $\R$. Let us consider any convergent subsequence $c_{n_j}\to c_*$. Then the local uniform convergence result of Lemma \ref{lem:beta-semic} shows
\begin{align*}
 |\beta^{\hbar_{n_j}}(c_{n_j}) - \beta^0(c_*)| \le |\beta^{\hbar_{n_j}}(c_{n_j}) - \beta^0(c_{n_j})| + |\beta^0(c_{n_j}) - \beta^0(c_*)| \to 0.
\end{align*}
That is, $\beta^{\hbar_{n_j}}(c_{n_j})\to \beta^0(c_*)$. However we also know from \eqref{eq/ bn1} that $\beta^{\hbar_{n_j}}(c_{n_j}) = B_{n_j} \to B$. Therefore it must be that $c_* = (\beta^0)^{-1}(B)$, and the uniqueness of the limit shows that the full sequence converges:
\[
 c_n\to c :=(\beta^0)^{-1}(B).
\]

Furthermore,
\[
 u_n:=\frac{P_{f_n}}{N_{f_n}}\to u:=\frac{P_f}{N_f},
\]
and Lemma \ref{lem:beta-semic} gives
\[
 a_n:=(\beta_0^{\hbar_n}(c_n))^{\frac23}N_{f_n}^{-\frac23}  \to a:=(\beta_0^0(c))^{\frac23}N_f^{-\frac23}.
\]
Writing $T_f:=\frac{I_f}{3N_f}$, a direct calculation from \eqref{eq:beta-clas} gives
\[
 e^{-c}=\frac{N_f}{(2\pi T_f)^{\frac32}},  \quad  a=\frac1{2T_f}.
\]
Thus, for almost every $p$,
\[
 \calF^{\hbar_n}(f_n)(t,x,p)  \to  \frac{N_f}{(2\pi T_f)^{\frac32}}  e^{-|p-u|^2/(2T_f)}  =M(f)(t,x,p).
\]
Since the masses also converge,
\[
 \int_{\R^3}\calF^{\hbar_n}(f_n)\,\dpp=N_{f_n}\to N_f=\int_{\R^3} M(f)\,\dpp,
\]
Lemma \ref{lem:pos-L1}, applied on $\R^3_p$, gives
\bq \label{eq:semi-pt-eql}
 \|\calF^{\hbar_n}(f_n)(t,x,\cdot)-M(f)(t,x,\cdot)\|_{L^1(\R^3_p)}\to0
\eq
for almost every $(t,x)$, including the vacuum set by the preceding argument.

Moreover,
\[
 \int_{\R^3}\lt|  \calF^{\hbar_n}(f_n)(t,x,p)-M(f)(t,x,p)\rt|\,\dpp  \leq  N_{f_n}(t,x)+N_f(t,x).
\]
By \eqref{eq:semi-den-glo},
\[
 N_{f_n}+N_f\to2N_f  \quad\text{strongly in }L^1((0,T)\times\R^3_x).
\]
Combining this with the almost-everywhere convergence \eqref{eq:semi-pt-eql}, and using the same argument as in the proof of \eqref{eq:F-str-glo}, we obtain
\bq \label{eq:semi-F-str}
 \calF^{\hbar_n}(f_n)\to M(f)  \quad\text{strongly in }L^1((0,T)\times\R^3\times\R^3).
\eq

\emph{Step 5: passage to the mild equation.}
For every $n$,
\[
 f_n(t,x,p) =e^{-t}f_{0,n}(x-tp,p) +\int_0^te^{-(t-s)}\calF^{\hbar_n}(f_n) (s,x-(t-s)p,p)\,\ds.
\]
Define
\[
 \widetilde f(t,x,p) = e^{-t}f_0(x-tp,p) +\int_0^te^{-(t-s)}M(f)(s,x-(t-s)p,p)\,\ds.
\]
By the changes of variables $y=x-tp$ and $y=x-(t-s)p$, \eqref{eq:semi-ini-conv}, and \eqref{eq:semi-F-str},
\[
 \sup_{0\leq t\leq T}\|f_n(t)-\widetilde f(t)\|_{L^1(\R^3\times\R^3)} \leq\|f_{0,n}-f_0\|_{L^1(\R^3\times\R^3)} +\int_0^T\|\calF^{\hbar_n}(f_n)(s)-M(f)(s)\|_{L^1(\R^3\times\R^3)}\,\ds \to0.
\]
Thus $f_n\to\widetilde f$ strongly in $C([0,T];L^1)$.  Comparing with the weak limit \eqref{eq:semi-weak} gives $\widetilde f=f$, proving \eqref{eq:classical-mild} and \eqref{eq:semi-f-str}.  A diagonal extraction in $T$ completes the proof.
\end{proof}

We conclude the semiclassical analysis by verifying the canonical approximation stated in Corollary \ref{cor:canon}.  The Pauli truncation converges strongly to the prescribed classical datum while preserving the required moment and entropy bounds uniformly in $\hbar$.

\begin{proof}[Proof of Corollary \ref{cor:canon}]
Fix a sequence $\hbar_n\downarrow0$ and set
\[
 f_{0,n}:=f_0^{\hbar_n}.
\]
Since
\[
 0\leq f_0^\hbar\leq f_0,  \quad  f_0^\hbar\to f_0  \quad\text{a.e.},
\]
dominated convergence gives
\[
 \|f_0^\hbar-f_0\|_{L^1(\R^3\times\R^3)}\to0.
\]
Moreover,
\[
 \sup_{0<\hbar\leq1}  \iint_{\R^3\times\R^3}  (1+|x|^2+|p|^2)f_0^\hbar\,\dx\dpp  \leq  \iint_{\R^3\times\R^3}  (1+|x|^2+|p|^2)f_0\,\dx\dpp.
\]
Since truncation occurs only on $\lt\{f_0>\frac1\hbar\rt\}$, where $\hbar^{-1}\geq1$, monotonicity of $z\mapsto z\log z$ on $[1,\infty)$ gives
\[
 f_0^\hbar|\log f_0^\hbar|  \leq  f_0|\log f_0|.
\]
Thus the uniform entropy bound also holds, and Theorem \ref{thm:semi} applies.
\end{proof}

%
%
%
%
%
%

\section*{Acknowledgments}
The work of Y.-P. Choi and S. Song was supported by NRF grants no. 2022R1A2C1002820 and RS-2024-00406821. The work of B.-H. Hwang was supported by the National Research Foundation of Korea(NRF) grant funded by the Korean goverment(MSIT) RS-2026-25475225.

%
%
%
%
%
%
\bibliographystyle{abbrv}
\bibliography{FDBGK}

@article {Yun10,
    AUTHOR = {Yun, Seok-Bae},
     TITLE = {Cauchy problem for the {B}oltzmann-{BGK} model near a global
              {M}axwellian},
   JOURNAL = {J. Math. Phys.},
  FJOURNAL = {Journal of Mathematical Physics},
    VOLUME = {51},
      YEAR = {2010},
    NUMBER = {12},
     PAGES = {123514, 24},
      ISSN = {0022-2488,1089-7658},
   MRCLASS = {82C40 (35Q20)},
  MRNUMBER = {2779616},
MRREVIEWER = {Cecil\ Pompiliu\ Gr\"unfeld},
       DOI = {10.1063/1.3516479},
       URL = {https://doi.org/10.1063/1.3516479},
}

@article {Yun15JDE,
    AUTHOR = {Yun, Seok-Bae},
     TITLE = {Classical solutions for the ellipsoidal {BGK} model with fixed
              collision frequency},
   JOURNAL = {J. Differential Equations},
  FJOURNAL = {Journal of Differential Equations},
    VOLUME = {259},
      YEAR = {2015},
    NUMBER = {11},
     PAGES = {6009--6037},
      ISSN = {0022-0396,1090-2732},
   MRCLASS = {35Q20 (35A09 35F25)},
  MRNUMBER = {3397316},
MRREVIEWER = {Cecil\ Pompiliu\ Gr\"unfeld},
       DOI = {10.1016/j.jde.2015.07.016},
       URL = {https://doi.org/10.1016/j.jde.2015.07.016},
}

@article {Yun15SIAM,
    AUTHOR = {Yun, Seok-Bae},
     TITLE = {Ellipsoidal {BGK} model near a global {M}axwellian},
   JOURNAL = {SIAM J. Math. Anal.},
  FJOURNAL = {SIAM Journal on Mathematical Analysis},
    VOLUME = {47},
      YEAR = {2015},
    NUMBER = {3},
     PAGES = {2324--2354},
      ISSN = {0036-1410,1095-7154},
   MRCLASS = {76P05 (35Q20 82C40)},
  MRNUMBER = {3357626},
MRREVIEWER = {Marzia\ Bisi},
       DOI = {10.1137/130932399},
       URL = {https://doi.org/10.1137/130932399},
}

@article {HY19,
    AUTHOR = {Hwang, Byung-Hoon and Yun, Seok-Bae},
     TITLE = {Ellipsoidal {BGK} model near a global {M}axwellian in the
              whole space},
   JOURNAL = {J. Math. Phys.},
  FJOURNAL = {Journal of Mathematical Physics},
    VOLUME = {60},
      YEAR = {2019},
    NUMBER = {7},
     PAGES = {071507, 28},
      ISSN = {0022-2488,1089-7658},
   MRCLASS = {35Q82 (35A09 35B40 35Q30)},
  MRNUMBER = {3981601},
       DOI = {10.1063/1.5017899},
       URL = {https://doi.org/10.1063/1.5017899},
}

@article {BKPY23,
    AUTHOR = {Bae, Gi-Chan and Klingenberg, Christian and Pirner, Marlies
              and Yun, Seok-Bae},
     TITLE = {B{GK} model for two-component gases near a global
              {M}axwellian},
   JOURNAL = {SIAM J. Math. Anal.},
  FJOURNAL = {SIAM Journal on Mathematical Analysis},
    VOLUME = {55},
      YEAR = {2023},
    NUMBER = {2},
     PAGES = {1007--1047},
      ISSN = {0036-1410,1095-7154},
   MRCLASS = {82C40 (35F16 35Q20 76P05)},
  MRNUMBER = {4579721},
MRREVIEWER = {Shuangqian\ Liu},
       DOI = {10.1137/22M1469535},
       URL = {https://doi.org/10.1137/22M1469535},
}

@article {BKLY26,
    AUTHOR = {Bae, Gi-Chan and Ko, Gyounghun and Lee, Donghyun and Yun,
              Seok-Bae},
     TITLE = {Large amplitude problem of {BGK} model: relaxation to
              quadratic nonlinearity},
   JOURNAL = {SIAM J. Math. Anal.},
  FJOURNAL = {SIAM Journal on Mathematical Analysis},
    VOLUME = {58},
      YEAR = {2026},
    NUMBER = {2},
     PAGES = {1530--1570},
      ISSN = {0036-1410,1095-7154},
   MRCLASS = {35Q20 (82C40 82D05)},
  MRNUMBER = {5057219},
       DOI = {10.1137/25M1774537},
       URL = {https://doi.org/10.1137/25M1774537},
}

@article {Per89,
    AUTHOR = {Perthame, B.},
     TITLE = {Global existence to the {BGK} model of {B}oltzmann equation},
   JOURNAL = {J. Differential Equations},
  FJOURNAL = {Journal of Differential Equations},
    VOLUME = {82},
      YEAR = {1989},
    NUMBER = {1},
     PAGES = {191--205},
      ISSN = {0022-0396,1090-2732},
   MRCLASS = {35Q99 (45K05 76P05 82B40)},
  MRNUMBER = {1023307},
       DOI = {10.1016/0022-0396(89)90173-3},
       URL = {https://doi.org/10.1016/0022-0396(89)90173-3},
}

@article {PP93,
    AUTHOR = {Perthame, B. and Pulvirenti, M.},
     TITLE = {Weighted {$L^\infty$} bounds and uniqueness for the
              {B}oltzmann {BGK} model},
   JOURNAL = {Arch. Rational Mech. Anal.},
  FJOURNAL = {Archive for Rational Mechanics and Analysis},
    VOLUME = {125},
      YEAR = {1993},
    NUMBER = {3},
     PAGES = {289--295},
      ISSN = {0003-9527},
   MRCLASS = {82C40 (76P05)},
  MRNUMBER = {1245074},
MRREVIEWER = {Carlo\ Cercignani},
       DOI = {10.1007/BF00383223},
       URL = {https://doi.org/10.1007/BF00383223},
}

@article {BY20,
    AUTHOR = {Bae, Gi-Chan and Yun, Seok-Bae},
     TITLE = {Quantum {BGK} model near a global {F}ermi-{D}irac
              distribution},
   JOURNAL = {SIAM J. Math. Anal.},
  FJOURNAL = {SIAM Journal on Mathematical Analysis},
    VOLUME = {52},
      YEAR = {2020},
    NUMBER = {3},
     PAGES = {2313--2352},
      ISSN = {0036-1410,1095-7154},
   MRCLASS = {35Q20 (76P05 82B40 82C40)},
  MRNUMBER = {4096124},
MRREVIEWER = {Luisa\ Arlotti},
       DOI = {10.1137/19M1270021},
       URL = {https://doi.org/10.1137/19M1270021},
}

@article {BY20S,
    AUTHOR = {Bae, Gi-Chan and Yun, Seok-Bae},
     TITLE = {Stationary quantum {BGK} model for bosons and fermions in a
              bounded interval},
   JOURNAL = {J. Stat. Phys.},
  FJOURNAL = {Journal of Statistical Physics},
    VOLUME = {178},
      YEAR = {2020},
    NUMBER = {4},
     PAGES = {845--868},
      ISSN = {0022-4715,1572-9613},
   MRCLASS = {35Q82 (82D05)},
  MRNUMBER = {4064205},
       DOI = {10.1007/s10955-019-02466-2},
       URL = {https://doi.org/10.1007/s10955-019-02466-2},
}

@article {Bra19,
    AUTHOR = {Braukhoff, Marcel},
     TITLE = {Semiconductor {B}oltzmann-{D}irac-{B}enney equation with a
              {BGK}-type collision operator: existence of solutions vs.
              ill-posedness},
   JOURNAL = {Kinet. Relat. Models},
  FJOURNAL = {Kinetic and Related Models},
    VOLUME = {12},
      YEAR = {2019},
    NUMBER = {2},
     PAGES = {445--482},
      ISSN = {1937-5093,1937-5077},
   MRCLASS = {82D37 (35Q20 82D10)},
  MRNUMBER = {3918275},
       DOI = {10.3934/krm.2019019},
       URL = {https://doi.org/10.3934/krm.2019019},
}

@article {Bra20,
    AUTHOR = {Braukhoff, Marcel},
     TITLE = {Global analytic solutions of the semiconductor
              {B}oltzmann-{D}irac-{B}enney equation with relaxation time
              approximation},
   JOURNAL = {Kinet. Relat. Models},
  FJOURNAL = {Kinetic and Related Models},
    VOLUME = {13},
      YEAR = {2020},
    NUMBER = {1},
     PAGES = {187--210},
      ISSN = {1937-5093,1937-5077},
   MRCLASS = {35Q20 (35F25 35Q83 82D37)},
  MRNUMBER = {4063920},
       DOI = {10.3934/krm.2020007},
       URL = {https://doi.org/10.3934/krm.2020007},
}

@article {BJY21,
    AUTHOR = {Bae, Gi-Chan and Jang, Jin Woo and Yun, Seok-Bae},
     TITLE = {The relativistic quantum {B}oltzmann equation near
              equilibrium},
   JOURNAL = {Arch. Ration. Mech. Anal.},
  FJOURNAL = {Archive for Rational Mechanics and Analysis},
    VOLUME = {240},
      YEAR = {2021},
    NUMBER = {3},
     PAGES = {1593--1644},
      ISSN = {0003-9527,1432-0673},
   MRCLASS = {35Q40 (82C40)},
  MRNUMBER = {4264953},
       DOI = {10.1007/s00205-021-01643-6},
       URL = {https://doi.org/10.1007/s00205-021-01643-6},
}

@article {OW22,
    AUTHOR = {Ouyang, Zhimeng and Wu, Lei},
     TITLE = {On the quantum {B}oltzmann equation near {M}axwellian and
              vacuum},
   JOURNAL = {J. Differential Equations},
  FJOURNAL = {Journal of Differential Equations},
    VOLUME = {316},
      YEAR = {2022},
     PAGES = {471--551},
      ISSN = {0022-0396,1090-2732},
   MRCLASS = {35Q82 (82C40)},
  MRNUMBER = {4377165},
       DOI = {10.1016/j.jde.2022.01.056},
       URL = {https://doi.org/10.1016/j.jde.2022.01.056},
}

@article {Dol94,
    AUTHOR = {Dolbeault, J.},
     TITLE = {Kinetic models and quantum effects: a modified {B}oltzmann
              equation for {F}ermi-{D}irac particles},
   JOURNAL = {Arch. Rational Mech. Anal.},
  FJOURNAL = {Archive for Rational Mechanics and Analysis},
    VOLUME = {127},
      YEAR = {1994},
    NUMBER = {2},
     PAGES = {101--131},
      ISSN = {0003-9527},
   MRCLASS = {82C40 (35Q99 76P05)},
  MRNUMBER = {1288807},
MRREVIEWER = {Carlo\ Cercignani},
       DOI = {10.1007/BF00377657},
       URL = {https://doi.org/10.1007/BF00377657},
}

@article {All10,
    AUTHOR = {Allemand, Thibaut},
     TITLE = {Existence and conservation laws for the
              {B}oltzmann-{F}ermi-{D}irac equation in a general domain},
   JOURNAL = {C. R. Math. Acad. Sci. Paris},
  FJOURNAL = {Comptes Rendus Math\'ematique. Acad\'emie des Sciences. Paris},
    VOLUME = {348},
      YEAR = {2010},
    NUMBER = {13-14},
     PAGES = {763--767},
      ISSN = {1631-073X,1778-3569},
   MRCLASS = {82C40 (35Q20 35Q40)},
  MRNUMBER = {2671157},
MRREVIEWER = {Laurent\ Desvillettes},
       DOI = {10.1016/j.crma.2010.06.015},
       URL = {https://doi.org/10.1016/j.crma.2010.06.015},
}

@article {Lu01,
    AUTHOR = {Lu, Xuguang},
     TITLE = {On spatially homogeneous solutions of a modified {B}oltzmann
              equation for {F}ermi-{D}irac particles},
   JOURNAL = {J. Statist. Phys.},
  FJOURNAL = {Journal of Statistical Physics},
    VOLUME = {105},
      YEAR = {2001},
    NUMBER = {1-2},
     PAGES = {353--388},
      ISSN = {0022-4715,1572-9613},
   MRCLASS = {82C40},
  MRNUMBER = {1861208},
MRREVIEWER = {Carlo\ Cercignani},
       DOI = {10.1023/A:1012282516668},
       URL = {https://doi.org/10.1023/A:1012282516668},
}

@article {LW03,
    AUTHOR = {Lu, Xuguang and Wennberg, Bernt},
     TITLE = {On stability and strong convergence for the spatially
              homogeneous {B}oltzmann equation for {F}ermi-{D}irac
              particles},
   JOURNAL = {Arch. Ration. Mech. Anal.},
  FJOURNAL = {Archive for Rational Mechanics and Analysis},
    VOLUME = {168},
      YEAR = {2003},
    NUMBER = {1},
     PAGES = {1--34},
      ISSN = {0003-9527,1432-0673},
   MRCLASS = {82C40 (82C10)},
  MRNUMBER = {2029003},
MRREVIEWER = {Carlo\ Cercignani},
       DOI = {10.1007/s00205-003-0247-8},
       URL = {https://doi.org/10.1007/s00205-003-0247-8},
}

@article {Lu08,
    AUTHOR = {Lu, Xuguang},
     TITLE = {On the {B}oltzmann equation for {F}ermi-{D}irac particles with
              very soft potentials: global existence of weak solutions},
   JOURNAL = {J. Differential Equations},
  FJOURNAL = {Journal of Differential Equations},
    VOLUME = {245},
      YEAR = {2008},
    NUMBER = {7},
     PAGES = {1705--1761},
      ISSN = {0022-0396,1090-2732},
   MRCLASS = {82C40 (35D05 35F25)},
  MRNUMBER = {2433484},
MRREVIEWER = {C\'edric\ Villani},
       DOI = {10.1016/j.jde.2008.06.028},
       URL = {https://doi.org/10.1016/j.jde.2008.06.028},
}

@article {JZ25,
    AUTHOR = {Jiang, Ning and Zhou, Kai},
     TITLE = {Global well-posedness of {B}oltzmann-{F}ermi-{D}irac equation
              for hard potential},
   JOURNAL = {Kinet. Relat. Models},
  FJOURNAL = {Kinetic and Related Models},
    VOLUME = {18},
      YEAR = {2025},
    NUMBER = {2},
     PAGES = {148--185},
      ISSN = {1937-5093,1937-5077},
   MRCLASS = {35Q20 (76P05 82B40 82C40)},
  MRNUMBER = {4874792},
MRREVIEWER = {Yuanjie\ Lei},
       DOI = {10.3934/krm.2024014},
       URL = {https://doi.org/10.3934/krm.2024014},
}

@article{AP25p,
  title={On the {B}oltzmann-{F}ermi-{D}irac equation for hard potential: global existence and uniqueness, {G}aussian lower bound, and moment estimates},
  author={An, Gayoung and Park, Sungbin},
  journal={arXiv:2511.02273},
}

@incollection {BL04,
    AUTHOR = {Bagland, V\'eronique and Lemou, Mohammed},
     TITLE = {Equilibrium states for the {L}andau-{F}ermi-{D}irac equation},
 BOOKTITLE = {Nonlocal elliptic and parabolic problems},
    SERIES = {Banach Center Publ.},
    VOLUME = {66},
     PAGES = {29--37},
 PUBLISHER = {Polish Acad. Sci. Inst. Math., Warsaw},
      YEAR = {2004},
   MRCLASS = {82C10 (35Q40 35R10 45K05)},
  MRNUMBER = {2143354},
MRREVIEWER = {Laurent\ Desvillettes},
       DOI = {10.4064/bc66-0-2},
       URL = {https://doi.org/10.4064/bc66-0-2},
}

@article {EMV05,
    AUTHOR = {Escobedo, Miguel and Mischler, St\'ephane and Valle, Manuel
              A.},
     TITLE = {Entropy maximisation problem for quantum relativistic
              particles},
   JOURNAL = {Bull. Soc. Math. France},
  FJOURNAL = {Bulletin de la Soci\'et\'e{} Math\'ematique de France},
    VOLUME = {133},
      YEAR = {2005},
    NUMBER = {1},
     PAGES = {87--120},
      ISSN = {0037-9484,2102-622X},
   MRCLASS = {82B40 (82C40)},
  MRNUMBER = {2145021},
MRREVIEWER = {C\'edric\ Villani},
       DOI = {10.24033/bsmf.2480},
       URL = {https://doi.org/10.24033/bsmf.2480},
}

@article {ABDL22,
    AUTHOR = {Alonso, R. and Bagland, V. and Desvillettes, L. and Lods, B.},
     TITLE = {About the {L}andau-{F}ermi-{D}irac equation with moderately
              soft potentials},
   JOURNAL = {Arch. Ration. Mech. Anal.},
  FJOURNAL = {Archive for Rational Mechanics and Analysis},
    VOLUME = {244},
      YEAR = {2022},
    NUMBER = {3},
     PAGES = {779--875},
      ISSN = {0003-9527,1432-0673},
   MRCLASS = {35Q82 (82C40)},
  MRNUMBER = {4419608},
       DOI = {10.1007/s00205-022-01779-z},
       URL = {https://doi.org/10.1007/s00205-022-01779-z},
}

@article {GGZ22,
    AUTHOR = {Golding, William and Gualdani, Maria Pia and Zamponi, Nicola},
     TITLE = {Existence of smooth solutions to the {L}andau-{F}ermi-{D}irac
              equation with {C}oulomb potential},
   JOURNAL = {Commun. Math. Sci.},
  FJOURNAL = {Communications in Mathematical Sciences},
    VOLUME = {20},
      YEAR = {2022},
    NUMBER = {8},
     PAGES = {2315--2365},
      ISSN = {1539-6746,1945-0796},
   MRCLASS = {35K59 (35K55 35P15 82C40 82D10)},
  MRNUMBER = {4521042},
       DOI = {10.4310/cms.2022.v20.n8.a7},
       URL = {https://doi.org/10.4310/cms.2022.v20.n8.a7},
}

@article{Sam24p,
  title={Global solutions to the {L}andau-{F}ermi-{D}irac equation},
  author={Sampaio, Paulo},
  journal={arXiv:2410.12681},
}

@article {Sam26,
    AUTHOR = {Sampaio, Paulo},
     TITLE = {On the semi-classical limit for the {L}andau-{F}ermi-{D}irac
              equation},
   JOURNAL = {Kinet. Relat. Models},
  FJOURNAL = {Kinetic and Related Models},
    VOLUME = {21},
      YEAR = {2026},
     PAGES = {104--139},
      ISSN = {1937-5093,1937-5077},
   MRCLASS = {35Q99 (82D10)},
  MRNUMBER = {5041646},
       DOI = {10.3934/krm.2026005},
       URL = {https://doi.org/10.3934/krm.2026005},
}

@article {GLPS88,
    AUTHOR = {Golse, Fran\c cois and Lions, Pierre-Louis and Perthame,
              Beno\^it and Sentis, R\'emi},
     TITLE = {Regularity of the moments of the solution of a transport
              equation},
   JOURNAL = {J. Funct. Anal.},
  FJOURNAL = {Journal of Functional Analysis},
    VOLUME = {76},
      YEAR = {1988},
    NUMBER = {1},
     PAGES = {110--125},
      ISSN = {0022-1236},
   MRCLASS = {35Q20 (35B99)},
  MRNUMBER = {923047},
       DOI = {10.1016/0022-1236(88)90051-1},
       URL = {https://doi.org/10.1016/0022-1236(88)90051-1},
}

@article {DLM91,
    AUTHOR = {DiPerna, R. J. and Lions, P.-L. and Meyer, Y.},
     TITLE = {{$L^p$} regularity of velocity averages},
   JOURNAL = {Ann. Inst. H. Poincar\'e{} C Anal. Non Lin\'eaire},
  FJOURNAL = {Annales de l'Institut Henri Poincar\'e{} C. Analyse Non
              Lin\'eaire},
    VOLUME = {8},
      YEAR = {1991},
    NUMBER = {3-4},
     PAGES = {271--287},
      ISSN = {0294-1449,1873-1430},
   MRCLASS = {35B65 (35Q99 82C70)},
  MRNUMBER = {1127927},
MRREVIEWER = {Benoit\ Perthame},
       DOI = {10.1016/S0294-1449(16)30264-5},
       URL = {https://doi.org/10.1016/S0294-1449(16)30264-5},
}

@article {GSR02,
    AUTHOR = {Golse, Fran\c cois and Saint-Raymond, Laure},
     TITLE = {Velocity averaging in {$L^1$} for the transport equation},
   JOURNAL = {C. R. Math. Acad. Sci. Paris},
  FJOURNAL = {Comptes Rendus Math\'ematique. Acad\'emie des Sciences. Paris},
    VOLUME = {334},
      YEAR = {2002},
    NUMBER = {7},
     PAGES = {557--562},
      ISSN = {1631-073X,1778-3569},
   MRCLASS = {35F20 (35Q35 82C70)},
  MRNUMBER = {1903763},
       DOI = {10.1016/S1631-073X(02)02302-6},
       URL = {https://doi.org/10.1016/S1631-073X(02)02302-6},
}

@article {KS25,
    AUTHOR = {Koo, Dowan and Song, Sihyun},
     TITLE = {Global mild solutions to a {BGK} model for barotropic gas
              dynamics},
   JOURNAL = {SIAM J. Math. Anal.},
  FJOURNAL = {SIAM Journal on Mathematical Analysis},
    VOLUME = {57},
      YEAR = {2025},
    NUMBER = {4},
     PAGES = {4137--4164},
      ISSN = {0036-1410,1095-7154},
   MRCLASS = {82C40 (35F25 76N15)},
  MRNUMBER = {4941924},
       DOI = {10.1137/24M1688047},
       URL = {https://doi.org/10.1137/24M1688047},
}

@article {DL89,
    AUTHOR = {DiPerna, R. J. and Lions, P.-L.},
     TITLE = {On the {C}auchy problem for {B}oltzmann equations: global
              existence and weak stability},
   JOURNAL = {Ann. of Math. (2)},
  FJOURNAL = {Annals of Mathematics. Second Series},
    VOLUME = {130},
      YEAR = {1989},
    NUMBER = {2},
     PAGES = {321--366},
      ISSN = {0003-486X,1939-8980},
   MRCLASS = {82A40 (35Q20 45K05 76P05)},
  MRNUMBER = {1014927},
MRREVIEWER = {Seiji\ Ukai},
       DOI = {10.2307/1971423},
       URL = {https://doi.org/10.2307/1971423},
}

@article {MP97,
    AUTHOR = {Mischler, S. and Perthame, B.},
     TITLE = {Boltzmann equation with infinite energy: renormalized
              solutions and distributional solutions for small initial data
              and initial data close to a {M}axwellian},
   JOURNAL = {SIAM J. Math. Anal.},
  FJOURNAL = {SIAM Journal on Mathematical Analysis},
    VOLUME = {28},
      YEAR = {1997},
    NUMBER = {5},
     PAGES = {1015--1027},
      ISSN = {0036-1410},
   MRCLASS = {35Q99 (35D05 76P05 82C70)},
  MRNUMBER = {1466666},
       DOI = {10.1137/S0036141096298102},
       URL = {https://doi.org/10.1137/S0036141096298102},
}

@article {Mis10,
    AUTHOR = {Mischler, St\'ephane},
     TITLE = {Kinetic equations with {M}axwell boundary conditions},
   JOURNAL = {Ann. Sci. \'Ec. Norm. Sup\'er. (4)},
  FJOURNAL = {Annales Scientifiques de l'\'Ecole Normale Sup\'erieure.
              Quatri\`eme S\'erie},
    VOLUME = {43},
      YEAR = {2010},
    NUMBER = {5},
     PAGES = {719--760},
      ISSN = {0012-9593,1873-2151},
   MRCLASS = {35F30 (35B35)},
  MRNUMBER = {2721875},
       DOI = {10.24033/asens.2132},
       URL = {https://doi.org/10.24033/asens.2132},
}

@article {Nou08,
    AUTHOR = {Nouri, Anne},
     TITLE = {An existence result for a quantum {BGK} model},
   JOURNAL = {Math. Comput. Modelling},
  FJOURNAL = {Mathematical and Computer Modelling},
    VOLUME = {47},
      YEAR = {2008},
    NUMBER = {3-4},
     PAGES = {515--529},
      ISSN = {0895-7177},
   MRCLASS = {82C40 (82C10)},
  MRNUMBER = {2378854},
       DOI = {10.1016/j.mcm.2007.05.002},
       URL = {https://doi.org/10.1016/j.mcm.2007.05.002},
}

@article {BKPY21,
    AUTHOR = {Bae, Gi-Chan and Klingenberg, Christian and Pirner, Marlies
              and Yun, Seok-Bae},
     TITLE = {B{GK} model of the multi-species {U}ehling-{U}hlenbeck
              equation},
   JOURNAL = {Kinet. Relat. Models},
  FJOURNAL = {Kinetic and Related Models},
    VOLUME = {14},
      YEAR = {2021},
    NUMBER = {1},
     PAGES = {25--44},
      ISSN = {1937-5093,1937-5077},
   MRCLASS = {82C40 (35Q20 76Y05 82C10)},
  MRNUMBER = {4206987},
MRREVIEWER = {Abdennebi\ Omrane},
       DOI = {10.3934/krm.2020047},
       URL = {https://doi.org/10.3934/krm.2020047},
}

@article{UU33,
  title={Transport phenomena in {E}instein-{B}ose and {F}ermi-{D}irac gases. {I}},
  author={Uehling, Edwin Albrecht and Uhlenbeck, GE},
  journal={Physical Review},
  volume={43},
  number={7},
  pages={552},
  year={1933},
  publisher={APS}
}

@article {HLP21,
    AUTHOR = {He, Ling-Bing and Lu, Xuguang and Pulvirenti, Mario},
     TITLE = {On semi-classical limit of spatially homogeneous quantum
              {B}oltzmann equation: weak convergence},
   JOURNAL = {Comm. Math. Phys.},
  FJOURNAL = {Communications in Mathematical Physics},
    VOLUME = {386},
      YEAR = {2021},
    NUMBER = {1},
     PAGES = {143--223},
      ISSN = {0010-3616,1432-0916},
   MRCLASS = {35Q20 (82C40 82D05)},
  MRNUMBER = {4287184},
       DOI = {10.1007/s00220-021-04029-7},
       URL = {https://doi.org/10.1007/s00220-021-04029-7},
}

@article {HLPZ24,
    AUTHOR = {He, Ling-Bing and Lu, Xuguang and Pulvirenti, Mario and Zhou,
              Yu-Long},
     TITLE = {On semi-classical limit of spatially homogeneous quantum
              {B}oltzmann equation: asymptotic expansion},
   JOURNAL = {Comm. Math. Phys.},
  FJOURNAL = {Communications in Mathematical Physics},
    VOLUME = {405},
      YEAR = {2024},
    NUMBER = {12},
     PAGES = {Paper No. 297, 51},
      ISSN = {0010-3616,1432-0916},
   MRCLASS = {35Q20 (35C20 82C40 82D05)},
  MRNUMBER = {4829560},
       DOI = {10.1007/s00220-024-05174-5},
       URL = {https://doi.org/10.1007/s00220-024-05174-5},
}

\end{document}